\newcommand{\mc}[1]{\mathcal{#1}}
\renewcommand{\vert}{\textup{\textsf{v}}}
\newcommand{\e}{\textup{\textsf{e}}}
\renewcommand{\d}{\textup{\textsf{d}}}
\theoremstyle{plain}
\newtheorem{thm}{Theorem}[section]
\newtheorem{lem}[thm]{Lemma}
\newtheorem{claim}{Claim}[thm]
\newtheorem{cor}[thm]{Corollary}
\newtheorem{conj}[thm]{Conjecture}
\noindent \emph{Proof.} {}{#1}{}}{\hfill
\theoremstyle{plain} 
\newcommand{\thistheoremname}{}
\newtheorem{genericthm}{\thistheoremname}
\theoremstyle{definition}
\newtheorem{definition}[thm]{Definition}
\title{Reducing Linear Hadwiger's Conjecture to Coloring Small Graphs}
\author{
Michelle Delcourt
\thanks{Department of Mathematics, Toronto Metropolitan University,
Toronto, Ontario M5B 2K3, Canada {\tt mdelcourt@torontomu.ca}. Research supported by NSERC under Discovery Grant No. 2019-04269.}
\and
Luke Postle
\thanks{Combinatorics and Optimization Department,
University of Waterloo, Waterloo, Ontario N2L 3G1, Canada {\tt lpostle@uwaterloo.ca}. Partially supported by NSERC
under Discovery Grant No. 2019-04304.}}
\date{\today}
\begin{document}

\maketitle

\begin{center}
	\emph{Dedicated to the memory of Robin Thomas}
\end{center}

\begin{abstract} 
	In 1943, Hadwiger conjectured that every graph with no $K_t$ minor is $(t-1)$-colorable for every $t\ge 1$. In the 1980s, Kostochka and Thomason independently proved that every graph with no $K_t$ minor has average degree $O(t\sqrt{\log t})$ and hence is $O(t\sqrt{\log t})$-colorable.  Recently, Norin, Song and the second author showed that every graph with no $K_t$ minor is $O(t(\log t)^{\beta})$-colorable for every $\beta > 1/4$, making the first improvement on the order of magnitude of the $O(t\sqrt{\log t})$ bound. The first main result of this paper is that every graph with no $K_t$ minor is $O(t\log\log t)$-colorable.
	
This is actually a corollary of our main technical result that the chromatic number of a $K_t$-minor-free graph is bounded by $O(t(1+f(G,t)))$ where $f(G,t)$ is the maximum of $\frac{\chi(H)}{a}$ over all $a\ge \frac{t}{\sqrt{\log t}}$ and $K_a$-minor-free subgraphs $H$ of $G$ that are small (i.e. $O(a\log^4 a)$ vertices). This has a number of interesting corollaries. First as mentioned, using the current best-known bounds on coloring small $K_t$-minor-free graphs, we show that $K_t$-minor-free graphs are $O(t\log\log t)$-colorable. Second, it shows that proving Linear Hadwiger's Conjecture (that $K_t$-minor-free graphs are $O(t)$-colorable) reduces to proving it for small graphs. Third, we prove that $K_t$-minor-free graphs with clique number at most $\sqrt{\log t}/ (\log \log t)^2$ are $O(t)$-colorable. This implies our final corollary that Linear Hadwiger's Conjecture holds for $K_r$-free graphs for every fixed $r$; more generally, we show there exists $C\ge 1$ such that for every $r\ge 1$, there exists $t_r$ such that for all $t\ge t_r$, every $K_r$-free $K_t$-minor-free graph is $Ct$-colorable.	
	
One key to proving the main theorem is building the minor in two new ways according to whether the chromatic number `separates' in the graph: sequentially if the graph is  the chromatic-inseparable and recursively if the graph is chromatic-separable. The other key is a new standalone result that every $K_t$-minor-free graph of average degree $d=\Omega(t)$ has a subgraph on $O(t \log^3 t)$ vertices with average degree $\Omega(d)$. 
\end{abstract}

\section{Introduction}

All graphs in this paper are finite and simple. Given graphs $H$ and $G$, we say that $G$ has an \emph{$H$ minor} if a graph isomorphic to $H$ can be obtained from a subgraph of $G$ by contracting edges. We denote the complete graph on $t$ vertices by $K_t$. For the benefit of the reader, we collect other terms and notation used throughout the paper in Section~\ref{sub:not}.

In 1943 Hadwiger made the following famous conjecture.

\begin{conj}[Hadwiger's Conjecture~\cite{Had43}]\label{Hadwiger} For every integer $t \geq 1$, every graph with no $K_{t}$ minor is $(t-1)$-colorable. 
\end{conj}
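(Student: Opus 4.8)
The statement to be proved is Hadwiger's Conjecture itself, which remains open for every $t\ge 7$, so any honest proof plan here is necessarily speculative; what follows is the line of attack I would pursue, together with the point at which I expect it to break down. The plan is to split into two regimes according to the size of $t$. For small $t$ (concretely $t\le 6$) I would lean entirely on classical structural reductions: $t\le 4$ is elementary (no $K_4$ minor forces series--parallel structure, hence $3$-colorability), $t=5$ is Wagner's reformulation of the Four Color Theorem via the decomposition of $K_5$-minor-free graphs into planar pieces and $V_8$ along clique-sums over cliques of size at most $3$, and $t=6$ is the Robertson--Seymour--Thomas theorem that a minimal counterexample would be an apex graph, again reducing to the Four Color Theorem. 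The substance of the conjecture therefore lives in the range $t\ge 7$, where no such structural handle is presently available, and this is where the real work would be.

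For large $t$ I would first target the \emph{linear} form --- that $K_t$-minor-free graphs are $O(t)$-colorable --- and only afterward try to drive the constant down toward $1$. For the linear form, the strategy suggested by the abstract is to invoke the paper's main technical result: since $\chi(G)=O\!\left(t\bigl(1+f(G,t)\bigr)\right)$, where $f(G,t)$ is the maximum of $\chi(H)/a$ over small $K_a$-minor-free subgraphs $H$ (on $O(a\log^4 a)$ vertices) with $a\ge t/\sqrt{\log t}$, it suffices to prove $\chi(H)=O(a)$ for all such small $H$. I would attack the small-graph case by exploiting that $|V(H)|$ is only polylogarithmically larger than $a$, so one can afford arguments that would be hopelessly lossy in general --- repeatedly extracting a bounded-size dense subgraph, coloring it, and recursing on a graph whose vertex count has dropped substantially. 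I would further reduce this using the clique-number corollary (that $K_t$-minor-free graphs with clique number $o(\sqrt{\log t}/(\log\log t)^2)$ are already $O(t)$-colorable), which leaves only small $K_t$-minor-free graphs containing a moderately large clique, where one could try to contract along a clique-sum decomposition through that clique.

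The main obstacle --- and the reason the conjecture is still open --- is passing from $O(t)$ to the exact bound $t-1$. Every density-based approach is intrinsically lossy: the Kostochka--Thomason average-degree bound $O(t\sqrt{\log t})$ is tight up to the constant, so no argument that colors greedily off a sparse subgraph can beat $\Theta(t\sqrt{\log t})$, and even the two-phase minor-building of this paper (sequential in the chromatic-inseparable case, recursive in the separable case) together with the dense-subgraph-on-$O(t\log^3 t)$-vertices lemma only reaches $O(t\log\log t)$. Removing the residual $\log\log t$ factor, and then the constant, seems to demand a genuinely new structural idea: some way to use that a minimal counterexample is vertex-critical (minimum degree at least $t-1$, highly connected, no useful small separators) beyond what counting edges can extract. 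I expect essentially all of the difficulty to concentrate in that final step, and I do not have a plan that clears it.
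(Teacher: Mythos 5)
You have correctly recognized that the statement is Hadwiger's Conjecture itself, which the paper states only as a conjecture and does not prove; there is no proof in the paper to compare against, so declining to supply one is the right call. Your summary of the state of the art is accurate and consistent with the paper's own framing: the cases $t\le 6$ follow from classical structure theorems (ultimately the Four Color Theorem for $t=5,6$), the paper's contribution is the reduction of Linear Hadwiger's Conjecture to small graphs together with the $O(t\log\log t)$ bound, and the remaining bottleneck is precisely coloring $K_a$-minor-free graphs on $O(a\log^4 a)$ vertices with $O(a)$ colors, for which the best known bound is still $O(a\log\log a)$ via the Duchet--Meyniel Hall-ratio argument (Lemma~\ref{lem:small} and Theorem~\ref{t:DucMey}). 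Your identified obstacle — that density-based methods cannot beat $\Theta(t\sqrt{\log t})$ greedily and that even the paper's sequential/recursive minor-building loses a $\log\log t$ factor — is exactly where the paper itself locates the difficulty.
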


Hadwiger's Conjecture is widely considered among the most important problems in graph theory and has motivated numerous developments in graph coloring and graph minor theory. For an overview of major progress on Hadwiger's Conjecture, we refer the reader to~\cite{NPS19}, and to the recent survey by Seymour~\cite{Sey16Survey} for further background.

The following is a natural weakening of Hadwiger's Conjecture.

\begin{conj}[Linear Hadwiger's Conjecture~\cite{ReeSey98,Kaw07, KawMoh06}]\label{c:LinHadwiger} There exists a constant $C>0$ such that for every integer $t \geq 1$, every graph with no $K_{t}$ minor is $Ct$-colorable. 
\end{conj}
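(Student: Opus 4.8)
The plan is to treat the conjecture as a reduction target for the machinery announced in the abstract and then close the remaining gap by a bootstrapping argument on bounded-order graphs. Concretely: by the main technical result, every $K_t$-minor-free $G$ satisfies $\chi(G)=O(t(1+f(G,t)))$, where $f(G,t)$ is governed by the quantities $\chi(H)/a$ over $K_a$-minor-free subgraphs $H$ with $a\ge t/\sqrt{\log t}$ and $|V(H)|=O(a\log^4 a)$. Hence it suffices to exhibit an absolute constant $C_0$ with $\chi(H)\le C_0 a$ for every $K_a$-minor-free $H$ on at most $O(a\log^4 a)$ vertices. The first step is to set this up as an induction: assume $\chi(H)\le C_0 a$ for all $K_a$-minor-free $H$ with $|V(H)|\le n$, and attempt to push the bound to a strictly larger threshold by re-applying the reduction at a scale chosen so that the ``small'' subgraphs it produces already lie below $n$.

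The obstacle — and it is the real one — is that the reduction shrinks $|V(H)|$ only to $O(a\log^4 a)$ with $a$ still of order $t$, so it does not by itself decrease the ratio $|V(H)|/t$, and a naive iteration stalls. To make headway one wants either a version of the reduction in which $|V(H)|/t$ strictly drops, or a mechanism that forces the clique number of the produced subgraphs down, since the clique-number corollary already handles $\omega(H)\le \sqrt{\log a}/(\log\log a)^2$ with an $O(a)$ bound. This points to a Ramsey-type trichotomy on a small $K_a$-minor-free $H$: (i) $\omega(H)\ge a/2$, in which case only a constant-factor boost is needed to find a $K_a$ minor, and one hopes to harvest it from the remaining $O(a\log^4 a)$ vertices by a Kostochka--Thomason-style density estimate on $H$ minus the clique; (ii) $\omega(H)$ below the corollary's threshold, where we are done; and (iii) the intermediate regime $\sqrt{\log a}\lesssim \omega(H)\lesssim a$, which is exactly where the current technology (Kostochka--Thomason and the refinements of Norin--Song--Postle and of this paper) sheds the relevant logarithmic factors.

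A second ingredient for the intermediate regime is the standalone density result quoted in the abstract: a $K_t$-minor-free graph of average degree $d=\Omega(t)$ has a subgraph on $O(t\log^3 t)$ vertices with average degree $\Omega(d)$. Iterating this passes to an ever-denser bounded-order core, after which one would argue a structural dichotomy: a graph that is simultaneously $K_a$-minor-free, of average degree $\Omega(a)$, and on $O(a\,\log^{O(1)}a)$ vertices must either already contain a $K_{a'}$ minor with $a'=(1+\Omega(1))a$ (a contradiction), or be so close to a disjoint-ish union of near-cliques that it can be colored greedily within $O(a)$ colors. Making this last dichotomy quantitatively tight enough to beat the $\sqrt{\log t}$ factor is the crux; indeed, carrying out precisely this step for bounded-order graphs is what would resolve Linear Hadwiger's Conjecture, and the role of the present reduction is to isolate it as a statement purely about graphs of polylogarithmically bounded order.
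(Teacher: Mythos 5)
The statement you are addressing is a \emph{conjecture}, not a theorem of the paper: the paper never proves Linear Hadwiger's Conjecture. What it proves is the technical bound $\chi(G)\le C\cdot t\cdot(1+f(G,t))$ (Theorem~\ref{t:tech}), from which it deduces only (a) the reduction of Corollary~\ref{cor:ReduceToSmall} (Linear Hadwiger holds \emph{if} every $K_t$-minor-free graph on $O(t\log^4 t)$ vertices is $O(t)$-colorable), (b) the unconditional $O(t\log\log t)$ bound of Theorem~\ref{thm:ordinaryHadwiger3}, and (c) linear bounds under clique-number restrictions (Corollaries~\ref{cor:HadwigerSubLogClique}--\ref{cor:ordinaryHadwigerKrFree2}). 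Your proposal correctly identifies that, after Theorem~\ref{t:tech}, everything hinges on showing $\chi(H)\le C_0 a$ for $K_a$-minor-free $H$ with $\vert(H)=O(a\log^4 a)$, but it does not supply an argument for that step, and you say so yourself (``the crux''). So this is not a proof of the statement; it is a plan whose decisive step is exactly the open problem the paper isolates.

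Beyond that structural point, several of the intermediate steps you lean on would fail as stated. Iterating Theorem~\ref{t:SmallConn2} does not produce ``an ever-denser bounded-order core'': that theorem only returns a subgraph whose density is a constant fraction of the original (and the density can never exceed the Kostochka--Thomason ceiling $O(a\sqrt{\log a})$ anyway), so repetition loses a constant factor each time rather than gaining. In case (i) of your trichotomy, $\omega(H)\ge a/2$ gives no contradiction and no coloring: $H$ minus a large clique can be very sparse, so there is nothing to ``harvest'' by a density estimate, and the best bound available for $\chi(H)$ in this regime is still the $O(a\log\log a)$ one coming from Lemma~\ref{lem:small} and Theorem~\ref{t:DucMey}. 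Finally, the dichotomy you invoke for the intermediate regime --- a small, dense, $K_a$-minor-free graph either contains a $K_{(1+\Omega(1))a}$ minor or is close to a union of near-cliques colorable with $O(a)$ colors --- is not a known result and is essentially equivalent to the unsolved bottleneck: known extremal constructions (e.g.\ quasirandom graphs of density $\Theta(a\sqrt{\log a})$ on polylogarithmically many times $a$ vertices) are $K_a$-minor-free, far from unions of near-cliques, and are handled today only through Hall-ratio arguments that lose a $\log\log$ factor. In short, the gap is not a missing computation but the absence of any mechanism to beat $O(a\log\log a)$ for small graphs of unrestricted clique number; with current techniques your outline terminates at exactly the same place the paper does, namely Theorem~\ref{thm:ordinaryHadwiger3} and Corollary~\ref{cor:ReduceToSmall}.
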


For many decades, the best general bound on the number of colors needed to properly color every graph with no $K_t$ minor had been $O(t\sqrt{\log{t}})$, a result obtained independently by Kostochka~\cite{Kostochka82,Kostochka84} and Thomason~\cite{Thomason84} in the 1980s. The results of \cite{Kostochka82,Kostochka84,Thomason84} bound the ``degeneracy" of graphs with no $K_t$ minor. Recall that a graph $G$ is \emph{$d$-degenerate} if every non-empty subgraph of $G$ contains a vertex of degree at most $d$. A standard inductive argument shows that every $d$-degenerate graph is $(d+1)$-colorable. Thus the following bound on the degeneracy of graphs with no $K_t$ minor gives a corresponding bound on their chromatic number.

\begin{thm}[\cite{Kostochka82,Kostochka84,Thomason84}]\label{t:KT} Every graph with no $K_t$ minor is $O(t\sqrt{\log{t}})$-degenerate.
\end{thm}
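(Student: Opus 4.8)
The plan is to reduce the degeneracy bound to the statement that large minimum degree forces a clique minor, to prove the latter by examining an extremal configuration, and to isolate the one genuinely hard point as a quantitative estimate on how quickly density is lost under contraction.

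\emph{Reduction.} Every subgraph of a $K_t$-minor-free graph is again $K_t$-minor-free, so it suffices to bound the average degree: if $|E(H)|\le Ct\sqrt{\log t}\,|V(H)|$ for every $K_t$-minor-free $H$, then every subgraph of a $K_t$-minor-free graph contains a vertex of degree below $2Ct\sqrt{\log t}$, so such graphs are $O(t\sqrt{\log t})$-degenerate. Passing from average degree to a subgraph of minimum degree at least half as large, it is enough to prove: there is a constant $C$ such that every graph of minimum degree at least $Ct\sqrt{\log t}$ has a $K_t$ minor. Equivalently, writing $c(t):=\sup\{|E(H)|/|V(H)| : H\text{ has no }K_t\text{ minor}\}$ (finite by Mader's cruder bound), I want $c(t)=O(t\sqrt{\log t})$, which I would prove by induction on $t$, small $t$ being absorbed into $C$.

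\emph{Extremal configuration.} Fix $t$, put $\gamma:=c(t)$, and pick a $K_t$-minor-free graph $H$ with $|E(H)|$ as close as desired to $\gamma|V(H)|$ and, among minors of such graphs, with $|V(H)|$ minimum. A one-line computation shows that deleting a vertex of degree at most $\gamma$, and likewise contracting an edge whose endpoints have at most $\gamma-1$ common neighbours, each yields a $K_t$-minor-free graph satisfying essentially the same edge inequality on fewer vertices; by minimality, $\delta(H)>\gamma$ and---the key point---\emph{every edge of $H$ lies in more than $\gamma-1$ triangles}. Hence for every vertex $v$, the graph $H[N(v)]$ has minimum degree exceeding $\gamma-1$ and has no $K_{t-1}$ minor (adjoining $v$ as a branch set would otherwise give a $K_t$ minor of $H$).

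\emph{The hard step.} Converting this local density into a $K_t$ minor of $H$ is where I expect the real obstacle. Combining ``$H[N(v)]$ has average degree $>\gamma-1$'' with the inductive bound ``$H[N(v)]$ has average degree at most $2c(t-1)$'' only gives $\gamma\le 2c(t-1)+O(1)$, hence the worthless $c(t)=2^{O(t)}$: this recursion wastes the factor-two gap between minimum and average degree at every level. The Kostochka--Thomason idea is not to recurse on $t$ at all, but to work inside the single graph $H$: one iterates the contraction estimate above for \emph{small connected subsets}, not just single edges, tracking carefully the rate at which the number of vertices drops against the rate at which edges are destroyed, so as to produce inside $H$ a complete minor on $\Omega(\gamma/\sqrt{\log\gamma})$ vertices; since $H$ has no $K_t$ minor this forces $\gamma/\sqrt{\log\gamma}=O(t)$, i.e.\ $\gamma=O(t\sqrt{\log t})$. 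The entire content of the theorem is in the bookkeeping of this last step: one must show that the contractions needed to shrink $H$ to $O(\gamma/\sqrt{\log\gamma})$ vertices destroy, in aggregate, only a bounded fraction of the edges, so that the shrunken graph is essentially complete rather than merely of positive density---and it is exactly in making this aggregate loss a constant factor, rather than the larger overhead incurred by cruder counting (which yields only Mader's $O(t\log t)$), that the square root of $\log t$ is won.
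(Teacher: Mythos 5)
This statement is one the paper does not prove: Theorem~\ref{t:KT} is quoted from Kostochka and Thomason, and the paper later invokes an explicit version (Theorem~\ref{t:density}) likewise without proof. So the only question is whether your proposal stands on its own as a proof, and it does not. The reduction to average degree, the choice of a vertex-minimal (near-)extremal $K_t$-minor-free graph $H$ with $\e(H)$ close to $c(t)\vert(H)$, the deduction that $\delta(H)>\gamma$ and that every edge lies in more than $\gamma-1$ triangles, and the observation that the naive recursion on $t$ only yields $c(t)=2^{O(t)}$ are all correct and are exactly the classical setup. But the step you yourself label ``the hard step'' --- producing a complete minor on $\Omega(\gamma/\sqrt{\log\gamma})$ vertices inside $H$ --- is described, not proved, and it is the entire content of the theorem. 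Nothing in your write-up explains where $\sqrt{\log\gamma}$ (as opposed to $\log\gamma$, which cruder counting gives) comes from.

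Moreover, the one quantitative heuristic you do offer for that step is not the right invariant. Retaining ``a bounded fraction of the edges'' under contraction does not make the contracted graph essentially complete: after contracting to $m$ parts, what matters is that all $\binom{m}{2}$ \emph{pairs of parts} are adjacent (or all but a vanishing fraction), which is a condition on distinct adjacencies, not on the aggregate edge count, since many surviving edges can be parallel between the same pair of parts. The actual proofs control this differently: Thomason analyzes a random partition into small connected parts and shows every pair is adjacent with high probability (the $\sqrt{\log}$ arising from balancing $e^{-\Theta(\gamma s^2/n)}$ against a union bound over pairs), while Kostochka extracts a small dense subgraph when such a partition fails. Your sketch correctly locates the difficulty but does not resolve it, so as a proof it has a genuine gap at its central step.
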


Kostochka~\cite{Kostochka82,Kostochka84} and de la Vega~\cite{Vega83} have shown that there exist graphs with no $K_t$ minor and minimum degree $\Omega(t\sqrt{\log{t}})$. Thus the bound in \cref{t:KT} is tight. Until very recently $O(t\sqrt{\log{t}})$ remained the best general bound for the chromatic number of graphs with no $K_t$ minor when Norin, Song and the second author~\cite{NPS19} improved this with the following theorem.

\begin{thm}[\cite{NPS19}]\label{t:ordinaryHadwiger}
For every $\beta > \frac 1 4$, every graph with no $K_t$ minor is $O(t (\log t)^{\beta})$-colorable.
\end{thm}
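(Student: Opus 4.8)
The plan is to argue by contradiction, passing to a vertex-critical minimal counterexample and then amplifying its density until a $K_t$ minor is forced. Fix $\beta>\tfrac14$ and a large constant $C=C(\beta)$, and suppose $G$ is a vertex-minimal graph with no $K_t$ minor and $\chi(G)>Ct(\log t)^{\beta}$. By minimality $G$ is vertex-critical, so $\delta(G)\ge\chi(G)-1\ge Ct(\log t)^{\beta}-1$; in particular $G$ has average degree $d=\Omega\bigl(t(\log t)^{\beta}\bigr)$ while containing no $K_t$ minor. On the other hand \cref{t:KT} supplies a vertex of degree $O(t\sqrt{\log t})$, so $d=O(t\sqrt{\log t})$, and the entire task is to close the gap between the exponents $\beta$ and $\tfrac12$.

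The first step is a \emph{localization} lemma: every $K_t$-minor-free graph of average degree $d=\Omega(t)$ contains a subgraph $H$ on only $|V(H)|\le t\cdot(\log t)^{O(1)}$ vertices whose average degree is still $\Omega(d)$. Morally this holds because a $K_t$-minor-free graph of very large average degree looks locally like the Kostochka--de la Vega extremal construction, which already lives on $O(t\sqrt{\log t})$ vertices. To prove it one repeatedly replaces the current graph by a smaller subgraph on which the average degree does not drop below $d/2$ --- for instance a breadth-first ball of carefully chosen radius around a carefully chosen vertex, using that if every such ball were much sparser than the whole graph then the whole graph would be sparse --- and then bounds the order of the resulting minimal witness by exploiting that it is itself dense and $K_t$-minor-free.

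The second step builds a $K_t$ minor inside the small dense graph $H$. Pass to a subgraph of $H$ that is a Koml\'os--Szemer\'edi sublinear expander of average degree $\Omega(d)$ (every graph has such a subgraph of average degree at least half of its own), and then run the standard expansion argument: greedily grow pairwise-disjoint connected branch sets, each of order $(\log t)^{O(1)}$, using expansion both to keep the branch sets small and to route a path between every pair of them through the untouched part of the graph. Contracting the branch sets yields a $K_s$ minor with $s=\Omega\bigl(\sqrt{|V(H)|\,d}\,/(\log t)^{O(1)}\bigr)$. Substituting $|V(H)|\le t(\log t)^{O(1)}$ and $d=\Omega(t(\log t)^{\beta})$ gives $s\ge t$ once $C$ is large enough, \emph{provided the exponent $\beta$ beats the total polylogarithmic loss}; optimizing the parameters in the localization lemma against those in the expander minor estimate is exactly what produces the threshold $\tfrac14$ (the ``$1/4$'' arising, roughly, as $\tfrac12\cdot\tfrac12$ from the square root in the expander minor bound together with the square-root relationship between density and order in the extremal examples). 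A $K_t$ minor contradicts the choice of $G$, proving the theorem.

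The main obstacle is the localization lemma: compressing to a subgraph of near-linear order \emph{without losing more than a polylogarithmic factor in the average degree} is the delicate point, and obtaining a clean, nearly lossless form of it --- a subgraph of order $O(t\log^{O(1)}t)$ with the full average degree $\Omega(d)$ --- is precisely the ``new standalone result'' highlighted in the abstract; a lossier version already suffices here for the $(\log t)^{1/4+\epsilon}$ bound, but sharpening it is what later allows one to reach $O(t\log\log t)$. A secondary technical point is the bookkeeping in the expander minor construction --- keeping all $s$ branch sets of order only $(\log t)^{O(1)}$ while still guaranteeing a connecting path between each of the $\binom{s}{2}$ pairs --- but this follows the now-standard Koml\'os--Szemer\'edi template.
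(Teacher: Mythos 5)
There is a genuine gap, and it is structural rather than a matter of optimizing exponents. Your argument reduces to the claim that a $K_t$-minor-free graph of minimum degree $Ct(\log t)^{\beta}$ yields a contradiction by localizing to one small dense subgraph $H$ and building the entire $K_t$ minor inside $H$ via expander techniques. This cannot work for any $\beta<\tfrac12$: the Kostochka/de la Vega examples (essentially random graphs on $\Theta(t\sqrt{\log t})$ vertices) are $K_t$-minor-free with minimum degree $\Omega(t\sqrt{\log t})$, so the degree condition alone forces nothing, and the small dense subgraph your localization lemma produces could literally be such an extremal example. Your expander step does not rescue this: passing to a Koml\'os--Szemer\'edi expander subgraph preserves the average degree $d$ but not the lower bound on the number of vertices, so the expander you land on may have only $O(d)$ vertices, and the bound $s=\Omega(\sqrt{nd/\log n})$ then gives only $s=O(d/\sqrt{\log d})=O(t(\log t)^{\beta-1/2})\ll t$. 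In short, any proof that uses the chromatic-number hypothesis only to extract minimum degree is doomed, because degeneracy $\Theta(t\sqrt{\log t})$ is the true extremal threshold (Theorem~\ref{t:KT} is tight).

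The actual proof of this theorem in \cite{NPS19}, as outlined in Section~\ref{sec:outline}, uses the chromatic number in an essentially different way: one first shows that \emph{small} $K_t$-minor-free graphs (on $t(\log t)^{O(1)}$ vertices) are $O(t\log\log t)$-colorable, so that a graph of huge chromatic number cannot be covered by few small dense pieces; iterating, one extracts \emph{many vertex-disjoint} small, dense, highly-connected subgraphs, builds a partial complete minor in each, and then uses connectivity/linkage tools to join these partial minors into a single $K_t$ minor. No single piece contains a $K_t$ minor --- the minor only exists across the union --- and the exponent $\tfrac14$ arises from balancing the number of pieces against the size of the minor buildable in each, not from a localization-versus-expansion trade-off inside one subgraph. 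Your proposal is missing both the coloring step for small graphs and the many-pieces-plus-linkage architecture, and the single-subgraph shortcut it substitutes is refuted by the extremal examples.
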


The first main result of this paper is the following improvement to Theorem~\ref{t:ordinaryHadwiger}. 

\begin{thm}\label{thm:ordinaryHadwiger3}
Every graph with no $K_t$ minor is $O(t \log \log t)$-colorable. 
\end{thm}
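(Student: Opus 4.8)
The plan is not to attack \Cref{thm:ordinaryHadwiger3} head-on through degeneracy -- that is hopeless, since there are $K_t$-minor-free graphs of minimum degree $\Omega(t\sqrt{\log t})$~\cite{Kostochka84,Vega83} -- but to prove a stronger \emph{local} statement and then combine it with the best available colorings of \emph{small} minor-free graphs. Precisely, I would establish that every $K_t$-minor-free graph $G$ satisfies $\chi(G) = O(t(1+f(G,t)))$, where $f(G,t)$ is the supremum of $\chi(H)/a$ over all $a \ge t/\sqrt{\log t}$ and all $K_a$-minor-free subgraphs $H \subseteq G$ with $|V(H)| = O(a\log^4 a)$. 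Granting this, \Cref{thm:ordinaryHadwiger3} follows once one knows $f(G,t) = O(\log\log t)$ -- equivalently, that $K_a$-minor-free graphs on $O(a\log^4 a)$ vertices are $O(a\log\log a)$-colorable -- since then $\chi(G) = O(t(1+\log\log t)) = O(t\log\log t)$.

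For the local statement I would branch on whether the chromatic number ``separates''. Call $G$ \emph{chromatic-separable} if $V(G)$ splits into two parts on each of which the chromatic number is still a constant fraction of $\chi(G)$, in a way that lets clique minors found recursively on the two parts be amalgamated; call $G$ \emph{chromatic-inseparable} otherwise. In the separable case I would recurse on the two parts and combine the resulting clique minors, by a Menger/connectivity routing argument, into a single $K_t$ minor, so that a minimal counterexample is inseparable. In the inseparable case I would build the $K_t$ minor \emph{sequentially}: repeatedly peel off a small connected dense chunk to serve as one branch set and delete it, using inseparability to guarantee that the remainder still has large chromatic number, hence a subgraph of large average degree providing the next chunk, and continue for $t$ rounds. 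The role of the ``small'' qualifier in $f$ is that, via the new standalone fact that a $K_t$-minor-free graph of average degree $d=\Omega(t)$ contains a subgraph on $O(t\log^3 t)$ vertices of average degree $\Omega(d)$, every chunk can be located inside a graph on only $O(t\cdot\mathrm{polylog}\, t)$ vertices, so its chromatic cost is charged to $f(G,t)$ rather than to $\chi(G)$ directly.

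It then remains to see that $f(G,t) = O(\log\log t)$, i.e.\ to color every $K_a$-minor-free graph on $O(a\log^4 a)$ vertices with $O(a\log\log a)$ colors. I would supply this from the known bounds on coloring minor-free graphs with few vertices -- the ``current best-known bounds on small $K_t$-minor-free graphs'' mentioned in the introduction. The point is that once the vertex count is $O(a\cdot\mathrm{polylog}\, a)$ one is far from the extremal configurations behind \Cref{t:KT} and can beat $O(a\sqrt{\log a})$ substantially; plugging any $O(a\log\log a)$-type estimate into the local inequality then closes the argument. (One may also try to read the needed small-graph bound off the local inequality itself, iterating it on its own always-small witnesses with \Cref{t:KT} as a base case and a geometric series producing the $\log\log$ factor.)

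The crux is the local statement, and within it the chromatic-inseparable case: one must simultaneously ensure that the branch set peeled off at each step is cheap in chromatic number, that the leftover graph retains enough chromatic number -- hence average degree -- to let the peeling continue through all $t$ rounds, and that the collected branch sets wind up pairwise adjacent after contraction so that the minor really is $K_t$, all without ever leaving the world of $O(t\cdot\mathrm{polylog}\, t)$-vertex subgraphs. Proving the density-concentration lemma with parameters strong enough ($O(t\log^3 t)$ vertices, average degree $\Omega(d)$) to make this bookkeeping work out is the main enabling technical obstacle, and I expect it to demand its own dedicated argument.
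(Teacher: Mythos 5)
Your proposal is correct and follows essentially the same route as the paper: reduce to the local statement (Theorem~\ref{t:tech}) via the chromatic-separable/inseparable dichotomy and the small-dense-subgraph lemma, then bound $f(G,t)=O(\log\log t)$ using known colorings of small minor-free graphs (the paper does this concretely via the Duchet--Meyniel Hall-ratio bound, Theorem~\ref{t:DucMey}, combined with Lemma~\ref{lem:small}, which yields $\chi(H)\le O(a\log\log a)$ for $K_a$-minor-free $H$ on $O(a\log^4 a)$ vertices).
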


Theorem~\ref{thm:ordinaryHadwiger3} is in fact a corollary of our more technical main result, Theorem~\ref{t:tech}, which is as follows. We note that Theorem~\ref{t:tech} does not rely on Theorem~\ref{t:ordinaryHadwiger} or the results in~\cite{NPS19} and hence the proof presented here is independent of the proof in that paper.

\begin{restatable}{thm}{Tech}\label{t:tech} There exists an integer $C=C_{\ref{t:tech}} \ge 1$ such that the following holds: Let $t\ge 3$ be an integer. Let $G$ be a graph and let 
$$f_{\ref{t:tech}}(G,t) := \max_{H\subseteq G} \left\{ \frac{\chi(H)}{a}:~t\ge a \ge \frac{t}{\sqrt{\log t}},~\vert(H)\le C a \log^4 a,~H \text{ is $K_a$-minor-free }\right\}.$$ 
If $G$ has no $K_t$ minor, then
$$\chi(G) \le C\cdot t \cdot (1+f_{\ref{t:tech}}(G,t)).$$ 
\end{restatable}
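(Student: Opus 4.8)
Write $f := f_{\ref{t:tech}}(G,t)$. The plan is to prove, by induction on $|V(G)|$, that every $K_t$-minor-free graph $G$ satisfies $\chi(G)\le Ct(1+f)$ for a large absolute constant $C$ to be fixed. Take a minimum counterexample; passing to a vertex-critical subgraph does not increase $f$ or $|V|$, so we may assume $G$ is $k$-critical with $k:=\chi(G)$, whence $\delta(G)\ge k-1$, and every proper subgraph obeys the claimed bound by induction. We may assume $k\ge Ct$ (otherwise $\chi(G)=k<Ct\le Ct(1+f)$ already), so $\delta(G)=\Omega(t)$. If $f\ge\gamma\sqrt{\log t}$ for a suitable constant $\gamma$, then $\delta(G)\ge k-1> Ctf-1\ge C\gamma\,t\sqrt{\log t}-1$, and choosing $C\gamma$ above the constant in \cref{t:KT} contradicts $K_t$-minor-freeness; so we may also assume $f<\gamma\sqrt{\log t}$. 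Now failure of the theorem says exactly that $\chi(H)/a\le f< k/(Ct)-1$ for every $K_a$-minor-free $H\subseteq G$ with $|V(H)|\le Ca\log^4 a$ and $t\ge a\ge t/\sqrt{\log t}$; in particular, taking $a=t$ (legal since $H\subseteq G$ is $K_t$-minor-free), \emph{no} subgraph of $G$ on at most $Ct\log^4 t$ vertices has chromatic number $\ge k/C$. So it suffices to establish the following \emph{small dense witness} statement and then contradict it in the minimum counterexample by producing a $K_t$ minor: \emph{every $K_t$-minor-free graph with $\delta\ge k-1\ge Ct-1$ has a subgraph on $O(t\,\mathrm{polylog}\,t)$ vertices of chromatic number $\Omega(k)$.}

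The first tool is the standalone sparsification result (every $K_t$-minor-free graph of average degree $d=\Omega(t)$ has a subgraph on $O(t\log^3 t)$ vertices of average degree $\Omega(d)$): applied to $G$ it gives $G_1\subseteq G$ with $|V(G_1)|=O(t\log^3 t)\le Ct\log^4 t$ and average degree $\Omega(k)$. If $\chi(G_1)=\Omega(k)$ we are done; the obstacle is that a lower bound on average (or minimum) degree gives no lower bound on $\chi$, and resolving this is exactly where a chromatic dichotomy is needed.

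I would split into two cases according to whether $\chi$ concentrates into a much smaller subgraph of $G$ (or of the small dense piece above). Call $G$ \emph{chromatic-separable} if deleting $O(t)$ vertices and taking one side of a resulting separation leaves a proper subgraph $G'\subsetneq G$ with $\chi(G')\ge k-O(t)$ but $|V(G')|$ smaller than $|V(G)|$ by a constant (or polynomial) factor. In the separable case one builds the minor \emph{recursively}: apply the induction hypothesis to $G'$ (using $f_{\ref{t:tech}}(G',t)\le f$) to extract a small highly chromatic subgraph of $G'\subseteq G$; the separation must be arranged so that only a bounded-in-$t$ number of rounds occur, keeping the cumulative loss in $\chi$ to $O(t)$ and the cumulative size within the $O(t\log^4 t)$ budget. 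In the \emph{chromatic-inseparable} case --- no cheap cut exists, so the neighbourhood of any small connected set still carries almost all of $\chi$ --- one builds the minor \emph{sequentially}: grow branch sets one at a time, each time restricting the live part to (a small dense subgraph of) the common neighbourhood of the branch sets built so far, so that the branch sets are pairwise adjacent; chromatic-inseparability guarantees that this restriction loses only a $(1-O(1/t))$ factor of $\chi$, so after $t$ rounds the live part still has chromatic number, hence minimum degree in a critical subgraph, $\Omega(t)$ --- enough to keep the construction going and finish a $K_t$ minor. The sparsification result is reused inside the sequential construction to keep the live part on $O(t\,\mathrm{polylog}\,t)$ vertices, which is what makes inseparability a usable hypothesis and, should the construction ever stall, leaves behind precisely the required small highly chromatic subgraph.

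In every branch we thus either find a $K_t$ minor in $G$ --- impossible --- or find $H\subseteq G$ with $|V(H)|\le Ca\log^4 a$, $H$ $K_a$-minor-free, $t\ge a\ge t/\sqrt{\log t}$, and $\chi(H)\ge k/C$, forcing $f\ge k/(Ct)$ and contradicting $k>Ct(1+f)$; so the minimum counterexample cannot exist. The routine parts are the Kostochka--Thomason reductions and the constant-chasing. The real work --- and where I expect the main obstacle to lie --- is twofold: proving the standalone sparsification result (compressing to $O(t\log^3 t)$ vertices in a single step while preserving density), and making the chromatic dichotomy \emph{close}, i.e.\ bounding the recursion depth (hence the accumulated drop in $\chi$) in the separable case, and showing in the inseparable case that ``no cheap chromatic cut'' genuinely lets the sequential branch-set construction survive $t$ rounds without the minimum degree of the live part collapsing.
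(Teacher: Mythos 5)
Your high-level architecture matches the paper's --- pass to a minimum counterexample in which no small subgraph is highly chromatic, compress to small dense pieces, and build a $K_t$ minor via a chromatic-separability dichotomy, sequentially in the inseparable case and recursively in the separable case. But the proposal leaves open exactly the load-bearing steps, and where it commits to a mechanism, the mechanism is not one that works. First, the compression step (Theorem~\ref{t:SmallConn2}) is invoked as a known tool, but it is one of the two main new results to be proved; you acknowledge this. Second, your recursive case does not close. You recurse on $\vert(G)$ via a small vertex cut shrinking the graph by a constant factor, giving recursion depth $\Theta(\log \vert(G))$ --- unbounded in $t$ --- while each level costs $\Omega(t(1+f))$ in chromatic number, not $O(t)$: every appeal to the hypothesis on small subgraphs (e.g.\ deleting or coloring a piece on $Ct\log^4 t$ vertices) forfeits $t\cdot f$ colors. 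You flag the need to bound the number of rounds but give no mechanism. The paper's fix is structurally different: the recursion is on the order $a_i=(2/3)^i t$ of the minor being built (trinary branching, depth $O(\log\log t)$, bottoming out at $a\le t/\sqrt{\log t}$ via Lemma~\ref{l:rooted3}), so the per-level loss $O(a_i(1+f))$ telescopes to $O(t(1+f))$; and, crucially, each node links its roots, terminals and reserved subgraph into a small dense hub \emph{before} invoking separability (Lemma~\ref{lem:MengerVariant}, Lemma~\ref{l:woven}), so a child need only preserve linkages to its siblings rather than to all of its ancestors. This ``link before separating'' step, together with recursing on minor order rather than on $\vert(G)$, is the missing idea.

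Third, your inseparable case rests on a property that inseparability does not provide. Being $s$-chromatic-inseparable means one cannot find \emph{two vertex-disjoint} subgraphs each of chromatic number at least $\chi(G)-s$ (your definition, via a small cut, is also not the paper's and the relevant threshold is $Ct(1+f)$, not $O(t)$); it says nothing about the chromatic number of the common neighbourhood of previously built branch sets, and restricting the live part to such a common neighbourhood can collapse $\chi$ entirely. The paper never restricts to neighbourhoods: new branch sets are made pairwise adjacent to old ones by routing Menger linkages through a small $k$-connected hub $D$ (using wovenness of the $J_i$ and knittedness of $D$), and inseparability is invoked exactly once per stage, to show that after excising the material consumed by the construction the surviving high-chromatic subgraph must intersect the reserved one in at least $k$ vertices, so the two merge into a single $k$-connected high-chromatic subgraph for the next stage. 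The construction also runs for only $\lceil\sqrt{\log t}\rceil$ stages of $\lceil t/\sqrt{\log t}\rceil$ branch sets each; $t$ single-vertex rounds of neighbourhood restriction would not survive any loss accounting. So the skeleton is right, but both standalone ingredients and the two ideas that make the dichotomy close are absent.
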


Theorem~\ref{t:tech} has a number of interesting corollaries. As mentioned, our first corollary of Theorem~\ref{t:tech} is Theorem~\ref{thm:ordinaryHadwiger3}. This follows straightforwardly by using the best-known bounds on the chromatic number of small $K_t$-minor-free graphs. We derive Theorem~\ref{thm:ordinaryHadwiger3} in Section~\ref{s:small}. 

Our second corollary is that Linear Hadwiger's Conjecture reduces to small graphs as follows.

\begin{cor}\label{cor:ReduceToSmall}
There exists an integer $C=C_{\ref{cor:ReduceToSmall}} \ge 1$ such that the following holds: If for every integer $t\ge 3$ we have that every $K_t$-minor-free graph $H$ with $\vert(H)\le Ct \log^4 t$ satisfies $\chi(H)\le Ct$, then for every integer $t\ge 3$ we have that every $K_t$-minor-free graph $G$ satisfies $\chi(G) \le C^2 t$.
\end{cor}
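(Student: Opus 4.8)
The plan is to obtain \cref{cor:ReduceToSmall} as an essentially immediate consequence of \cref{t:tech}: the hypothesis of the corollary is precisely what is needed to bound the auxiliary quantity $f_{\ref{t:tech}}(G,t)$ by a constant, and then \cref{t:tech} does the rest. Concretely, I would set $C := C_{\ref{cor:ReduceToSmall}} := 2\,C_{\ref{t:tech}}$ (any $C \ge C_{\ref{t:tech}}+1$ works just as well); note in particular $C \ge C_{\ref{t:tech}}$. Assume the hypothesis of the corollary holds with this value of $C$, fix an integer $t \ge 3$, and fix a $K_t$-minor-free graph $G$.

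The key step is to show $f_{\ref{t:tech}}(G,t) \le C$. Consider any term in the maximum defining $f_{\ref{t:tech}}(G,t)$, i.e.\ a subgraph $H \subseteq G$ together with an integer $a$ with $t \ge a \ge t/\sqrt{\log t}$ such that $H$ is $K_a$-minor-free and $\vert(H) \le C_{\ref{t:tech}}\, a \log^4 a$. Since $x \mapsto x/\sqrt{\log x}$ is increasing for $x \ge 3$, we get $a \ge t/\sqrt{\log t} \ge 3/\sqrt{\log 3} > 2$, so $a \ge 3$; and $\vert(H) \le C_{\ref{t:tech}}\, a\log^4 a \le C\, a\log^4 a$ because $C \ge C_{\ref{t:tech}}$. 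Thus $H$ is a $K_a$-minor-free graph on at most $C a \log^4 a$ vertices with $a \ge 3$, so the hypothesis, applied with $a$ in place of $t$, yields $\chi(H) \le C a$, i.e.\ $\chi(H)/a \le C$. Taking the maximum over all such $(H,a)$ gives $f_{\ref{t:tech}}(G,t) \le C$.

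Finally, \cref{t:tech} gives $\chi(G) \le C_{\ref{t:tech}} \cdot t \cdot (1 + f_{\ref{t:tech}}(G,t)) \le C_{\ref{t:tech}}\, t\,(1+C)$, and since $C = 2\,C_{\ref{t:tech}}$ we have $C_{\ref{t:tech}}(1+C) = \tfrac12 C(1+C) \le C^2$ (as $C \ge 1$), so $\chi(G) \le C^2 t$, as required. There is no real obstacle here; the only point requiring care is the constant bookkeeping --- choosing a single $C$ simultaneously large enough to absorb the size bound $C_{\ref{t:tech}}\,a\log^4 a \le C\, a\log^4 a$ appearing in the definition of $f_{\ref{t:tech}}$ and large enough that $C_{\ref{t:tech}}\,t\,(1+C) \le C^2 t$ --- together with the trivial observation that $a \ge 3$, which is what lets us invoke the hypothesis with parameter $a$.
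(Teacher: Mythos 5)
Your proposal is correct and is exactly the paper's argument: the paper also sets $C=2C_{\ref{t:tech}}$ and notes that the corollary follows immediately from Theorem~\ref{t:tech}, with your write-up simply spelling out the bound $f_{\ref{t:tech}}(G,t)\le C$ and the constant bookkeeping $C_{\ref{t:tech}}t(1+C)\le C^2 t$ that the paper leaves implicit.
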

\begin{proof}
Follows from Theorem~\ref{t:tech} by setting $C=2C_{\ref{t:tech}}$.
\end{proof}

Our third corollary of Theorem~\ref{t:tech} shows that Linear Hadwiger's Conjecture holds if the clique number of the graph is small as a function of $t$.

\begin{cor}\label{cor:HadwigerSubLogClique}
There exists $C=C_{\ref{cor:HadwigerSubLogClique}}\ge 1$ such that the following holds: Let $t\ge 3$ be an integer. If $G$ is a $K_t$-minor-free graph with $\omega(G) \le \frac{\sqrt{\log t}}{(\log\log t)^2}$, then $\chi(G)\le Ct$.
\end{cor}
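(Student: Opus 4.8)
The plan is to derive this straight from Theorem~\ref{t:tech}. Write $C_0:=C_{\ref{t:tech}}$. By that theorem it suffices to show $f_{\ref{t:tech}}(G,t)=O(1)$ whenever $G$ is $K_t$-minor-free with $\omega(G)\le\sqrt{\log t}/(\log\log t)^2$; then $\chi(G)\le C_0\cdot t\cdot(1+f_{\ref{t:tech}}(G,t))=O(t)$ as required. (For the finitely many $t$ below any fixed threshold, the Kostochka--Thomason bound already gives $\chi(G)=O(t)$, so after enlarging the constant we may assume $t$ is as large as we like.) Unwinding the definition of $f_{\ref{t:tech}}$, it is enough to prove that every subgraph $H\subseteq G$ with $\vert(H)\le C_0\,a\log^4 a$, where $t/\sqrt{\log t}\le a\le t$, satisfies $\chi(H)=O(a)$; note that the $K_a$-minor-freeness of $H$ will not even be used, only that $H$ is small and has small clique number.

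The two ingredients are: (i) in this range $\log a=\Theta(\log t)$, and hence $\log\log a=\Theta(\log\log t)$, since $a\ge t/\sqrt{\log t}\ge\sqrt t$ for $t$ large, so $\tfrac12\log t\le\log a\le\log t$; and (ii) since $H\subseteq G$, $\omega(H)\le\omega(G)\le\sqrt{\log t}/(\log\log t)^2$, which by (i) is at most $r:=\big\lceil \sqrt2\,\sqrt{\log a}/(\log\log a)^2\big\rceil=o(\log a)$. Thus $H$ is a $K_{r+1}$-free graph on $n\le C_0\,a\log^4 a$ vertices.

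To colour such an $H$ I would iterate Ramsey's theorem: from $R(r+1,k)\le\binom{r+k-1}{r}\le(2ek)^r$ (for $k\ge r$) it follows that any $K_{r+1}$-free graph on $m\ge(2er)^r$ vertices contains an independent set of size $\Omega(m^{1/r})$. Deleting such sets repeatedly and summing a geometric series over the dyadic scales of the remaining vertex count shows $\chi(H)\le O\big(n^{1-1/r}\big)+(2er)^r$. Now if $n\le a$ then trivially $\chi(H)\le a$; otherwise $n^{1-1/r}=n\cdot n^{-1/r}\le C_0\,a\log^4 a\cdot\exp\!\big(-(\log a)/r\big)\le C_0\,a\,\exp\!\big(4\log\log a-\tfrac1{\sqrt2}\sqrt{\log a}\,(\log\log a)^2\big)=o(a)$, and $(2er)^r=\exp\!\big(O(\sqrt{\log a}/\log\log a)\big)=a^{o(1)}=o(a)$. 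Hence $\chi(H)=O(a)$, so $f_{\ref{t:tech}}(G,t)=O(1)$, and Theorem~\ref{t:tech} finishes the proof.

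The only delicate point is the estimate in the last paragraph: because $\vert(H)$ is bounded only by $a$ times a polylogarithmic factor, the independent sets removed must be polylogarithmically large, and this is exactly what the (comfortably sub-logarithmic) hypothesis on $\omega(G)$ buys — it forces both error terms $n^{1-1/r}$ and $(2er)^r$ down to $o(a)$. Everything else is bookkeeping: matching the ranges of $a$ and $t$, and disposing of small $t$ via Kostochka--Thomason. (There is in fact slack in the threshold here; $\omega(G)=O(\log t/(\log\log t)^2)$ would already suffice for this argument, but the stated bound is clean and has room to spare.)
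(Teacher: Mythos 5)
Your argument is correct, but it proves the corollary by a genuinely different (and more elementary) route than the paper. The paper's proof goes through Corollary~\ref{cor:SmallCliqueSmallRho}: it combines the Kostochka--Thomason density bound (Theorem~\ref{t:density}) with Molloy's theorem on $K_r$-free graphs of bounded maximum degree (Theorem~\ref{t:Molloy}) to show that every $K_t$-minor-free graph satisfies $\rho(G)=O\bigl(\omega(G)\cdot t\log\log t/\sqrt{\log t}\bigr)$, and then feeds this Hall-ratio bound into Lemma~\ref{lem:small} exactly as in the derivation of Theorem~\ref{thm:ordinaryHadwiger3}. You instead discard the $K_a$-minor-freeness of the small subgraphs entirely and use only the classical Ramsey bound $R(r+1,k)\le\binom{r+k-1}{r}$ to extract independent sets of size $\Omega(m^{1/r})$, giving $\chi(H)\le O(\vert(H)^{1-1/r})+(2er)^{r}$; the hypothesis on $\omega(G)$ then kills both error terms because $\vert(H)$ exceeds $a$ only polylogarithmically. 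What each buys: your route avoids Molloy's theorem altogether and, as you observe, tolerates clique number up to $\Theta(\log t/(\log\log t)^2)$, which is a wider range than the stated $\sqrt{\log t}/(\log\log t)^2$; the paper's route, by contrast, yields a Hall-ratio bound valid for $K_t$-minor-free graphs of \emph{arbitrary} order (the maximum-degree control comes from minor-freeness rather than from a vertex-count hypothesis), which is a statement of independent interest, whereas your Ramsey estimate is only useful because $f_{\ref{t:tech}}$ ranges exclusively over subgraphs with $O(a\log^4 a)$ vertices. Your bookkeeping (the range $\log a\ge\tfrac12\log t$, the dyadic summation, the disposal of small $t$ via Theorem~\ref{t:KT}) is all sound.
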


We derive Corollary~\ref{cor:HadwigerSubLogClique} in Section~\ref{s:small} by utilizing better bounds on the chromatic number of graphs with small clique number. 

In 2003, K\"uhn and Osthus~\cite{KO03} proved that Hadwiger's Conjecture holds for graphs of girth at least five provided that $t$ is sufficiently large. In 2005, K\"uhn and Osthus~\cite{KO05} extended this result to the class of $K_{s,s}$-free graphs for any fixed positive integer $s\ge 2$. Along this line, as a corollary of Theorem~\ref{t:tech}, we prove that Linear Hadwiger's Conjecture holds for the class of $K_r$-free graphs for every fixed $r$. More generally, the following holds.

\begin{cor}\label{cor:ordinaryHadwigerKrFree}
There exists $C=C_{\ref{cor:ordinaryHadwigerKrFree}}\ge 1$ such that for every integer $r\ge 3$, there exists $t_r > 0$ such that for all integers $t\ge t_r$, every $K_r$-free $K_t$-minor-free graph is $Ct$-colorable.
\end{cor}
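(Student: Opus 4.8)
The plan is to deduce this from Corollary~\ref{cor:HadwigerSubLogClique} together with a compactness-style observation about how large $t$ must be relative to $r$. The point is that Corollary~\ref{cor:HadwigerSubLogClique} already gives us a $C = C_{\ref{cor:HadwigerSubLogClique}}$ and shows that every $K_t$-minor-free graph $G$ with $\omega(G) \le \frac{\sqrt{\log t}}{(\log\log t)^2}$ is $Ct$-colorable. So the entire task reduces to choosing, for each fixed $r\ge 3$, a threshold $t_r$ such that for all $t\ge t_r$ we have $r - 1 \le \frac{\sqrt{\log t}}{(\log\log t)^2}$; then any $K_r$-free graph has $\omega(G) \le r-1 \le \frac{\sqrt{\log t}}{(\log\log t)^2}$ and Corollary~\ref{cor:HadwigerSubLogClique} applies directly with the \emph{same} constant $C = C_{\ref{cor:ordinaryHadwigerKrFree}} := C_{\ref{cor:HadwigerSubLogClique}}$, which is independent of $r$ as required.

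First I would fix $C_{\ref{cor:ordinaryHadwigerKrFree}} := C_{\ref{cor:HadwigerSubLogClique}}$. Next, given $r \ge 3$, I would observe that the function $t \mapsto \frac{\sqrt{\log t}}{(\log \log t)^2}$ tends to infinity as $t\to\infty$ (the numerator grows like $(\log t)^{1/2}$ while the denominator grows only like $(\log\log t)^2$, so the ratio is $(\log t)^{1/2 - o(1)} \to \infty$). Hence there exists $t_r$ such that $\frac{\sqrt{\log t}}{(\log\log t)^2} \ge r - 1$ for all integers $t \ge t_r$; I would just define $t_r$ to be the least such integer (also taking $t_r \ge 3$ so that Corollary~\ref{cor:HadwigerSubLogClique} is applicable). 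Then, for any integer $t \ge t_r$ and any $K_r$-free $K_t$-minor-free graph $G$, since $G$ contains no $K_r$ we have $\omega(G) \le r - 1 \le \frac{\sqrt{\log t}}{(\log\log t)^2}$, so Corollary~\ref{cor:HadwigerSubLogClique} yields $\chi(G) \le C_{\ref{cor:HadwigerSubLogClique}}\, t = C_{\ref{cor:ordinaryHadwigerKrFree}}\, t$, as desired.

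There is essentially no obstacle here: the corollary is a purely bookkeeping consequence of Corollary~\ref{cor:HadwigerSubLogClique}, and the only thing to check is the elementary limit $\frac{\sqrt{\log t}}{(\log\log t)^2} \to \infty$, which guarantees the clique-number hypothesis is automatically satisfied once $t$ is large compared to $r$. The one point worth emphasizing in the write-up is that the constant $C$ does not depend on $r$ — this is immediate because $C_{\ref{cor:HadwigerSubLogClique}}$ is a fixed absolute constant and only the threshold $t_r$ (not the multiplicative constant) is allowed to depend on $r$. If one instead wanted a version valid for \emph{all} $t\ge 1$ rather than $t\ge t_r$, one would have to pay a factor depending on $r$ for the small values $t < t_r$ (trivially, any $K_r$-free graph on at most $t_r$-bounded-minor structure is colorable with a number of colors depending only on $r$), but the statement as given only asks for large $t$, so no such adjustment is needed.
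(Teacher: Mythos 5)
Your proposal is correct and is essentially identical to the paper's proof: both set $C:=C_{\ref{cor:HadwigerSubLogClique}}$, choose $t_r$ so that the clique-number hypothesis $\omega(G)\le r-1\le \frac{\sqrt{\log t}}{(\log\log t)^2}$ holds for all $t\ge t_r$, and then apply Corollary~\ref{cor:HadwigerSubLogClique} directly. Your added remark that the threshold must be chosen so the inequality holds for \emph{all} $t\ge t_r$ (using that the function tends to infinity) is a slightly more careful phrasing of the same one-line argument.
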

\begin{proof}
Let $C=C_{\ref{cor:HadwigerSubLogClique}}$. For each integer $r\ge 3$, let $t_r$ be such that $r\le \frac{\sqrt{\log t_r}}{(\log\log t_r)^2}$. By Corollary~\ref{cor:HadwigerSubLogClique}, it follows that for all integers $t\ge t_r$ we have that every $K_r$-free $K_t$-minor-free is $Ct$-colorable.
\end{proof}

Note that the constant in Corollary~\ref{cor:ordinaryHadwigerKrFree} does not depend on $r$ but requires $t$ to be sufficiently large with respect to $r$. Allowing the constant to depend on $r$ eliminates that latter assumption as follows.

\begin{cor}\label{cor:ordinaryHadwigerKrFree2}
For every integer $r\ge 3$, there exists $C_r\ge 1$ such that for all integers $t\ge 1$, every $K_r$-free $K_t$-minor-free graph is $C_r \cdot t$-colorable.
\end{cor}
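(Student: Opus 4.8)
The plan is to combine Corollary~\ref{cor:ordinaryHadwigerKrFree} with the trivial observation that for the finitely many values of $t$ below the threshold $t_r$, the Kostochka--Thomason bound (Theorem~\ref{t:KT}) already gives a bound depending only on $r$ (indeed only on $t$, hence only on $r$ since $t$ is bounded). Concretely, fix $r\ge 3$. Let $C=C_{\ref{cor:ordinaryHadwigerKrFree}}$ and let $t_r$ be the threshold produced by Corollary~\ref{cor:ordinaryHadwigerKrFree}, so that every $K_r$-free $K_t$-minor-free graph with $t\ge t_r$ is $Ct$-colorable. For $t< t_r$, every $K_t$-minor-free graph is $K_{t_r}$-minor-free, hence by Theorem~\ref{t:KT} is $O(t_r\sqrt{\log t_r})$-colorable; write $D_r$ for this bound, which depends only on $r$. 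Then set $C_r := \max\{C,\ D_r\}$. For $t\ge t_r$ we get $\chi(G)\le Ct\le C_r t$, and for $1\le t< t_r$ we get $\chi(G)\le D_r\le D_r t\le C_r t$. This handles all $t\ge 1$ with a single constant $C_r$ depending only on $r$.

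An alternative, slightly cleaner packaging avoids invoking Theorem~\ref{t:KT}: for $t< t_r$ one may simply note that a $K_t$-minor-free graph has no $K_t$ subgraph, so $\chi(G)\le \binom{t}{2}$ trivially (or even just bound by the number of vertices is not available, but any crude minor-free coloring bound works); absorbing $\binom{t_r}{2}$ into $C_r$ as above finishes the argument. Either way the point is that only finitely many small cases are uncovered by Corollary~\ref{cor:ordinaryHadwigerKrFree}, and each is bounded by a quantity depending on $r$ alone.

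I do not anticipate a genuine obstacle here: the corollary is a routine ``finitely many exceptional cases'' tidy-up of the preceding corollary, and the only thing to be careful about is making explicit that $t_r$ and therefore the exceptional range, and the crude bound on it, depend only on $r$. The one substantive ingredient, the uniform-constant statement for large $t$, is exactly Corollary~\ref{cor:ordinaryHadwigerKrFree}, which in turn rests on Corollary~\ref{cor:HadwigerSubLogClique} and hence on the main technical result Theorem~\ref{t:tech}.
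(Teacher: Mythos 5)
Your main argument is correct and is essentially the paper's proof: split at the threshold $t_r$ from Corollary~\ref{cor:ordinaryHadwigerKrFree} and absorb the finitely many small values of $t$ into the constant. The only difference is cosmetic: for $t<t_r$ the paper bounds $\chi(G)$ by $2\cdot C_{\ref{t:tech}}^2\cdot(\log^4 t_r)\cdot t$ via Theorem~\ref{t:tech} (using that $f_{\ref{t:tech}}(G,t)=O(\log^4 t)$ since $\chi(H)\le\vert(H)$), whereas you use Theorem~\ref{t:KT}; both give a bound depending only on $r$, so either works. One caution on your aside: the claim that a graph with no $K_t$ \emph{subgraph} satisfies $\chi(G)\le\binom{t}{2}$ is false (triangle-free graphs have unbounded chromatic number) --- you genuinely need a minor-free degeneracy bound such as Theorem~\ref{t:KT} or Mader's elementary $\e(G)\le 2^{t-2}\vert(G)$ for the small cases, but since your primary route already uses Theorem~\ref{t:KT}, this does not affect the proof.
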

\begin{proof}
Let $t_r$ be as in Corollary~\ref{cor:ordinaryHadwigerKrFree} and let $C_r = \max\{ C_{\ref{cor:ordinaryHadwigerKrFree}},~ 2\cdot C_{\ref{t:tech}}^2 \cdot \log^4 t_r\}$. By Corollary~\ref{cor:ordinaryHadwigerKrFree}, we have that for all integers $t\ge t_r$, every $K_r$-free $K_t$-minor-free is $C_{\ref{cor:ordinaryHadwigerKrFree}}\cdot t$-colorable and hence is $C_r\cdot t$-colorable. By Theorem~\ref{t:tech}, we have that for all integers $t$ with $t_r\ge t\ge 1$, every $K_r$-free $K_t$-minor-free is $2\cdot C_{\ref{t:tech}}^2 \cdot (\log^4 t_r)\cdot t$-colorable and hence is $C_r\cdot t$-colorable. Hence for all integers $t\ge 1$, we find that every $K_r$-free $K_t$-minor-free graph is $C_r\cdot t$-colorable.
\end{proof}

Thus Corollary~\ref{cor:ordinaryHadwigerKrFree2} proves Linear Hadwiger's for the class of $K_r$-free graphs for every fixed $r$. On the other hand, in 2017, Dvo\v{r}\'ak and Kawarabayashi~\cite{DK17} showed that there exist triangle-free graphs of tree-width at most $t$ and chromatic number at least $\left\lceil \frac{t+3}{2} \right\rceil$. Hence the result in Corollary~\ref{cor:ordinaryHadwigerKrFree2} is tight up to the multiplicative constant.


\subsection{Notation}\label{sub:not}

We use largely standard graph-theoretical notation. We denote by $\vert(G)$ and $\e(G)$ the number of vertices and edges of a graph $G$, respectively, and denote by $\d(G)=\e(G)/\vert(G)$ the \emph{density} of a non-empty graph $G$. We use $\chi(G)$ to denote the chromatic number of $G$, $\alpha(G)$ to denote the independence number of $G$, $\omega(G)$ to denote the clique number of $G$, $\delta(G)$ to denote the minimum degree of $G$, and $\kappa(G)$ to denote the (vertex) connectivity of $G$. 
 
The degree of a vertex $v$ in a graph $G$ is denoted by $\deg_G(v)$ or simply by $\deg(v)$ if there is no danger of confusion. We denote by $G[X]$ the subgraph of $G$ induced by a set $X \subseteq V(G)$. If $A$ and $B$ are disjoint subsets of $V(G)$, then we let $G(A,B)$ denote the bipartite subgraph with $V(G(A,B))=A\cup B$ and $E(G(A,B)) = \{uv\in E(G): u\in A,~v\in B\}$.

We say that vertex-disjoint subgraphs $H$ and $H'$ of a graph $G$ are \emph{adjacent} if there exists an edge of $G$ with one end in $V(H)$ and the other end in $V(H')$, and we say that $H$ and $H'$ are \emph{non-adjacent}, otherwise. For a positive integer $n$, let $[n]$ denote $\{1,2,\ldots,n\}$. A collection $\mc{X} = \{X_1,X_2,\ldots,X_h\}$ of pairwise vertex-disjoint subgraphs of $V(G)$ is a \emph{model of a graph $H$ in a graph $G$} if $X_i$ is connected for every $i \in [h]$, and there exists a bijection $\phi: V(H) \to [h]$ such that $X_{\phi(u)}$ and $X_{\phi(v)}$ are adjacent for every $uv \in E(H)$. It is well-known and not hard to see that $G$ has an $H$ minor if and only if there exists a model of $H$ in $G$. We write $V(\mathcal{X})$ for $\bigcup_{i\in [h]} V(X_i)$. We say that a model $\mc{X}$ as above is \emph{rooted at $S$} for $S \subseteq V(G)$ if $|S|=h$  and  $|V(X_i) \cap S|=1$ for every $i \in [h]$. The logarithms in this paper are natural unless specified otherwise.  

\section{Outline of Proof}\label{sec:outline}

The proof of Theorem~\ref{t:ordinaryHadwiger} in~\cite{NPS19} proceeds by proving the contrapositive and consists of three parts:
\begin{enumerate}
\item[(1)] Show that small $K_t$-minor-free graphs are $O(t\log \log t)$-colorable where small means the graph has at most $t (\log t)^{O(1)}$ vertices.
\item[(2)] Show that a dense $K_t$-minor-free graph has a small dense highly-connected subgraph; and hence in combination with (1), a high chromatic $K_t$-minor-free graph has many vertex-disjoint dense highly-connected subgraphs.
\item[(3)] Use connectivity tools to build a $K_t$ minor \emph{all at once} by building smaller minors inside each subgraph and linking them appropriately.
\end{enumerate}

Our proof of Theorem~\ref{t:tech} follows this simplified strategy in outline; however for parts (2) and (3), we use new approaches for the proof. In particular for (3), we build the minor in two new ways: sequentially and recursively. Let us describe the differences in more detail.

For (1), the best bound is still $O(t\log \log t)$ colors and thus, using the results of this paper, will be the bottleneck for Linear Hadwiger's Conjecture; however, in Section~\ref{s:small} we improve the bound for small $K_t$ minor-free graphs of small clique number (as in Corollary~\ref{cor:HadwigerSubLogClique}) to $O(t)$ colors using state of the art coloring techniques.

For (2), Norin, Song and the second author~\cite{NPS19} proved a so-called ``density increment theorem" which finds either a denser minor or a small dense subgraph as follows.

\begin{thm}\label{t:newforced} Let $G$ be a graph with $\d(G) \ge 1$, and let $D > 0$ be a constant. Let $s=D/\d(G)$ and let $g_{\ref{t:newforced}}(s) := s^{o(1)}$.  Then $G$ contains at least one of the following: 
\begin{description}
		\item[(i)] a minor $J$ with $\d(J) \geq D$, or
		\item[(ii)] a subgraph $H$ with $\vert(H) \leq g_{\ref{t:newforced}}(s) \cdot \frac{D^2}{\d(G)}$ and $\d(H) \geq \frac{\d(G)}{g_{\ref{t:newforced}}(s)}$.
	\end{description}  
\end{thm}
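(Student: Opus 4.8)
The plan is to prove the contrapositive: assuming $G$ has no minor of density at least $D$ and no subgraph $H$ with $\vert(H)\le g_{\ref{t:newforced}}(s)\cdot D^2/\d(G)$ and $\d(H)\ge\d(G)/g_{\ref{t:newforced}}(s)$, I will derive a contradiction. Write $d=\d(G)$ and $n=\vert(G)$. Since $G$ is a minor of itself we may assume $d<D$, so $s=D/d>1$, and we may assume $s$ exceeds any prescribed absolute constant, since for bounded $s$ one takes $g_{\ref{t:newforced}}$ to be a large constant and argues crudely (for instance straight from \cref{t:KT}). Since $\d(G)=d\ge d/g_{\ref{t:newforced}}(s)$, if $n\le g_{\ref{t:newforced}}(s)D^2/d$ then $G$ itself violates the assumption; hence $n>g_{\ref{t:newforced}}(s)D^2/d$.

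First I would reduce to a well-behaved minor $H_0$ of $G$ by repeatedly applying operations that do not decrease the density: delete a vertex of degree at most $d$; contract an edge $uv$ with $|N(u)\cap N(v)|\le d-1$ (a one-line computation shows the density cannot drop); and delete a vertex set $S$ spanning at least half the current vertices and incident to at most $d|S|$ edges in total (internal plus leaving), which again cannot decrease the density. Each operation reduces the order, so the process terminates; whenever the current graph has at most $g_{\ref{t:newforced}}(s)D^2/d$ vertices we have found a forbidden subgraph (its density is still at least $d$) and whenever its density reaches $D$ we have found the desired minor. So we may assume $H_0$ has more than $g_{\ref{t:newforced}}(s)D^2/d$ vertices, density in $[d,D)$, minimum degree more than $d$, no large sparsely attached vertex subset (failure of the first and third operations), and no edge in more than $d$ triangles (failure of the second).

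The crux is to show that such an $H_0$ either hides a small dense subgraph after all, contradicting the assumption, or expands well enough to carry a clique minor of order at least $D$. The point of the structural constraints on $H_0$ is exactly that \cref{t:KT} applied directly only produces a $K_\ell$ minor with $\ell=\Omega(d/\sqrt{\log d})$---a loss of a $\sqrt{\log}$ factor in density---so one must do better by exploiting expansion. I would first pass to an induced subgraph $H_1$ of $H_0$ with $\d(H_1)\ge d/g_{\ref{t:newforced}}(s)$ that is a $\lambda$-expander with $\lambda=1/g_{\ref{t:newforced}}(s)$ (every vertex set of at most half the order has boundary at least $\lambda$ times its size), by iteratively deleting the smaller side of any separation that is too sparse relative to it; the delicate claim---and where the $s^{o(1)}$, rather than merely constant, slack in $g_{\ref{t:newforced}}$ is spent---is that if the density fell by more than a $g_{\ref{t:newforced}}(s)$ factor along the way then some deleted side is itself a small dense subgraph. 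If $\vert(H_1)\le g_{\ref{t:newforced}}(s)D^2/d$ we are again done, so assume $\vert(H_1)>g_{\ref{t:newforced}}(s)D^2/d$; then $\sqrt{\d(H_1)\cdot\vert(H_1)}\ge D$. Finally I would invoke the standard toolkit for clique minors in expanding graphs---growing branch sets as bounded-radius balls and linking them through many disjoint short paths supplied by the expansion---to obtain a $K_\ell$ minor of $H_1$, hence of $G$, with $\ell$ within a polylogarithmic factor of $\sqrt{\d(H_1)\vert(H_1)}$; absorbing that factor into $g_{\ref{t:newforced}}$ (after checking that the relevant logarithm is $s^{o(1)}$ in the regime the reductions leave us in) gives $\ell\ge D$ and a minor of density at least $D$, the desired contradiction.

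I expect the main obstacle to be that last step, in both of its halves: extracting a genuine expander while losing only an $s^{o(1)}$ factor in density, and then converting expansion into a clique minor of nearly optimal order $\Theta\big(\sqrt{\d\cdot\vert}\big)$ rather than the $\sqrt{\log}$-lossy order that \cref{t:KT} would give---all the while keeping every logarithmic quantity that appears bounded by $s^{o(1)}$. The easy reductions and the density-monotone minor operations are routine; the careful bookkeeping of these polylogarithmic factors through the expander extraction and the minor construction is where the substance of the proof lies.
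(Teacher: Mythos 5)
First, a point of orientation: this paper does not actually prove \cref{t:newforced}; it imports it from~\cite{NPS19} and only sketches that proof in \cref{sec:outline} (the minor is built iteratively over roughly $\log s$ rounds, contracting connected pieces of size at most about $\log s$ while controlling the edge loss, with failure producing the small dense subgraph). Your proposal is therefore a genuinely different route: after the standard Mader-type reductions you pivot to extracting an expander and running a Krivelevich--Sudakov-style construction (branch sets as small balls linked by short paths) to get a clique minor of order near $\sqrt{\d(H_1)\vert(H_1)}$.

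The difficulty you flag parenthetically --- ``after checking that the relevant logarithm is $s^{o(1)}$'' --- is exactly where the approach breaks, and it is not a bookkeeping issue. The logarithms incurred by the expander machinery are logarithms of the order of the host graph (the diameter bound for the balls and the lengths of the $\binom{\ell}{2}$ connecting paths are $\Theta(\log n)$ up to the degree), hence at least $\log D$, and $\log D$ is not controlled by $s=D/\d(G)$. Concretely, take $s=2$, so $D=2d$ with $d\to\infty$: the theorem then demands either a minor of density $2d$ or a subgraph on $O(d)$ vertices of density $\Omega(d)$, with all implied constants absolute. In that regime your $H_1$ has $n=\Theta(d)$ vertices and density $\Theta(d)$, and you need a clique minor of order $\Theta(n)$, i.e.\ with \emph{constant-size} branch sets and \emph{no} logarithmic loss at all; the expander toolkit gives at best order $\sqrt{nd/\log n}$, which is off by the unbounded factor $\sqrt{\log d}$ --- precisely the \cref{t:KT}-type loss the theorem is designed to beat. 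The actual mechanism of~\cite{NPS19} avoids any $\log n$ or $\log d$ dependence because the contracted pieces have size $O(\log s)$ and the absence of small dense subgraphs directly bounds the number of common neighbours (hence the edge loss) per contraction, so the density roughly doubles each of the $O(\log s)$ rounds. Your opening reductions are sound and in the same spirit, and the minor quibble that $\ell\ge D$ only yields $\d(K_\ell)=(\ell-1)/2$ is easily fixed by aiming for $\ell\ge 2D+1$, but the expander step as proposed cannot deliver the $s^{o(1)}$ error term.
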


Since graphs with density $\Omega(t \sqrt{\log t})$ have $K_t$ minors by Theorem~\ref{t:KT}, the above theorem with $s=O(\sqrt{\log t})$ yields the following result as a corollary.

\begin{thm}\label{t:SmallConn}
Let $t\ge 3$ be an integer and define $f_{\ref{t:SmallConn}}(t) :=(\log t)^{o(1)}$. For every integer $k\ge t$, if $G$ is a graph with $\d(G) \ge k \cdot f_{\ref{t:SmallConn}}(t)$ and $G$ contains no $K_t$ minor, then $G$ contains a $k$-connected subgraph $H$ with $\vert(H) \le t \cdot f_{\ref{t:SmallConn}}(t) \cdot \log t$.
\end{thm}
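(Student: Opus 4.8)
The plan is to deduce Theorem~\ref{t:SmallConn} by combining the density increment theorem (Theorem~\ref{t:newforced}) with the Kostochka--Thomason bound (Theorem~\ref{t:KT}) and a classical theorem of Mader: there is an absolute constant $c_M$ such that every graph $H$ with $\d(H)\ge c_M k$ contains a $k$-connected subgraph. Throughout I may assume, after replacing it by a monotone majorant, that $g_{\ref{t:newforced}}$ is non-decreasing and at least $1$.

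First I would fix the target density $D$ for Theorem~\ref{t:newforced}. Since every minor of a $K_t$-minor-free graph is again $K_t$-minor-free, Theorem~\ref{t:KT} supplies a constant $c_1$ with $\d(J)\le c_1 t\sqrt{\log t}$ for every minor $J$ of $G$. Setting $D:=2c_1 t\sqrt{\log t}$, outcome (i) of Theorem~\ref{t:newforced} cannot occur for $G$, so we obtain a subgraph $H_0\subseteq G$ with
$$\vert(H_0)\le g_{\ref{t:newforced}}(s)\cdot \frac{D^2}{\d(G)}\qquad\text{and}\qquad \d(H_0)\ge \frac{\d(G)}{g_{\ref{t:newforced}}(s)},$$
where $s=D/\d(G)$. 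The key point, which breaks the apparent circularity below, is that as soon as $f_{\ref{t:SmallConn}}(t)\ge 1$ we have $\d(G)\ge k\,f_{\ref{t:SmallConn}}(t)\ge t$, so $s\le 2c_1\sqrt{\log t}=:s^{\ast}$ unconditionally, and hence $g_{\ref{t:newforced}}(s)\le g_{\ref{t:newforced}}(s^{\ast})=(\log t)^{o(1)}$.

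Next I would simply define
$$f_{\ref{t:SmallConn}}(t):=\max\left\{1,\; c_M\, g_{\ref{t:newforced}}(s^{\ast}),\; 2c_1\sqrt{g_{\ref{t:newforced}}(s^{\ast})}\right\},$$
which is of the required form $(\log t)^{o(1)}$ by routine asymptotic bookkeeping. Then, using $\d(G)\ge k\,f_{\ref{t:SmallConn}}(t)\ge t\,f_{\ref{t:SmallConn}}(t)$ and $f_{\ref{t:SmallConn}}(t)^2\ge (2c_1)^2 g_{\ref{t:newforced}}(s^{\ast})$, the size bound becomes
$$\vert(H_0)\le g_{\ref{t:newforced}}(s^{\ast})\cdot\frac{(2c_1)^2\, t^2\log t}{t\,f_{\ref{t:SmallConn}}(t)}=\frac{(2c_1)^2\, g_{\ref{t:newforced}}(s^{\ast})}{f_{\ref{t:SmallConn}}(t)}\cdot t\log t\le f_{\ref{t:SmallConn}}(t)\cdot t\log t,$$
while the density bound becomes $\d(H_0)\ge \d(G)/g_{\ref{t:newforced}}(s^{\ast})\ge k\,f_{\ref{t:SmallConn}}(t)/g_{\ref{t:newforced}}(s^{\ast})\ge c_M k$, using $f_{\ref{t:SmallConn}}(t)\ge c_M\, g_{\ref{t:newforced}}(s^{\ast})$.

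Finally I would apply Mader's theorem to $H_0$ to extract a $k$-connected subgraph $H\subseteq H_0$; since $\vert(H)\le \vert(H_0)\le f_{\ref{t:SmallConn}}(t)\cdot t\log t$, this is precisely the conclusion. I do not expect a genuine obstacle here: the mathematical content is carried entirely by Theorems~\ref{t:newforced} and~\ref{t:KT}, and the only delicate point is the circular-looking dependence of $s$—and hence of $g_{\ref{t:newforced}}(s)$—on $f_{\ref{t:SmallConn}}$, which is handled by the a priori estimate $s\le s^{\ast}$; small values of $t$ and the behaviour of $g_{\ref{t:newforced}}$ on bounded arguments are absorbed into constants, since the statement only asserts a bound up to a $(\log t)^{o(1)}$ factor and hidden absolute constants.
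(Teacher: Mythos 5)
Your proposal is correct and matches the paper's intended derivation: the paper presents Theorem~\ref{t:SmallConn} as a direct corollary of Theorem~\ref{t:newforced} with $D=\Theta(t\sqrt{\log t})$ (so that outcome (i) is excluded by Theorem~\ref{t:KT}), followed by Mader's lemma (Lemma~\ref{l:connect}) to convert the small dense subgraph into a $k$-connected one. Your handling of the dependence of $s$ on $\d(G)$ via the a priori bound $s\le s^{\ast}=O(\sqrt{\log t})$ and the choice of $f_{\ref{t:SmallConn}}$ is exactly the bookkeeping the paper leaves implicit.
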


While the function in Theorem~\ref{t:newforced} can be improved, even to $O\left((\log s)^6\right)$ using similar but more technical methods, its use still presented a bottleneck in reducing Linear Hadwiger's Conjecture to small graphs. Thus instead of improving Theorem~\ref{t:newforced} directly, we instead improve Theorem~\ref{t:SmallConn} directly via a completely different and shorter approach. 

In the proof of Theorem~\ref{t:newforced} and any attempted improvement, the denser minor from condition (i) is built iteratively (over roughly $\log s$ iterations); in each stage, only components of small size (at most $\log s$) are allowed to be contracted while showing that not too many edges are lost. If the process fails, a small dense subgraph as in condition (ii) is found. The small size of contractions (and hence the iterative approach) seemed necessary so as to limit the error in the size and density of the small dense subgraph. Unfortunately, this iterative approach then seems to have a natural barrier for $g_{\ref{t:newforced}}(s)$ of around $\log s$ and hence even in the best case would still present an obstacle to proving Linear Hadwiger's for all graphs let alone for the special classes in Corollary~\ref{cor:HadwigerSubLogClique}.

Thus we use a new approach where we attempt to find a $K_t$ minor in only one stage and when that fails, we find a small dense subgraph. Namely, in Section~\ref{s:smallConn}, we prove the following theorem which proves the existence in a $K_t$-minor-free graph $G$ of a small dense subgraph with only a constant loss in density. 

\begin{restatable}{thm}{SmallConnNew}\label{t:SmallConn2}
There exists an integer $C=C_{\ref{t:SmallConn2}} \ge 1$ such that the following holds: Let $t\ge 1$ be an integer. For every integer $k\ge t$, if $G$ is a graph with $\d(G) \ge Ck$ and $G$ contains no $K_t$ minor, then $G$ contains a non-empty $k$-connected subgraph $H$ with $\vert(H) \le C^2 \cdot t \cdot \log^3 t$.
\end{restatable}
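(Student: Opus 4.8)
The plan is to argue the contrapositive: assuming $G$ has no $K_t$ minor and $\d(G)\ge Ck$, I locate a set of at most $C^2 t\log^3 t$ vertices spanning a subgraph of density $\Omega(k)$, and then invoke the classical theorem of Mader that every graph of average degree at least $4k$ contains a $(k+1)$-connected subgraph; since that subgraph lies inside the small set, it has the required order. \textbf{Reduction to minimum degree.} First I would pass to a component of maximum density and then, by repeatedly deleting vertices of degree at most $\d(G)$, to a nonempty subgraph $H$ with $\delta(H)>\d(G)\ge Ck$ (this never decreases the density). Set $T:=C^2 t\log^3 t$. If $\vert(H)\le T$ we are already done, since $H$ has average degree $\ge Ck\ge 4k$ (for $C$ large) and Mader's theorem applied inside $H$ gives a $(k+1)$-connected subgraph of order $\le T$. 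So assume $\vert(H)>T$; note also that by \cref{t:KT} we have $\d(G)=O(t\sqrt{\log t})$, so $\log T=\Theta(\log t)$.

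\textbf{A BFS dichotomy.} Fix a vertex $v\in V(H)$, let $N_0,N_1,\dots$ be its distance layers, and put $b_i:=|N_0\cup\cdots\cup N_i|$, so $b_1>Ck$. Let $r$ be least with $b_r>T$. \emph{Case 1:} some $j\in\{1,\dots,r\}$ has $b_j\le 2b_{j-1}$. Then $b_{j-1}\le T$ (as $j-1<r$), hence $b_j\le 2T$; and since every vertex of $B(v,j-1)$ has all $\ge\delta(H)$ of its neighbours inside $B(v,j)$, we get $\e(H[B(v,j)])\ge\tfrac12\delta(H)b_{j-1}\ge\tfrac14\delta(H)b_j$, so $H[B(v,j)]$ has average degree $\ge\tfrac12\delta(H)\ge\tfrac{C}{2}k\ge 4k$. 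Mader's theorem then yields the desired $(k+1)$-connected subgraph on $\le 2T$ vertices (a constant rescaling of $C$ absorbs the factor $2$). \emph{Case 2:} $b_i>2b_{i-1}$ for every $i\le r$; then $b_i>2^i$ for all $i\le r$, and $b_{r-1}\le T$ forces $r\le 1+\log_2 T=O(\log t)$. Since $v$ was arbitrary, I may assume we are in Case 2 for \emph{every} vertex: every ball more than doubles at each step until, after only $O(\log t)$ steps, it first exceeds $T$. Equivalently, $H$ has no small locally dense spot and expands strongly at all scales.

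\textbf{The expanding case --- the main obstacle.} Here I would derive a contradiction by building a $K_t$ minor in $H$, using that $H$ now behaves like a pseudorandom expanding host of minimum degree $\ge Ck\ge Ct$. The idea is to grow $t$ pairwise-disjoint connected branch sets $X_1,\dots,X_t$, each a BFS ball of order $\Theta(\log^2 t)$ (reachable within radius $O(\log\log t)$ by the doubling property, and fitting inside $H$ since $t\cdot\Theta(\log^2 t)\le T$), and to argue that they are pairwise adjacent, so that $\{X_1,\dots,X_t\}$ is a model of $K_t$. Enforcing all $\binom t2$ adjacencies while keeping everything disjoint inside only $O(t\log^3 t)$ vertices is the delicate heart of the matter: it rules out the naive strategy of routing $\binom t2$ internally-disjoint connector paths (which would need $\Omega(t^2\log t)$ vertices) and instead requires growing the branch sets adaptively, amortising short connections across pairs via the expansion. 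This is exactly the step that replaces the iterative density-increment argument behind \cref{t:newforced,t:SmallConn}, and I expect it to be where essentially all the difficulty lies; it is also where the $\log^3 t$ in the statement comes from (a radius of order $\log t$ against a per-scale width of order $t\log^2 t$, or a comparable trade-off). By contrast, the reduction to minimum degree, the BFS counting of Case 1, and the final appeal to Mader are all routine.
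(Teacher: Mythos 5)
Your reduction to minimum degree and your Case 1 are fine, but the proposal has a genuine gap at exactly the point you flag: the expanding case is not proved, and the sketch you give for it does not work as stated. There is no reason why $t$ pairwise-disjoint BFS balls of order $\Theta(\log^2 t)$ in a locally doubling graph of minimum degree $Ck$ should be pairwise adjacent --- local doubling of balls says nothing about edges \emph{between} two prescribed connected sets --- and ``amortising short connections across pairs via the expansion'' is precisely the $\binom{t}{2}$-adjacency problem that makes extracting clique minors from sparse hosts hard. Moreover your expansion is only guaranteed \emph{locally} (for balls of size at most $T$, around each vertex separately), so global expander-to-minor machinery is not available as a black box. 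Since being in Case 2 for every vertex is the generic situation, the main case of the theorem is left entirely open.

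It is worth noting that the paper never has to complete a $K_t$ minor in an ``expanding'' host at all. It runs a single-stage contraction process: contract disjoint connected pieces of order roughly $\log^2 t\cdot(t/k)^2$ one at a time, insisting each contraction lose at most $d/10$ edges, and take a maximal such family. Since $G$ is $K_t$-minor-free the process must get stuck, and the dense subgraph is read off from the point of failure: if the next piece $H_{m+1}$ cannot be grown to full size, then every eligible neighbour $v$ of it must, upon addition, destroy more than $d/10$ edges, which forces $v$ to have $\Omega(d)$ common neighbours with $H_{m+1}$, so the neighbourhood of $H_{m+1}$ is dense; it is also small because the growth is confined (via \cref{t:logbip} and \cref{lem:SpecialSubset}) to a set $S'$ of vertices of degree $O(d\log t)$ with few neighbours among previously contracted pieces and a constant fraction of their degree inside $S'$. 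The only minor-related input is \cref{t:density} applied to the contracted graph, not a bespoke construction of a $K_t$ minor from local expansion. Your BFS dichotomy, by contrast, pushes all of the difficulty into a construction that the proposal does not carry out.
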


For (3), the $K_t$ minor in~\cite{NPS19} is built all at once which naturally leads to the $(\log t)^{1/4}$ term in Theorem~\ref{t:ordinaryHadwiger}, which is the square root of the $\sqrt{\log t}$ factor in Theorem~\ref{t:KT}. Thus we turn to the hardest part of the proof of our main result, Theorem~\ref{t:tech}, where we build the minor in two completely new ways, sequentially and recursively. 

The utilized construction method splits according to two main cases as determined by the following key definition.

\begin{definition}
Let $s$ be a nonnegative integer. We say that a graph $G$ is \emph{$s$-chromatic-separable} if there exist two vertex-disjoint subgraphs $H_1,H_2$ of $G$ such that $\chi(H_i)\ge \chi(G)-s$ for each $i\in \{1,2\}$ and that $G$ is \emph{$s$-chromatic-inseparable} otherwise. 
\end{definition}

In Section~\ref{s:inseparable}, we prove the following lemma that covers the chromatic-inseparable case. 

\begin{restatable}{lem}{Inseparable}\label{lem:inseparable2}
There exists an integer $C=C_{\ref{lem:inseparable2}} \ge 1$ such that the following holds: Let $t\ge 3$ be an integer. Let $G$ be a graph and let
$$g_{\ref{lem:inseparable2}}(G,t) := \max_{H\subseteq G} \left\{ \frac{\chi(H)}{t}: \vert(H)\le C t \log^4 t,~H \text{ is $K_t$-minor-free }\right\}.$$ If $G$ is $Ct\cdot(1+g_{\ref{lem:inseparable2}}(G,t))$-chromatic-inseparable and $\chi(G)\ge 2\cdot Ct\cdot (1+g_{\ref{lem:inseparable2}}(G,t))$, then $G$ contains a $K_t$ minor.
\end{restatable}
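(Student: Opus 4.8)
The plan is to argue by contradiction: suppose $G$ is $Ct(1+g)$-chromatic-inseparable with $\chi(G)\ge 2Ct(1+g)$, writing $g:=g_{\ref{lem:inseparable2}}(G,t)$ and $s:=Ct(1+g)$, yet $G$ has no $K_t$ minor; I will then construct a $K_t$ model in $G$, a contradiction. Since $G$ has no $K_t$ minor, Theorem~\ref{t:KT} bounds $\chi(G)$, and hence $g$ and $s$, by $O(t\sqrt{\log t})$, so all quantities below are polynomially bounded in $t$. The model is built \emph{sequentially}: I produce pairwise-disjoint connected branch sets $B_1,B_2,\dots$ one at a time, so that $B_i$ is adjacent to every $B_j$ with $j<i$, while maintaining a ``reservoir'' $R_i:=G-(B_1\cup\cdots\cup B_i)$ with $\chi(R_i)\ge\chi(G)-s$ and a small, highly connected ``hub'' $H_i\subseteq R_i$ that anchors the current branch sets and serves to link the future ones. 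If this reaches $i=t$ we have a $K_t$ minor.

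The engine of each step is Theorem~\ref{t:SmallConn2}. Whenever $\chi(R_i)\ge\chi(G)-s\ge s\ge Ct$, the reservoir has a subgraph of minimum degree $\ge\chi(R_i)-1$, hence density $\Omega(t(1+g))$, so Theorem~\ref{t:SmallConn2} (applied to $R_i$, which is also $K_t$-minor-free, with $k\asymp t(1+g)$) yields an $\Omega(t(1+g))$-connected subgraph $H\subseteq R_i$ with $\vert(H)\le C_{\ref{t:SmallConn2}}^{\,2}\,t\log^3 t\le Ct\log^4 t$. This last inequality is exactly where the parameter $g$ enters: being $K_t$-minor-free and small, $H$ satisfies $\chi(H)\le g\cdot t\le s$ by the definition of $g_{\ref{lem:inseparable2}}$. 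Consequently, deleting such a hub — or, more generally, deleting boundedly many hubs together with forest-like branch sets (chromatic number $\le 2$ each) — from any subgraph lowers its chromatic number by only $O(s)$, since $\chi$ of a vertex-deleted subgraph drops by at most $\chi$ of the deleted part and $\chi$ is subadditive over a vertex partition. This subadditivity budget, not inseparability, is what keeps $\chi(R_i)$ above $\chi(G)-s$ through the deletions, provided the branch sets are kept simple.

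Chromatic-inseparability is invoked to rescue the one way this can break. If at some stage the \emph{used} part $U:=B_1\cup\cdots\cup B_i$ had $\chi(U)\ge\chi(G)-s$, then since $U$ and $R_i$ are vertex-disjoint, inseparability forces $\chi(R_i)<\chi(G)-s$, collapsing the reservoir; conversely, as long as we maintain $\chi(R_i)\ge\chi(G)-s$, inseparability certifies $\chi(U)<\chi(G)-s$, keeping the construction consistent, and in the borderline situation it pins down which of two candidate disjoint subgraphs still carries the chromatic number, so that we can re-root the hub there. The iteration at step $i$ then runs as follows: re-establish a hub $H_i$ via Theorem~\ref{t:SmallConn2} if the current one has eroded; using the $\Omega(t)$ connectivity of $H_i$, route a small connected gadget — short because $H_i$ has $\operatorname{poly}\log t$ vertices and high connectivity — that meets the retained port of each $B_j$ with $j<i$; take this gadget to be $B_i$; and reassign fresh ports of all branch sets inside the surviving part of the hub, which loses only $\operatorname{poly}\log t$ vertices and only $O(i)$ of its connectivity per step.

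I expect the main obstacle to be the simultaneous quantitative control of all these moving parts over all $t$ steps: that the hub's connectivity and the reservoir's chromatic number never drop below what the next application of Theorem~\ref{t:SmallConn2} needs; that the new branch set is genuinely adjacent to every earlier one and that ports can always be reassigned after deletions; and that every subgraph to which the $g$-bound is applied has at most $Ct\log^4 t$ vertices, which in particular forces the branch sets and their connecting gadgets to stay small enough that the accumulated used part has chromatic number below the $\chi(G)-s$ threshold. Equivalently, one must show that the sequential carving either runs to completion — yielding the $K_t$ minor — or exhibits a chromatic split, in which case we are simply outside the inseparable regime. The cleanest organization is probably a single induction (on $\chi(G)$, or on $t$) whose inseparable step is the carving above; getting the slack right so that $C$ can be chosen once and for all is, I expect, the most delicate point.
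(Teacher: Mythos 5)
Your high-level plan (sequential carving, hubs from Theorem~\ref{t:SmallConn2}, the $g$-bound to pay for deleting small $K_t$-minor-free pieces, inseparability to keep the reservoir alive) matches the paper's strategy in outline, but there are two genuine gaps in the execution. The first is the linking mechanism. Your new branch set $B_i$ lives in a hub found inside the \emph{current} reservoir $R_{i-1}=G-U$, which is by construction vertex-disjoint from all earlier branch sets; nothing guarantees even one edge from the new hub to $B_j$ for $j<i$, let alone $i-1$ disjoint connections, and the only connectivity you control is the hub's internal connectivity, which cannot route paths to vertices outside the hub. The paper's solution is to maintain, as an explicit invariant, a single highly-connected high-chromatic subgraph $H$ that is \emph{tangent} to the partial model (meets every branch set in exactly one vertex); all future linking is routed through $H$ via Menger/linkage arguments (Lemmas~\ref{lem:MengerVariant}, \ref{l:rooted3}, \ref{l:woven}). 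Maintaining this invariant is precisely where inseparability is used, and in a different way than you use it: after a step, the tangent subgraph $H_3$ may have lost too much chromatic number, while the fresh high-chromatic $k$-connected subgraph $H_5\subseteq G\setminus V(\mathcal{A})$ is not tangent; inseparability forces $|V(H_5)\cap V(H_3)|\ge k$, so $H_3\cup H_5$ is again $k$-connected, tangent, and has chromatic number restored to within $O(t(1+g))$ of $\chi(G)$. Your use of inseparability (certifying $\chi(U)<\chi(G)-s$) does not supply this gluing, and your claim that subadditivity alone, not inseparability, keeps $\chi(R_i)$ high is exactly backwards: without the restoration step the losses accumulate.

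The second gap is quantitative and is caused by your step granularity. You propose $t$ steps, one branch set each. Each step consumes (part of) a hub of size $\Theta(t\log^3 t)$, so over $t$ steps the union of deleted hubs can have $\Omega(t^2\log^3 t)$ vertices; the definition of $g_{\ref{lem:inseparable2}}$ only bounds the chromatic number of subgraphs on at most $Ct\log^4 t$ vertices, so you cannot charge the accumulated deletions to $g$, and a per-step loss of up to $tg$ over $t$ steps vastly exceeds the budget $\chi(G)\approx 2Ct(1+g)$. The paper avoids this by running only $\lceil\sqrt{\log t}\rceil$ super-steps, each adding $\lceil t/\sqrt{\log t}\rceil$ branch sets at once using a batch of $T$ small subgraphs whose total size stays below $C^2t\log^4 t$, and by pruning the branch sets with Lemma~\ref{lem:Steiner} so that outside a core of size $O(t\log^4 t)$ they are unions of induced paths (chromatic number $\le 2$). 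To repair your argument you would need both the tangency invariant and this coarser batching.
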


The minor in Lemma~\ref{lem:inseparable2} is built sequentially over $\lceil \log t \rceil$ steps; in each step a $K_{\lceil \frac{t}{\log t} \rceil, t}$ minor is built and then linked to the previously built minors but also via chromatic inseparability to a high chromatic, highly-connected subgraph yet unused for building the remainder of the $K_{t}$ minor. 

As for the chromatic-separable case, the $K_t$ minor is built recursively, namely by trinary recursion with a recursion depth of $O(\log \log t)$. In each level except the last, a $K_s$ minor is built by finding three vertex-disjoint high chromatic, highly-connected subgraphs, recursively constructing a $K_{2s/3}$ minor in each subgraph and then linking the three $K_{2s/3}$ minors together. The existence of three such subgraphs is guaranteed only by means of chromatic separability. As for the last level, the existence of a $K_{t/\log t}$ minor follows from Theorem~\ref{t:KT}.

The above approach, however, would assume that high-chromatic subgraphs are always chromatic-separable. Keeping the cases separate as such would then unfortunately lead to an additional $\log \log t$ multiplicative factor. To avoid this factor and prove Theorem~\ref{t:tech}, we instead use Lemma~\ref{lem:inseparable2} as a black box while using a clever trick to combine the two cases (chromatic-separable and chromatic-inseparable) into one general case (instead of doing the `always chromatic-separable' case separately). The key to overcoming this technicality is to ensure the desired linkage of ancestor nodes before invoking the chromatic-separability. We discuss this approach in more detail in Section~\ref{s:separable} before proving Theorem~\ref{t:tech} in that section.

\subsection{Outline of Paper}

In Section~\ref{s:small}, we prove results about the chromatic number of small $K_t$-minor-free graphs. In Section~\ref{s:smallConn}, we prove Theorem~\ref{t:SmallConn2}. In Section~\ref{s:prelim}, we collect a toolkit of connectivity results needed in the proofs of Lemma~\ref{lem:inseparable2} and Theorem~\ref{t:tech}. In Section~\ref{s:inseparable}, we prove Lemma~\ref{lem:inseparable2}. In Section~\ref{s:separable}, we prove Theorem~\ref{t:tech}.

\section{Coloring Small Graphs}\label{s:small}

To apply Theorem~\ref{t:tech}, we need a bound on the chromatic number of very small graphs with no $K_t$-minor. Useful for that purpose is the \emph{Hall ratio} of a graph $G$ defined to be 
$$\rho(G):=\max\left\{ \frac{\vert(H)}{\alpha(H)} : H \text{ is a non-empty subgraph of }G\right\}.$$

Note $\vert(G) \ge \rho(G)$ for every graph $G$ as $\alpha(G)\ge 1$ for every non-empty graph $G$. Here then is a useful lemma that upper bounds the chromatic number of a graph in terms of $\vert(G)$ and $\rho(G)$.

\begin{lem}\label{lem:small}
If $G$ is a non-empty graph, then 
$$\chi(G) \leq \left(2+\log \left(\frac{\vert(G)}{\rho(G)}\right)\right)\rho(G).$$
\end{lem}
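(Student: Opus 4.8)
The plan is to prove this by a greedy argument that repeatedly extracts large independent sets from $G$. The key observation is that the definition of the Hall ratio $\rho(G)$ is hereditary: for \emph{every} non-empty subgraph $H \subseteq G$ we have $\alpha(H) \ge \vert(H)/\rho(H) \ge \vert(H)/\rho(G)$. So no matter how many vertices we have deleted so far, the remaining graph still has an independent set of size at least (its order)$/\rho(G)$. We color this independent set with one new color, delete it, and repeat.

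First I would set $\rho := \rho(G)$ and $n := \vert(G)$, and run the following process: as long as the remaining graph $G_i$ is non-empty, pick an independent set $I_i$ in $G_i$ with $|I_i| \ge \vert(G_i)/\rho$, assign it color $i$, and set $G_{i+1} := G_i - I_i$. Then $\vert(G_{i+1}) \le \vert(G_i)(1 - 1/\rho) \le \vert(G_i) e^{-1/\rho}$, so after $k$ steps we have $\vert(G_k) \le n \, e^{-k/\rho}$. Once $n\, e^{-k/\rho} < 1$, i.e.\ once $k > \rho \log n$, the graph must be empty. More carefully, to get the stated constant I would run this phase only until the remaining graph has at most $\rho$ vertices: this happens once $n e^{-k/\rho} \le \rho$, i.e.\ after at most $\lceil \rho \log(n/\rho) \rceil \le \rho\log(n/\rho) + 1$ colors. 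At that point the remaining graph $G'$ has $\vert(G') \le \rho$ vertices, hence $\chi(G') \le \vert(G') \le \rho$, and we may finish with $\rho$ fresh colors.

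Adding up, the total number of colors used is at most
$$\rho \log\!\left(\frac{n}{\rho}\right) + 1 + \rho \le \left(2 + \log\!\left(\frac{\vert(G)}{\rho(G)}\right)\right)\rho(G),$$
using $\rho(G) \ge 1$ to absorb the additive $1$ into one of the $\rho$ terms. (If $n/\rho < 1$ this is vacuous, but in fact $n \ge \rho$ always, as noted in the excerpt, so $\log(n/\rho) \ge 0$ and the bound is genuine.)

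I do not expect a serious obstacle here; the only mild subtlety is bookkeeping the rounding in the number of greedy steps so that the final bound comes out with the clean constant $2$ rather than something like $3$. The cleanest way to handle this is exactly as above: stop the greedy phase as soon as the order drops to at most $\rho$ (rather than running it all the way to the empty graph), which keeps the step count at $\lceil \rho\log(n/\rho)\rceil$ and leaves a leftover graph colorable with $\rho$ colors trivially, so that the $+1$ from the ceiling and the hereditariness of $\rho$ are comfortably absorbed.
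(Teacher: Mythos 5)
Your proof is correct and is essentially the same argument as the paper's: the paper runs the identical extraction of independent sets of size at least $\vert(H)/\rho(G)$, uses the same $1-x\le e^{-x}$ estimate, and handles the leftover graph on fewer than $\rho(G)$ vertices trivially, merely packaging the iteration as a minimal-counterexample induction rather than an explicit greedy loop. The bookkeeping you describe (stopping once at most $\rho(G)$ vertices remain and absorbing the $+1$ using $\rho(G)\ge 1$) matches how the paper obtains the constant $2$.
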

\begin{proof}
Suppose not. Let $H$ be an induced non-empty subgraph of $G$ with $\vert(H)\ge \rho(G)$ such that
$$\chi(H) > \left(2+\log \left(\frac{\vert(H)}{\rho(G)}\right)\right)\rho(G),$$
and subject to that $\vert(H)$ is minimized. Note that $H$ exists since $G$ satisfies the equation and $\vert(G)\ge \rho(G)$.  By definition of Hall ratio, $\alpha(H)\ge \frac{\vert(H)}{\rho(G)}$. Hence there exists an independent set $I$ of $H$ with $|I| \ge \frac{\vert(H)}{\rho(G)}$. Let $H' = H\setminus I$. Note that $\chi(H)\le \chi(H')+1$. Moreover since $1-x \le e^{-x}$ for every $x$, we have that 
$$\vert(H') \le \left(1-\frac{1}{\rho(G)}\right) \vert(H) \le e^{-\frac{1}{\rho(G)}} \cdot \vert(H).$$ 

First suppose $\vert(H')\ge \rho(G)$. Then by the minimality of $H$, 
$$\chi(H') \leq \left(2+\log \left(\frac{\vert(H')}{\rho(G)}\right)\right)\rho(G) \le \left(2+\log \left(\frac{\vert(H)}{\rho(G)}\right) - \frac{1}{\rho(G)}\right)\rho(G) < \chi(H)-1,$$
a contradiction.

So we may assume that $\vert(H') < \rho(G)$. But then $\chi(H') \le \vert(H') < \rho(G)$. Hence $\chi(H) \le \rho(G)+1$. Since $\rho(G) \ge \rho(K_1)=1$, we find that $\chi(H)\le 2\rho(G)$, a contradiction since $2 \le 2 + \log \left(\frac{\vert(H)}{\rho(G)}\right)$ as $\vert(H)\ge \rho(G)$.
\end{proof}

\begin{cor}\label{cor:smallbound}
If $G$ is a non-empty graph and $p$ is a positive integer such that $\rho(G)\le p$, then 
$$\chi(G) \leq \left(2+\max\left\{\log \left(\frac{\vert(G)}{p}\right),~0\right\}\right)\cdot p.$$
\end{cor}
\begin{proof}
First suppose $p \ge \vert(G)$. Then $\chi(G)\le \vert(G)\le p \le 2p$ as desired. So we assume that $p < \vert(G)$. Let $f(x):= \left(2+\log \left(\frac{\vert(G)}{x}\right)\right)\cdot x$. By Lemma~\ref{lem:small}, $\chi(G) \le f(\rho(G))$. Furthermore, we note that $f'(x) = 1+\log\left(\frac{\vert(G)}{x}\right)$. Hence $f'(x)\ge 0$ for all $x\in [\rho(G),~\vert(G)]$ and hence $f$ is increasing in the interval $[\rho(G),~\vert(G)]$. Since $p\in [\rho(G),~\vert(G)]$, we find that $f(\rho(G))\le f(p)$. Hence $\chi(G)\le f(p)$ as desired. 
\end{proof}

A classical theorem of Duchet and Meyniel~\cite{DucMey82} from 1982 bounds the Hall ratio of $K_t$-minor-free graphs as follows.

\begin{thm}[\cite{DucMey82}]\label{t:DucMey}
If $G$ is a $K_t$-minor-free graph, then $\rho(G) \le 2(t-1)$.
\end{thm}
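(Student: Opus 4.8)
The plan is to prove the stronger statement that every graph $G$ with no $K_t$ minor has a maximal connected subgraph cover that certifies $\rho(G)\le 2(t-1)$; concretely, I will show that every graph $G$ admits a partition of $V(G)$ into connected subgraphs such that the number of parts is at most $2\alpha(G)\cdot(\text{something})$—but the cleanest route is the classical greedy one. First I would fix a maximal collection $\mathcal{X}=\{X_1,\dots,X_m\}$ of pairwise vertex-disjoint connected subgraphs of $G$ that is \emph{maximal} with respect to the property that each $X_i$ is connected and $V(\mathcal{X})=V(G)$ (i.e. a partition into connected pieces), chosen so that the contracted graph $G/\mathcal{X}$ on the parts is a complete graph—equivalently, take a maximal partition into connected pieces that are pairwise adjacent. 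Since $G$ has no $K_t$ minor, any such ``clique partition'' into connected adjacent pieces has at most $t-1$ parts. The heart of the argument is then a counting/greedy step bounding how many vertices can be absorbed into $m\le t-1$ connected, pairwise-adjacent pieces.

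The key step I would carry out: start from a maximum independent set, no—start instead from an arbitrary vertex and grow greedily. Order $V(G)=\{v_1,\dots,v_n\}$ and build the pieces $X_1,\dots,X_m$ one vertex at a time: process vertices in order, and when considering $v$, if $v$ has a neighbour already placed in some piece $X_j$, add $v$ to (one such) $X_j$; otherwise start a new piece $\{v\}$. This keeps every piece connected and ensures the ``start a new piece'' vertices form an independent set, so the number of pieces is at most $\alpha(G)$ at the end of this naive pass—but the pieces need not be pairwise adjacent. To fix adjacency, after the pass, repeatedly merge along edges: while two pieces are adjacent, one can try to reduce count, but that does not immediately work. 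The correct classical trick (Duchet–Meyniel) is the two-round argument: build a maximal collection of connected pieces that are pairwise adjacent and \emph{cover} $V(G)$, using at each step a BFS layering so that each newly started piece's root is non-adjacent to all earlier roots \emph{and} each piece has the property that at least half its vertices lie in layers that could have extended an earlier piece. I would then argue $n=\vert(G)\le 2(t-1)\alpha(G)$ by: let $I$ be the set of roots; $|I|\le t-1$ since the pieces form a $K_{|I|}$ minor; and show $|I|\ge \alpha(G)$ is false in general, so instead directly show $\vert(G)\le 2|I|\cdot \alpha(G)$ is too weak—rather, bound $\vert(G)/\alpha(G)$.

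Let me restate the actual Duchet–Meyniel argument I would use. Take a model $\mathcal{X}=\{X_1,\dots,X_m\}$ of $K_m$ in $G$ with $m$ maximum; then $m\le t-1$. Among all such maximum models, choose one with $|V(\mathcal{X})|$ maximum. Claim: $V(G)\setminus V(\mathcal{X})$ induces a graph whose every vertex has neighbours in at most $m-1$ of the $X_i$ (else we could grow a new branch set from it, contradicting maximality of $m$)—wait, a single vertex $v$ adjacent to all $m$ branch sets would give $K_{m+1}$; so every $v\notin V(\mathcal{X})$ misses some $X_i$. More strongly, by maximality of $|V(\mathcal{X})|$, for each $v\notin V(\mathcal{X})$ the set of indices $i$ with $X_i$ adjacent to $v$ has size at most $m-1$, and we may greedily pick for each such $v$ an index $\beta(v)$ with $X_{\beta(v)}$ \emph{not} adjacent to $v$; then among $V(G)\setminus V(\mathcal{X})$ the pieces $\{v: \beta(v)=i\}$ — hmm, these need not be independent. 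Here is the clean finish: for each $i\in[m]$ pick a spanning tree $T_i$ of $X_i$; root it; a standard fact is that a tree on $k$ vertices has an independent set of size $\ge k/2$ \emph{that is also dominating}. Take $J_i\subseteq V(X_i)$ independent and dominating in $X_i$ with $|J_i|\ge |V(X_i)|/2$. Then $J:=J_1\cup\dots\cup J_m$ together with $V(G)\setminus V(\mathcal{X})$: actually the set $\bigcup J_i$ is not independent across different $i$ (the $X_i$ are pairwise adjacent). The resolution: use only \emph{one} clever independent set. Let $U=V(G)\setminus V(\mathcal{X})$; greedily 2-color...

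I will commit to the following concise plan, which is the textbook proof. Choose a model of $K_m$ with $m=t-1$ impossible, so maximum $m\le t-1$; subject to maximum $m$, maximize $|V(\mathcal X)|$; then every vertex outside lies in the ``closed neighbourhood'' structure so that $V(G)=N[V(X_{i_0})]$ for the branch set with an independent dominating subset of size $\ge |V(X_{i_0})|/2$ — no. The main obstacle, and where I would spend the effort, is precisely this: producing from the maximum model a single large independent set $I$ with $\vert(G)\le 2m\,|I|$ — equivalently showing $\alpha(G)\ge \vert(G)/(2m)\ge \vert(G)/(2(t-1))$. I expect to handle it by: taking spanning trees $T_i$ of each $X_i$, using the bipartition of each tree into two independent sets and keeping the larger one $A_i$ (so $|A_i|\ge|V(X_i)|/2$), then observing that by the maximality of $|V(\mathcal X)|$ no vertex of $G$ outside $V(\mathcal X)$ is adjacent to all of $X_1,\dots,X_m$ simultaneously in the ``tree'' sense, and running a final greedy selection across $\bigcup_i A_i \cup U$ to extract an independent set of the required size — the delicate point being to show the loss is only a factor $2m$ and not more. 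This counting is exactly the crux of Duchet–Meyniel and is where the $2(t-1)$ (rather than $t-1$) comes from.
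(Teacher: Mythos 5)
The paper states this result only as a cited classical theorem of Duchet and Meyniel and gives no proof of its own, so there is nothing internal to compare against; judged on its own merits, your proposal has a genuine gap. What you have written is a sequence of partial attempts, each of which you yourself abandon, and the one concrete mechanism you finally commit to is incorrect: the two colour classes of a spanning tree $T_i$ of a branch set $X_i$ are independent \emph{in the tree} but not in $G[V(X_i)]$ (take $X_i$ to be a clique), so the sets $A_i$ with $|A_i|\ge \vert(X_i)/2$ need not be independent sets of $G$ at all, and no ``final greedy selection'' across $\bigcup_i A_i\cup U$ can be expected to recover an independent set of size $\vert(G)/(2m)$ from them. You correctly identify that the crux is to show $\alpha(G)\ge \vert(G)/(2(t-1))$, but you explicitly leave that crux unproved (``I expect to handle it by\dots'', ``the delicate point being\dots''), so the proposal does not constitute a proof.

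The missing idea is the connected dominating set lemma, which is where the factor $2$ actually comes from: every non-empty connected graph $H$ has a \emph{connected} dominating set $D$ with $|D|\le 2\alpha(H)-1$. One grows a tree $T$ from a single vertex and, as long as $T$ is not dominating, picks a vertex $u$ at distance exactly two from $T$ and adds $u$ together with a common neighbour of $u$ and $V(T)$; the vertices $u$ chosen this way form an independent set, and each costs two new vertices of $T$. The theorem then follows by induction on $\vert(G)$ (reducing to connected $G$, and distributing over components via the mediant inequality): extract such a $D$ as the first branch set, note that $D$ dominates and hence is adjacent to every branch set of any clique minor of $G\setminus D$, and apply induction to the components of $G\setminus D$; this yields a clique minor of order at least $\vert(G)/(2\alpha(G)-1)$. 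Since $G$ is $K_t$-minor-free, $\vert(G)\le (t-1)(2\alpha(G)-1)\le 2(t-1)\alpha(G)$, and applying this to every subgraph bounds $\rho(G)$. Your ``take a maximum model and then extract an independent set'' direction is essentially backwards: the argument should construct a large clique minor from the assumption that $\alpha$ is small, via dominating sets, rather than try to read a large independent set off a maximum model.
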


We note that Fox~\cite{Fox10} in 2010 was the first to improve the multiplicative factor of $2$ in Theorem~\ref{t:DucMey}, while building on those techniques, the best current bound to date is due to Balogh and Kostochka~\cite{BK11} from 2011.

We are now ready to derive Theorem~\ref{thm:ordinaryHadwiger3}.

\begin{proof}[Proof of Theorem~\ref{thm:ordinaryHadwiger3}.]
Let $C:= 15 \cdot C_{\ref{t:tech}}^2$. We may assume that $t\ge 4$ since Hadwiger's conjecture holds for $t\le 3$. Let $G$ be a $K_t$-minor-free graph. It suffices to show that $\chi(G)\le C\cdot t\cdot \log\log t$. By Theorem~\ref{t:tech} applied to $G$, we find that $\chi(G)\le C_{\ref{t:tech}}\cdot t\cdot (1+f_{\ref{t:tech}}(G,t))$. 

We now upper bound $f_{\ref{t:tech}}(G,t)$ as follows. Let $a$ be an integer such that $t \ge a\ge \frac{t}{\sqrt{\log t}}$. Note that since $t\ge 4$, we have that $\frac{t}{\sqrt{\log t}}\ge 3$ and hence $a\ge 3$. Let $H$ be a $K_a$-minor-free graph with $\vert(H) \le C_{\ref{t:tech}} \cdot a\cdot \log^4 a$. Let $p:=2(a-1)$. By Theorem~\ref{t:DucMey}, $\rho(H)\le 2(a-1)=p$. Note that since $a\ge 3$, we have $p\ge a$.

Hence by Corollary~\ref{cor:smallbound}, 
$$\chi(H) \le  \left(2+\max\left\{\log \left(\frac{\vert(H)}{p}\right),~0\right\}\right)\cdot p \le (2+\log(C_{\ref{t:tech}} \cdot \log^4 a)) \cdot 2a \le 14 \cdot C_{\ref{t:tech}} \cdot a \cdot \log \log a.$$ 
Since $\log \log a$ is an increasing function in $a$ for all $a\ge 3$, we find that 
$$\frac{\chi(H)}{a} \le 14 \cdot C_{\ref{t:tech}} \cdot \log \log t$$
and hence $f_{\ref{t:tech}}(G,t) \le 14 \cdot C_{\ref{t:tech}} \cdot \log \log t$. Thus $\chi(G) \le 15 \cdot C_{\ref{t:tech}}^2 \cdot t \cdot \log \log t = C \cdot t\cdot \log \log t$ as desired.
\end{proof}

As for Corollary~\ref{cor:HadwigerSubLogClique}, we use the following result of Molloy~\cite{Molloy19} from 2019.

\begin{thm}[\cite{Molloy19}]\label{t:Molloy}
For every integer $r\ge 4$, if $G$ is a $K_r$-free graph of maximum degree at most $\Delta$, then $\chi(G) \le 200 \cdot r \cdot \Delta \cdot \frac{\log\log \Delta}{\log \Delta}$.
\end{thm}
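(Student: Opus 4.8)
The plan is to prove the (formally stronger) list-coloring version: there is an absolute constant so that every $K_r$-free graph $G$ with maximum degree at most $\Delta$ equipped with lists of size $L := 200\, r\, \Delta\, \frac{\log\log\Delta}{\log\Delta}$ at every vertex has a proper coloring from its lists. One may assume $\Delta$ exceeds an absolute constant (else $L \ge \Delta+1$ and a greedy coloring works) and, by a routine reduction, that every list has size exactly $L$. The overall strategy is the semi-random (``nibble'') method: starting from the empty coloring, repeatedly apply a randomized partial-coloring step that colors a small fraction of the currently uncolored vertices while keeping control of two families of parameters.

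During the iteration I would track, for each uncolored vertex $v$, its current list size $\ell_v$ (colors not yet used on a colored neighbor), and for each colour $c \in L_v$ its \emph{colour degree} $d_{v,c} := |\{u \in N(v) : u \text{ uncolored},\ c \in L_u\}|$. The invariant to maintain is that $\ell_v$ stays, up to a $(1+o(1))$ factor and after normalization, above $\max_{c\in L_v} d_{v,c}$, so that once all colour degrees have dropped below, say, $\ell_v/2$ the process can be finished by a single application of the Lov\'asz Local Lemma: each uncolored vertex then has strictly more available colours than uncolored neighbours, with enough slack for the dependency condition. Each partial-coloring step activates every uncolored vertex independently with a suitable probability $p = \Theta(1/\log\Delta)$, gives it a uniformly random colour from its list, keeps that colour permanently if no activated neighbour picked the same colour, and then deletes each newly used colour from the lists of its neighbours.

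The crux of the argument — and the only place $K_r$-freeness is used — is the one-step analysis of how fast $\ell_v$ and $d_{v,c}$ shrink. A colour $c$ leaves $L_v$ precisely when some neighbour of $v$ is permanently coloured $c$. When $r=3$ the neighbourhood $N(v)$ is an independent set, so distinct neighbours of $v$ carrying $c$ do not block one another and, in expectation, a constant number of them actually receive $c$; this balance between the decay of $\ell_v$ and of $d_{v,c}$ is exactly what permits $L$ of order $\Delta/\log\Delta$ rather than $\Delta$. For general $K_r$-free $G$, the graph $G[N(v)]$ is $K_{r-1}$-free, so by Tur\'an's theorem its edge-density is at most $1 - \tfrac{1}{r-2}$; feeding the Caro--Wei bound into the set of $c$-carrying neighbours of $v$ shows a $\Omega(1/r)$ fraction of them still succeed without blocking each other, and the same balance survives at the cost of a multiplicative $O(r)$ in the list size. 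One then computes the expectations of $\ell_v$ and $d_{v,c}$ by linearity (lower-bounding the former via this Tur\'an estimate) and proves concentration with Talagrand's inequality: the outcome of a step is a function of the independent colour choices made in it, a single choice perturbs $\ell_v$ or $d_{v,c}$ by $O(1)$, and any large deviation is certified by a small set of choices, yielding exponential tails strong enough to survive a union bound (or a Local Lemma argument) over all vertices and all $O(\log\Delta)$ steps.

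The step I expect to be the main obstacle is precisely this concentration-and-propagation bookkeeping: $d_{v,c}$ depends on the list sizes of $v$'s $c$-carrying neighbours, which are themselves random, so the invariants must be engineered to be self-sustaining under the one-step estimates, and Talagrand's inequality must be applied with certificates small enough that the failure probability beats the number of iterations — this is the technically delicate heart of the Johansson and Molloy arguments. The $K_r$-free case adds the wrinkle that the Tur\'an/Caro--Wei lower bound on the number of ``successful'' neighbours must be shown robust to the conditioning introduced by the nibble. An alternative route, trading the nibble's concentration machinery for a counting argument, is Molloy's entropy-compression proof: run a randomized greedy colouring that recolours vertices when conflicts arise and show that an execution lasting too long would let one reconstruct its own random input from a record shorter than the input's entropy, a contradiction; there the neighbourhood sparsity enters through bounding the number of distinct ``conflict records''.
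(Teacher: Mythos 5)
The paper does not prove Theorem~\ref{t:Molloy}: it is quoted verbatim from Molloy's paper \cite{Molloy19} and used as a black box (in the derivation of Corollary~\ref{cor:SmallCliqueSmallRho}), so the only meaningful comparison is with Molloy's own proof (entropy compression) and Johansson's earlier semi-random proof --- which are indeed the two frameworks you describe at the level of overall architecture.

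However, your sketch has a genuine gap at precisely the step you call the crux, namely where $K_r$-freeness enters. First, the mechanism you propose does not compute what you need: a $c$-carrying neighbour $u$ of $v$ retains $c$ only if no activated neighbour of $u$ also chose $c$, so the set of neighbours of $v$ that successfully keep $c$ is the set of \emph{isolated} vertices of the relevant conflict graph, not an independent set you are free to select; a $K_{r-1}$-free conflict graph (e.g.\ a perfect matching) can have no isolated vertices at all. Moreover, even if an independent-set selection rule were built into the nibble, Tur\'an plus Caro--Wei applied to a $K_{r-1}$-free graph on $m$ vertices only guarantees an independent set of size about $\frac{r-2}{r-3}=O(1)$ (the average degree can be $(1-\frac{1}{r-2})m$), not an $\Omega(1/r)$ fraction of $m$. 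The handle on $G[N(v)]$ that actually makes the known proofs work is a Shearer-type lower bound on the number of independent sets (equivalently, the entropy of a uniformly random proper colouring of the $K_{r-1}$-free neighbourhood, or the local occupancy of the hard-core model on it), and establishing and propagating that estimate is the real technical core of the $K_r$-free case. Second, and symptomatically, your accounting loses only a multiplicative $O(r)$ and never produces or uses the $\log\log\Delta$ factor; if the one-step balance really survived with only an $O(r)$ loss, the identical argument would prove $\chi(G)=O(r\Delta/\log\Delta)$, which is strictly stronger than the state of the art and is a well-known open problem already for $r=4$. The $\log\log\Delta$ in the statement is forced by exactly the Shearer-type counting your sketch omits, so the invariants as you have set them up would not close.
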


Combined with Theorem~\ref{t:KT}, this has the following corollary.

\begin{cor}\label{cor:SmallCliqueSmallRho}
There exists $C=C_{\ref{cor:SmallCliqueSmallRho}} \ge 1$ such that the following holds: If $G$ is a $K_t$-minor-free graph, then 
$$\rho(G) \le C\cdot \omega(G) \cdot t \cdot \frac{\log \log t}{\sqrt{\log t}}.$$
\end{cor}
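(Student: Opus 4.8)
The plan is to prove the bound on $\rho(G)$ by bounding the ratio $\vert(H)/\alpha(H)$ for an arbitrary non-empty subgraph $H$ of $G$, since $\rho(G)$ is the maximum of this quantity over all such $H$. Fix such an $H$; note that $H$ is also $K_t$-minor-free and $\omega(H)\le \omega(G)$. We want to find a large independent set in $H$, i.e.\ we want $\alpha(H)\ge \vert(H)/\left(C\cdot\omega(G)\cdot t\cdot \frac{\log\log t}{\sqrt{\log t}}\right)$.

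The main idea is to combine the degeneracy bound from Theorem~\ref{t:KT} with Molloy's bound from Theorem~\ref{t:Molloy}. First I would use Theorem~\ref{t:KT}: since $H$ is $K_t$-minor-free, it is $O(t\sqrt{\log t})$-degenerate, so $H$ has a subgraph $H'$ (obtained by repeatedly deleting vertices of degree exceeding $2\,\d(H')$, or more simply by a standard argument) with $\vert(H')\ge \vert(H)/2$ and maximum degree $\Delta(H')\le C_1\, t\sqrt{\log t}$ for an absolute constant $C_1$; indeed a $d$-degenerate graph has at most $dn$ edges, so at least half its vertices have degree at most $4d$, and we may take $H'$ to be the subgraph induced on those vertices. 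Any independent set of $H'$ is an independent set of $H$, so it suffices to lower-bound $\alpha(H')$. Now apply Theorem~\ref{t:Molloy} to $H'$ with $r=\max\{\omega(G)+1,4\}$: since $H'$ is $K_r$-free (as $\omega(H')\le\omega(G)<r$) with maximum degree at most $\Delta:=C_1 t\sqrt{\log t}$, we get
$$\chi(H')\le 200\cdot r\cdot \Delta\cdot \frac{\log\log\Delta}{\log\Delta}.$$
Since $\alpha(H')\ge \vert(H')/\chi(H')$, this yields
$$\alpha(H') \ge \frac{\vert(H')}{200\,r\,\Delta}\cdot\frac{\log\Delta}{\log\log\Delta}.$$

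It remains to simplify. Plugging in $\Delta = C_1 t\sqrt{\log t}$, we have $\log\Delta = \Theta(\log t)$ and $\log\log\Delta = \Theta(\log\log t)$ (for $t$ bounded away from small values; small $t$ can be absorbed into the constant $C$ since $\rho(G)\le \vert(G)$ is always finite and, for fixed small $t$, the claimed bound is bounded below by a positive constant). Therefore
$$\alpha(H') \ge \frac{\vert(H')}{200\,r\,C_1\,t\sqrt{\log t}}\cdot \frac{c_2\log t}{\log\log t} = c_3\cdot\frac{\vert(H')\,\sqrt{\log t}}{r\,t\,\log\log t}$$
for absolute constants $c_2,c_3>0$. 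Using $\vert(H')\ge \vert(H)/2$ and $r\le 2\omega(G)+2\le 4\omega(G)$ (valid once $\omega(G)\ge 1$; and if $\omega(G)=0$ then $H$ is empty), we conclude $\alpha(H)\ge \alpha(H')\ge c_4\cdot \vert(H)\sqrt{\log t}/(\omega(G)\,t\,\log\log t)$, i.e.\ $\vert(H)/\alpha(H)\le C\cdot\omega(G)\cdot t\cdot\frac{\log\log t}{\sqrt{\log t}}$ for a suitable absolute constant $C=C_{\ref{cor:SmallCliqueSmallRho}}$. Taking the maximum over all non-empty $H\subseteq G$ gives the desired bound on $\rho(G)$.

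I do not expect any serious obstacle here; the only points requiring care are (a) passing from $K_t$-minor-free (degeneracy bound on all of $H$) to a bounded-maximum-degree subgraph without losing more than a constant fraction of the vertices, which is handled by the standard "half the vertices have small degree" observation, and (b) handling the regime of small $t$ (and small $\omega(G)$, e.g.\ $r<4$ forcing $r=4$ in Molloy's theorem), which only affects the absolute constant. The asymptotic simplification of $\log\Delta$ and $\log\log\Delta$ in terms of $\log t$ and $\log\log t$ is routine.
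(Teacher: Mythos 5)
Your proposal is correct and follows essentially the same route as the paper: pass to an induced subgraph on at least half the vertices with maximum degree $O(t\sqrt{\log t})$ via the Kostochka--Thomason density bound, apply Molloy's theorem with $r=\max\{\omega+1,4\}$, and extract an independent set of size $\vert(H')/\chi(H')$. The only quibble is that justifying the small-$t$ regime via ``$\rho(G)\le\vert(G)$ is finite'' is not quite right since $\vert(G)$ is unbounded; one should instead note that $\rho(G)=O(t\sqrt{\log t})$ (e.g.\ from the degeneracy bound or Duchet--Meyniel), which is a constant for each fixed small $t$.
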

\begin{proof}
Let $C:=192000$. Let $H$ be a non-empty induced subgraph of $G$. Since $H$ is $K_t$-minor-free, we find by a more explicit version of Theorem~\ref{t:KT}, namely Theorem~\ref{t:density}, that $H$ has average degree $d\le 2\cdot \d(H) \le 60 \cdot t \cdot \sqrt{\log t}$. At least half of the vertices of $H$ have degree at most $2d$ and hence there exists $S\subseteq V(H)$ with $|S|\ge \vert(H)/2$ such that the maximum degree of $H[S]$ is at most $\Delta := 2\cdot (60 \cdot t \cdot \sqrt{\log t})$. Let 
$$r:=\max\{\omega(H) + 1, 4\} \le 4\cdot \omega(H) \le 4\cdot \omega(G).$$ 
Since $H$ is $K_r$-free as $r>\omega(H)$, we find by Theorem~\ref{t:Molloy} that 
$$\chi(H[S]) \le 200 \cdot r \cdot \Delta \cdot \frac{\log\log \Delta}{\log \Delta} \le 96000 \cdot \omega(G) \cdot t\sqrt{\log t} \cdot \frac{ \log\log t}{\log t}.$$ Hence there exists $I \subseteq S$ such that $I$ is an independent set in $H$ and 
$$|I| \ge \frac{|S|}{\chi(H[S])} \ge \frac{\vert(H)}{2\cdot \chi(H[S])}.$$
But then
$$\frac{\vert(H)}{|I|} \le 2\cdot \chi(H[S]) \le 192000 \cdot \omega(G) \cdot t \cdot \frac{ \log\log t}{\sqrt{\log t}}.$$
Since $\frac{\vert(H)}{\alpha(H)} \le \frac{\vert(H)}{|I|}$ and $H$ is arbitrary, it follows that $\rho(G)$ is as desired.
\end{proof}

We are now ready to derive Corollary~\ref{cor:HadwigerSubLogClique}.

\begin{proof}[Proof of Corollary~\ref{cor:HadwigerSubLogClique}.]
Let $C := 77\cdot C_{\ref{t:tech}}^2\cdot C_{\ref{cor:SmallCliqueSmallRho}}$. Let $a$ be an integer such that $t\ge a \ge \frac{t}{\sqrt{\log t}}$. Let $H$ be a $K_a$-minor-free graph with $\vert(H) \le C_{\ref{t:tech}} \cdot a \cdot \log^4 a$ and $\omega(H) \le \frac{\sqrt{\log t}}{(\log\log t)^2}$. Let $p:= 2\cdot C_{\ref{cor:SmallCliqueSmallRho}} \cdot \frac{a}{\log \log t}$. By Corollary~\ref{cor:SmallCliqueSmallRho},
$$\rho(H) \le C_{\ref{cor:SmallCliqueSmallRho}}\cdot \omega(H) \cdot a \cdot \frac{\log \log a}{\sqrt{\log a}} \le 2\cdot C_{\ref{cor:SmallCliqueSmallRho}} \cdot \frac{a}{\log \log t} = p,$$
since $\log a \ge \frac{\log t}{2}$ as $a \ge \frac{t}{\sqrt{\log t}}$.
Hence by Corollary~\ref{cor:smallbound}, we obtain that 
\begin{align*}
\chi(H) &\le \left(2+\max\left\{\log \left(\frac{\vert(H)}{p}\right),~0\right\}\right)\cdot p\\
&\le \left(2+\log\left(\frac{ C_{\ref{t:tech}} \cdot a \cdot \log^4 a}{2\cdot C_{\ref{cor:SmallCliqueSmallRho}} \cdot \frac{a}{\log \log t}} \right)\right) \cdot 2\cdot C_{\ref{cor:SmallCliqueSmallRho}} \cdot \frac{a}{\log \log t} \\
&\le (2+\log(C_{\ref{t:tech}} \cdot \log^5 t)) \cdot 2\cdot C_{\ref{cor:SmallCliqueSmallRho}} \cdot \frac{a}{\log \log t} \\
&\le 76\cdot C_{\ref{t:tech}}\cdot C_{\ref{cor:SmallCliqueSmallRho}} \cdot a.
\end{align*}
It follows that $f_{\ref{t:tech}}(G,t) \le 76\cdot C_{\ref{t:tech}}\cdot C_{\ref{cor:SmallCliqueSmallRho}}$.  By Theorem~\ref{t:tech} applied to $G$, we find that 
$$\chi(G) \le C_{\ref{t:tech}}\cdot t\cdot (1+f_{\ref{t:tech}}(G,t)) \le 77\cdot C_{\ref{t:tech}}^2\cdot C_{\ref{cor:SmallCliqueSmallRho}} \cdot t = Ct$$ 
as desired.
\end{proof}

We note that the best-known bound on the Hall ratio of $K_r$-free $K_t$-minor-free graphs is $O\left(t^{(10r-21)/(10r-20)}\right)$ by a very recent result of Buci\'c, Fox and Sudakov~\cite{BFS20} who improved and generalized the recent work of Dvo\v{r}\'ak and Yepremyan~\cite{DY19} who proved a bound of $O(t^{26/27})$ for the Hall ratio of triangle-free $K_t$-minor-free graphs.

\section{Small Dense Subgraphs}\label{s:smallConn}

In this section, we prove Theorem~\ref{t:SmallConn2} which proves the existence in a $K_t$-minor-free graph $G$ of a small dense subgraph with only a constant loss in density. 

\subsection{Proof Overview}

Recall that we use a new approach where we attempt to find a $K_t$ minor in only one stage and when that fails, we find a small dense subgraph.  More specifically, we successively contract components with roughly $\log^2 t$ vertices as long as not too many are edges are lost (say $d/10$ per contraction) to obtain a new graph $G'$. The key idea in the proof is to limit the next contraction to a special non-empty set $S'$ of vertices such that each vertex of $S'$ has
\begin{itemize}
\item[(1)] not too large degree in $G'$ (roughly $d \cdot \log t)$, 
\item[(2)] not too many neighbors (say $d/20$) among the previously contracted components, 
\item[(3)] many neighbors in $S'$ (say $2d/5$), and 
\item[(4)] a linear proportion (say $1/3$) of its degree in $G'$ in $S'$. 
\end{itemize}
Thus when the contraction process fails (which it inevitably does as $G$ is $K_t$-minor-free) for some new component $x$, each neighbor of $x$ in $S'$ will by (2) and (3) have many common neighbors with $x$ among the uncontracted vertices; yet by (4), it can be shown that $x$ has a linear proportion of its degree in $G'$ in $S'$. By (1), the number of neighbors of $x$ is $O(d\cdot \log^3 t)$ and hence the uncontracted neighbors of $x$ form the desired small dense subgraph. 

The existence of a set $S'$ all of whose vertices have properties (3) and (4) is the consequence of a remarkable little lemma below (Lemma~\ref{lem:SpecialSubset}). To guarantee properties (1) and (2), we need to handle bipartite subgraphs; instead of handling bipartite subgraphs as a separate case as in the proof of Theorem~\ref{t:newforced}, we exploit the following unbalanced bipartite density theorem of $K_t$-minor-free graphs by Norin and the second author~\cite{NorPos20}.

\begin{thm}\label{t:logbip}
	There exists $C=C_{\ref{t:logbip}}\ge 1$ such that for every integer $t \geq 3$ and every bipartite graph $G$ with bipartition $(S,T)$ and no $K_t$ minor, we have
	\begin{equation*}
	\e(G) \le C t\sqrt{\log t} \sqrt{|S||T|}  + (t-2)\vert(G).
	\end{equation*}	 
\end{thm}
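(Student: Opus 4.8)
The plan is to prove the unbalanced bipartite density bound by reducing to a near-balanced situation and then invoking the Kostochka--Thomason bound (Theorem~\ref{t:KT}) in the form of an explicit density statement. First I would set up notation: let $G$ be bipartite with parts $S,T$, say with $|S|\le |T|$, and assume $G$ has no $K_t$ minor. We want $\e(G) \le Ct\sqrt{\log t}\,\sqrt{|S||T|} + (t-2)\vert(G)$. The case $|S|$ large, say $|S|\ge |T|/2$, is essentially the balanced case: here $\sqrt{|S||T|} = \Theta(|T|) = \Theta(\vert(G))$, and since $G$ has no $K_t$ minor, Theorem~\ref{t:KT} (or a clean explicit version of it) gives $\e(G) = O(t\sqrt{\log t})\,\vert(G)$, which is dominated by the first term. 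So the real content is when $|S| \ll |T|$.

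For the unbalanced case the key idea is a \emph{random partition / blow-up} trick: partition $T$ into $\lceil |T|/|S| \rceil$ groups $T_1,\dots,T_m$ each of size roughly $|S|$, and consider the auxiliary bipartite graphs $G(S,T_j)$. Each $G(S,T_j)$ is near-balanced, so by the balanced case each has at most $C't\sqrt{\log t}\,|S| + (t-2)(|S|+|T_j|)$ edges. Summing over $j$ gives $\e(G) \le m\cdot C't\sqrt{\log t}\,|S| + (t-2)(m|S| + |T|)$. Now $m|S| \approx |T|$ and $m|S| = |S|\cdot\frac{|T|}{|S|}$, but to get the stated bound we need the first term to be $O(t\sqrt{\log t}\sqrt{|S||T|})$, i.e.\ we need $m|S| = O(\sqrt{|S||T|})$, which is \emph{false} when $|S|\ll|T|$ ($m|S|\approx|T|\gg\sqrt{|S||T|}$). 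So the naive partition loses too much, and the error term $(t-2)m|S|\approx (t-2)|T|$ is fine but the main term is not. This tells me the partition must instead be used to \emph{amortize} differently: one should contract each group $T_j$ (or a well-chosen connected piece inside it together with part of $S$) to build a minor, so that a graph that is too dense yields a $K_t$ minor directly. Concretely, I would mimic the Kostochka--Thomason argument adapted to the bipartite setting: repeatedly find a small connected subgraph using few vertices of $S$ and many of $T$ (or vice versa), contract it, track the edge loss, and show that if $\e(G)$ exceeds the claimed bound then enough branch sets of the right adjacency pattern survive to force $K_t$. The asymmetric counting — charging contractions on the $T$ side but measuring density against $\sqrt{|S||T|}$ — is exactly what produces the geometric-mean term rather than a term linear in $|T|$.

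The step I expect to be the main obstacle is precisely this edge-counting bookkeeping in the contraction process: one must choose the size of the contracted pieces (something like $\Theta(|S|/t)$ vertices, or tuned to make the $\sqrt{\log t}$ appear) so that (a) each contraction destroys only $O(t\sqrt{\log t})$ edges per vertex removed on average, (b) the pieces remain connected and pairwise adjacent so the contracted graph has a $K_t$ minor once it is dense enough, and (c) the accounting, when run to completion, yields a main term scaling like $\sqrt{|S||T|}$ rather than $|T|$. This is where the delicate choice of parameters lives, and it is the part I would not attempt to push through without careful calculation; the high-level structure (balanced base case via Theorem~\ref{t:KT}, then an asymmetric contraction argument to handle the unbalanced regime) is, however, clear. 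Since the paper attributes this to Norin and Postle~\cite{NorPos20}, I would cite that argument for the technical core and only sketch the reduction here.
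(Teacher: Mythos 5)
First, a point of comparison: this paper does not prove Theorem~\ref{t:logbip} at all --- it is imported as a black box from Norin and Postle~\cite{NorPos20} and used only in the proof of Theorem~\ref{t:SmallConn2}. So your closing move of citing \cite{NorPos20} for the technical core is exactly what the authors themselves do, and to that extent your proposal is consistent with the paper's treatment of the statement.

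As a standalone proof attempt, however, what you have written is a sketch with an acknowledged hole at precisely the step that carries all the difficulty. Your diagnosis is sound: the near-balanced case $|S|=\Theta(|T|)$ does follow from an explicit form of Theorem~\ref{t:KT} (e.g.\ Theorem~\ref{t:density}), and you are right that partitioning $T$ into blocks of size about $|S|$ and summing the balanced bound yields a main term of order $t\sqrt{\log t}\,|T|$ rather than $t\sqrt{\log t}\sqrt{|S||T|}$, so the naive reduction cannot work. But the replacement you propose --- ``mimic the Kostochka--Thomason contraction argument with asymmetric charging'' --- is stated only as a goal: the choice of the sizes of the contracted pieces, the per-contraction edge-loss accounting, and the mechanism by which the geometric mean $\sqrt{|S||T|}$ emerges are all left open, and you say explicitly that you would not push them through. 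Those are not routine details; they are the entire content of the theorem, and the $\sqrt{|S||T|}$ scaling in particular does not fall out of a generic adaptation of the unipartite argument without a genuinely new counting idea. So the verdict is: no error, the high-level shape you describe is plausible and your instinct to defer to \cite{NorPos20} matches the paper, but as a proof the proposal has a genuine gap at its central step, one you have at least correctly located.
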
	 

\subsection{Proof of Theorem~\ref{t:SmallConn2}}

Before we prove Theorem~\ref{t:SmallConn2}, here is that remarkable little lemma that holds for all graphs.

\begin{lem}\label{lem:SpecialSubset}
Let $r>2$ and $\delta >0$. If $G$ is a graph and $S$ is a subset of $V(G)$ such that
$$(r-2)\cdot \e(G[S]) > (r-1)\cdot \delta |S| + \e(G(S,V(G)\setminus S)),$$
then there exists a nonempty subset $S'$ of $S$ such that both of the following hold:
\begin{itemize}
\item[(i)] the minimum degree of $G[S']$ is at least $\delta$, and
\item[(ii)] $|N_G(v)\cap S'| \ge \frac{\deg_G(v)}{r}$ for every $v\in S'$.
\end{itemize}  
\end{lem}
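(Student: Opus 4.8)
The plan is to obtain $S'$ by an iterative deletion (cleaning) argument: repeatedly delete vertices that violate either (i) or (ii), and show that the hypothesis guarantees the process cannot delete everything. Concretely, start with $S_0 = S$ and, as long as the current set $S_i$ is nonempty and some vertex $v \in S_i$ has $\deg_{G[S_i]}(v) < \delta$ or $|N_G(v) \cap S_i| < \deg_G(v)/r$, pick such a $v$ and set $S_{i+1} = S_i \setminus \{v\}$. The process terminates; if it terminates with a nonempty set, that set is the desired $S'$, since it satisfies both (i) and (ii) by construction. So the entire content is to rule out the process deleting all of $S$.

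First I would set up a weight/potential function that I can track across deletions. The natural choice is
$$\Phi(S_i) := (r-2)\cdot \e(G[S_i]) - (r-1)\cdot \delta |S_i| - \e(G(S_i, V(G)\setminus S_i)).$$
The hypothesis says exactly that $\Phi(S_0) = \Phi(S) > 0$, and clearly $\Phi(\emptyset) = 0$. So it suffices to show that $\Phi$ is non-decreasing along the process, i.e. $\Phi(S_{i+1}) \ge \Phi(S_i)$ whenever we delete a vertex $v$ from $S_i$. Deleting $v$ changes the three terms as follows: $\e(G[S_i])$ drops by $d_1 := \deg_{G[S_i]}(v) = |N_G(v)\cap S_i|$; $|S_i|$ drops by $1$; and $\e(G(S_i, V(G)\setminus S_i))$ changes by $-(\text{edges from }v\text{ out of }S_i) + d_1 = -(\deg_G(v) - d_1) + d_1 = -\deg_G(v) + 2d_1$ (we lose $v$'s external edges but the $d_1$ edges from $v$ into $S_i$ become external edges of $S_{i+1}$). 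Hence
$$\Phi(S_i) - \Phi(S_{i+1}) = (r-2)d_1 - (r-1)\delta - \deg_G(v) + 2d_1 = r\, d_1 - (r-1)\delta - \deg_G(v).$$
Wait — I should rewrite: $\Phi(S_i) - \Phi(S_{i+1}) = (r-2)d_1 - (r-1)\delta - \bigl(\deg_G(v) - 2d_1\bigr) = r\,d_1 - \deg_G(v) - (r-1)\delta$. Now if $v$ was deleted because it violated (ii), then $d_1 = |N_G(v)\cap S_i| < \deg_G(v)/r$, so $r\,d_1 < \deg_G(v)$ and the right-hand side is negative, giving $\Phi(S_{i+1}) > \Phi(S_i)$. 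If instead $v$ was deleted because it violated (i), then $d_1 < \delta$; combined with (ii) not yet having been the reason, we still have $d_1 \le \deg_G(v)/r$ is not guaranteed, so I instead bound $r\,d_1 - \deg_G(v) \le r\,d_1 - d_1 = (r-1)d_1 < (r-1)\delta$ using $\deg_G(v) \ge d_1$ and $r > 2 \ge 1$; hence again $\Phi(S_i) - \Phi(S_{i+1}) < 0$. Either way $\Phi$ strictly increases (or at least does not decrease), so $\Phi$ stays positive throughout, and in particular the process cannot reach $\emptyset$, where $\Phi = 0$. Therefore it halts at a nonempty $S'$ with properties (i) and (ii).

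The only subtle point — and the step I expect to need the most care — is handling the two types of violating vertices uniformly in the potential computation, in particular making sure that when we delete a low-internal-degree vertex the increase from the external-edge term compensates (which is why the coefficient on $\e(G[S])$ is $r-2$ rather than $r$: the $d_1$ internal edges of $v$ are converted into external edges, contributing $+2d_1$ net, so the effective coefficient seen by $d_1$ is $(r-2)+2 = r$, matching property (ii)'s threshold $\deg_G(v)/r$). Once the bookkeeping for the single-vertex deletion step is pinned down, everything else is immediate. I should also double-check the edge cases: $r > 2$ ensures $r - 2 > 0$ so the hypothesis is not vacuous, and $\delta > 0$ ensures property (i) is a genuine minimum-degree bound.
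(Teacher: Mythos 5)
Your proof is correct and is essentially the paper's argument in different clothing: the paper takes a subset of $S$ satisfying the hypothesis inequality with $|S'|$ minimized and derives a contradiction from deleting a violating vertex, while you run the deletion process forward and observe that the quantity $(r-2)\e(G[S_i]) - (r-1)\delta|S_i| - \e(G(S_i,V(G)\setminus S_i))$ never decreases; the single-vertex bookkeeping is identical in both. No gaps.
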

\begin{proof}
Let $S'$ be a subset of $S$ satisfying
$$(r-2)\cdot \e(G[S']) > (r-1)\cdot \delta |S'| + \e(G(S',V(G)\setminus S')),$$
and subject to that, $|S'|$ is minimized. Note then that $\e(G[S'])>0$ and hence $S'$ is non-empty. If $S'$ satisfies both conditions (i) and (ii), then $S'$ is as desired.

So we may assume that $S'$ violates at least one of conditions (i) or (ii). First suppose $S'$ violates condition (i). That is, there exists a vertex $v\in S'$ such that $|N_G(v)\cap S'| < \delta$. Let $S'' := S'\setminus \{v\}$. Now 
$$\e(G[S'']) \ge \e(G[S']) - \delta$$ 
and 
$$\e(G(S'',V(G)\setminus S'')) \le \e(G(S',V(G)\setminus S'))+\delta.$$ 
Meanwhile, $|S''|=|S'|-1$. Hence
\begin{align*}
(r-2)\cdot \e(G[S'']) &\ge (r-2)\cdot (\e(G[S']) - \delta) \\
&> (r-1)\cdot \delta |S'| + \e(G(S',V(G)\setminus S')) - (r-2) \delta \\
&\ge (r-1)\cdot \delta (|S'|-1) + \e(G(S',V(G)\setminus S'))+\delta \\
&\ge (r-1)\cdot \delta |S''| + \e(G(S'',V(G)\setminus S'')),
\end{align*}
and hence $S''$ contradicts the choice of $S'$.

So we may assume that $S'$ violates condition (ii). That is, there exists a vertex $v\in S'$ such that $|N_G(v)\cap S'| < \frac{\deg_G(v)}{r}$. Let $S'' := S'\setminus \{v\}$. Now 
$$\e(G[S'']) \ge \e(G[S']) - \frac{\deg_G(v)}{r}$$ and 
$$\e(G(S'',V(G)\setminus S'')) \le \e(G(S',V(G)\setminus S'))-\frac{r-2}{r}\cdot \deg_G(v).$$ 
Meanwhile, $|S''|=|S'|-1$. Hence
\begin{align*}
(r-2)\cdot \e(G[S'']) &\ge (r-2)\cdot \left(\e(G[S']) - \frac{\deg_G(v)}{r}\right) \\
&> (r-1)\cdot \delta |S'| + \e(G(S',V(G)\setminus S'))-\frac{r-2}{r}\cdot \deg_G(v) \\
&\ge  (r-1)\cdot \delta |S''| + \e(G(S'',V(G)\setminus S'')),
\end{align*}
and hence $S''$ contradicts the choice of $S'$.
\end{proof}

We also need two classical results as follows. The first is an explicit form of Theorem~\ref{t:KT}, which is not hard to derive from the results of Kostochka in~\cite{Kostochka82}; namely, he showed that for $t\ge 4$, the maximum density $d$ of a $K_t$-minor-free graph satisfies $t\ge \frac{0.064 \cdot d}{\sqrt{\log d}}$.

\begin{thm}\label{t:density}
 	Let $t \geq 2$ be an integer. Then every graph $G$  with $\d(G) \geq 30 \cdot t\cdot \sqrt{\log t}$ has a $K_t$ minor.
\end{thm}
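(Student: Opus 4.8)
The plan is to read \cref{t:density} as essentially the contrapositive of Kostochka's extremal bound quoted just above it: every $K_t$-minor-free graph has density $O(t\sqrt{\log t})$. Since that bound is stated only for $t\ge 4$, I would first dispose of $t\in\{2,3\}$ by hand, and then, for $t\ge 4$, run a short numerical estimate confirming that the constant $30$ is large enough.

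For $t\le 3$: here $30t\sqrt{\log t}\ge 60\sqrt{\log 2}>1$, so any $G$ with $\d(G)\ge 30t\sqrt{\log t}$ has $\e(G)>\vert(G)$, hence contains a cycle, which contracts to $K_3$ and so gives a $K_t$ minor for every $t\le 3$. So assume $t\ge 4$; by contraposition it suffices to show every $K_t$-minor-free graph $G$ has $\d(G)<30t\sqrt{\log t}$. Let $D=D(t)$ be the supremum of $\d(H)$ over all $K_t$-minor-free graphs $H$; since $\d(G)\le D$, it is enough to prove $D<30t\sqrt{\log t}$. By Kostochka's bound, $t\ge 0.064\,D/\sqrt{\log D}$, i.e.\ $D/\sqrt{\log D}\le 15.625\,t$.

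Now suppose for contradiction that $D\ge 30t\sqrt{\log t}$. As $t\ge 4$ this forces $D\ge 30\cdot 4\cdot\sqrt{\log 4}>141>\sqrt e$, and on $[\sqrt e,\infty)$ the map $x\mapsto x/\sqrt{\log x}$ is increasing (its derivative is $(\log x-\tfrac12)/(\log x)^{3/2}>0$ there). Hence $D/\sqrt{\log D}\ge 30t\sqrt{\log t}\big/\sqrt{\log(30t\sqrt{\log t})}$, and combining with $D/\sqrt{\log D}\le 15.625\,t$ and dividing by $t$ gives $\log(30t\sqrt{\log t})\ge (30/15.625)^2\log t=3.6864\,\log t$, that is, $\log 30+\log t+\tfrac12\log\log t\ge 3.6864\,\log t$. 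But for $t\ge 4$ the left side is strictly smaller than the right: the inequality $\log 30+\tfrac12\log\log t<2.6864\,\log t$ holds at $t=4$ (numerically $\log 30\approx 3.40$, $\tfrac12\log\log 4\approx 0.16$, $2.6864\log 4\approx 3.72$), and the difference $2.6864\log t-\tfrac12\log\log t-\log 30$ has positive derivative for $t\ge 4$, so it stays positive. This contradiction proves $D<30t\sqrt{\log t}$, and the theorem follows.

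I do not expect a substantive obstacle here — it is a bookkeeping deduction from Kostochka's theorem. The two points that genuinely need care are (a) that the quoted bound is available only for $t\ge 4$, so $t\in\{2,3\}$ must be handled separately, and (b) that the constant $30$ is essentially optimal for this argument, since the numerical inequality $\log 30+\log t+\tfrac12\log\log t<3.6864\,\log t$ only just holds at $t=4$; consequently both monotonicity steps (in $x$, to feed $30t\sqrt{\log t}$ into $x/\sqrt{\log x}$, and in $t$, to reduce the numerical check to $t=4$) must actually be carried out rather than merely asserted.
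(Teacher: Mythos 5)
Your derivation is correct and follows exactly the route the paper intends: Theorem~\ref{t:density} is stated there without proof as an explicit form of Kostochka's bound $t\ge 0.064\,d/\sqrt{\log d}$ (valid for $t\ge 4$), and you simply carry out the omitted numerical verification, including the separate treatment of $t\in\{2,3\}$ and the two monotonicity checks. No issues.
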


The second is a result of Mader~\cite{Mader72} that shows that high density graphs have a highly-connected subgraph.

\begin{lem}[\cite{Mader72}]\label{l:connect}
Every graph $G$ contains a subgraph $G'$ such that $\kappa(G') \geq \d(G)/2$.
\end{lem}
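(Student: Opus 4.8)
The plan is to prove this by an extremal (minimal-counterexample) argument in the style of Mader. Write $d := \d(G)$ and let $q := \lceil d/2 \rceil - 1$; it suffices to find a $(q+1)$-connected subgraph, since then $\kappa(G') \ge q+1 = \lceil d/2\rceil \ge d/2$. The case $d = 0$ is vacuous (take $G' = G$), so assume $d > 0$ and hence $q \ge 0$. By definition $q < d/2$, so $\e(G) > 2q\,\vert(G) \ge 2q(\vert(G) - q)$. I would then consider the family $\mathcal{H}$ of all subgraphs $H \subseteq G$ with $\vert(H) \ge 2q$ and $\e(H) > 2q\bigl(\vert(H) - q\bigr)$. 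Note $G \in \mathcal{H}$: combining $\e(G) > 2q\,\vert(G)$ with $\e(G) \le \binom{\vert(G)}{2}$ forces $\vert(G) > 4q+1$ (when $q \ge 1$), and when $q = 0$ the hypothesis $\e(G) > 0$ gives $\vert(G) \ge 2$; either way $\vert(G) \ge 2q$. Now choose $H \in \mathcal{H}$ with $\vert(H)$ minimum — this $H$ will be the desired subgraph.

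Next I would establish three properties of $H$, each exploiting minimality. (A) $\vert(H) \ge 2q+1$: otherwise $\vert(H) = 2q$ and $\e(H) > 2q^2$, contradicting $\e(H) \le \binom{2q}{2} = 2q^2 - q$. (B) $\delta(H) \ge 2q+1$: for any $v \in V(H)$, the graph $H - v$ has $\vert(H) - 1 \ge 2q$ vertices, so by minimality $H - v \notin \mathcal{H}$, i.e. $\e(H-v) \le 2q(\vert(H) - 1 - q)$; subtracting this from $\e(H) > 2q(\vert(H)-q)$ gives $\deg_H(v) > 2q$. (C) $H$ is $(q+1)$-connected: suppose not; since $\vert(H) \ge 2q+1 > q+1$, there is a separation $(A,B)$ of $H$ with $A \cup B = V(H)$, no edges between $A\setminus B$ and $B\setminus A$, $S := A\cap B$ of size $|S| \le q$, and both $A\setminus B$ and $B\setminus A$ nonempty. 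Every vertex of $A\setminus B$ keeps all of its $\ge 2q+1$ neighbours inside $A$, so $\vert(A) \ge 2q+2 \ge 2q$, and symmetrically $\vert(B) \ge 2q$; moreover $\vert(A), \vert(B) < \vert(H)$, so minimality yields $\e(H[A]) \le 2q(\vert(A)-q)$ and $\e(H[B]) \le 2q(\vert(B)-q)$.

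The crux is then the short count that contradicts $H \in \mathcal{H}$: since every edge of $H$ lies inside $A$ or inside $B$, we get $\e(H) \le \e(H[A]) + \e(H[B]) \le 2q\bigl(\vert(A) + \vert(B) - 2q\bigr) = 2q\bigl(\vert(H) + |S| - 2q\bigr) \le 2q\bigl(\vert(H) - q\bigr)$, using $\vert(A) + \vert(B) = \vert(H) + |S|$ and $|S| \le q$. This contradicts the defining inequality of $\mathcal{H}$, so $H$ is $(q+1)$-connected. The step I expect to be the real obstacle is not any single calculation but getting the ``density surplus'' bookkeeping right — that is, guessing the precise threshold $2q(\vert(H)-q)$ in the definition of $\mathcal{H}$, since it is exactly this form that makes (A), (B) and (C) all close up simultaneously (a cruder threshold such as $2q\,\vert(H)$ breaks the separator count in (C)). The only other care needed is for the degenerate ranges $q=0$ or $d \le 2$, where $(q+1)$-connectivity is just connectivity and is handled directly by the same argument with $S = \emptyset$.
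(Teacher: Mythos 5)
Your proof is correct: with $q=\lceil \d(G)/2\rceil-1$, taking a vertex-minimal subgraph $H$ satisfying $\vert(H)\ge 2q$ and $\e(H)>2q(\vert(H)-q)$, your steps (A)--(C), including the separator count $\e(H)\le 2q(\vert(H)+|S|-2q)\le 2q(\vert(H)-q)$ and the $q=0$ degenerate case, all check out. The paper does not prove this lemma but simply cites Mader (1972), and your argument is essentially that classical extremal-subgraph proof (the one found in standard textbooks), so there is nothing to reconcile beyond noting that your self-contained write-up matches the cited source's approach.
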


We are now ready to prove Theorem~\ref{t:SmallConn2} which we restate for convenience.

\SmallConnNew*

\begin{proof}[Proof of Theorem~\ref{t:SmallConn2}]
Let $C := 480 \cdot C_{\ref{t:logbip}}.$ Let $d :=Ck$. We note the result is vacuously true if $t\le 2$ since there does not exist a $K_2$-minor-free graph $G$ with $\d(G)\ge 1$. So we assume that $t\ge 3$. By deleting edges as necessary, we may assume without loss of generality that $\d(G)=d$. Note by Theorem~\ref{t:density} since $G$ is $K_t$-minor-free, we have that $d < 30 \cdot t \cdot \sqrt{\log t}$. Since $C \ge 30$, it follows that $k\le t\cdot \sqrt{\log t}$.

Let $H_1, \ldots, H_m$ be non-empty pairwise vertex-disjoint connected subgraphs of $G$ such that $\vert(H_i)= \left\lceil 10 \cdot \log^2 t \cdot \left(\frac{t}{k}\right)^2 \right\rceil$ for every $i\in [m]$ and the graph $G'$ obtained from $G$ by contracting each $H_i$ to a new vertex $x_i$ for each $i\in [m]$ satisfies
$$\e(G) - \e(G') \le \frac{d}{10} \cdot \big(\vert(G)-\vert(G')\big),$$
and subject to that $m$ is maximized. Note that such a collection exists since the empty collection satisfies the hypotheses. 

Note that $G'$ is $K_t$-minor-free since $G$ is. Let 
\begin{align*}
X &:=\{x_i: i\in [m]\},\\
Y &:= \{v\in V(G')\setminus X: |N_{G'}(v) \cap X| \ge d/20 \},\text{ and }\\
Z &:= \{v\in V(G')\setminus (X\cup Y): \deg_{G'}(v) \ge 20 \cdot d \cdot \log t\}.
\end{align*} 

Since $\vert(H_i)\ge 10 \cdot \log^2 t \cdot \left(\frac{t}{k}\right)^2$ for every $i\in [m]$ and the $H_i$ are pairwise vertex-disjoint, we have that $|X| \le \frac{\vert(G)}{10 \cdot \log^2 t} \cdot \left(\frac{k}{t}\right)^2$.

\newtheorem{innercustomthm}{Claim}
\newenvironment{customthm}[1]
  {\renewcommand\theinnercustomthm{#1}\innercustomthm}
  {\endinnercustomthm}
	
\begin{customthm}{\ref{t:SmallConn2}.1}
$|Y|\le (\log t)\cdot |X| \cdot \left(\frac{t}{k}\right)^2$.
\end{customthm}
\begin{proof}
Since $t\ge 3$, if $|Y|\le |X|$, then $|Y|\le (\log t)\cdot |X| \cdot \left(\frac{t}{k}\right)^2$  as desired since $k\le t\cdot \sqrt{\log t}$. So we may assume that $|X| < |Y|$. Since $G'$ is $K_t$-minor-free, we have by Theorem~\ref{t:logbip} applied to $G'(X,Y)$ that
$$\e(G'(X,Y)) \le C_{\ref{t:logbip}} \cdot t\sqrt{\log t} \sqrt{|X||Y|}  + (t-2)(|X|+|Y|).$$
Since $d \ge 60\cdot C_{\ref{t:logbip}} \cdot k$, it follows from the definition of $Y$ that 
$$\e(G'(X,Y)) \ge \frac{d}{20}\cdot |Y| \ge 3 \cdot C_{\ref{t:logbip}} \cdot k \cdot |Y|.$$ 
Using $|X|\le |Y|$, $t\le k$ and $C_{\ref{t:logbip}}\ge 1$, we find that $(t-2)(|X|+|Y|) \le 2\cdot C_{\ref{t:logbip}} \cdot k \cdot |Y|$. Combining these bounds and rearranging, we then find that
$$C_{\ref{t:logbip}} \cdot k \cdot |Y| \le C_{\ref{t:logbip}} \cdot t\sqrt{\log t}\sqrt{|X||Y|},$$
which squaring both sides and canceling implies that $|Y| \le (\log t) \cdot |X|\cdot \left(\frac{t}{k}\right)^2$ as desired.
\end{proof}

Note that by assumption $\e(G') \ge \e(G) - \frac{d}{10} \cdot \vert(G)$. Since $\e(G) = d \cdot \vert(G)$, we have that $\e(G') \ge \frac{9}{10} \cdot \e(G)$. On the other hand,  since $\e(G') \le \e(G)=d\cdot \vert(G)$, we have by the definition of $Z$ that $|Z| \le 2\cdot \frac{\vert(G)}{20 \log t}$. 

Let $T :=X\cup Y\cup Z$ and let $S :=V(G')\setminus T$. Thus 
\begin{align*}
|T| &= |X|+|Y|+|Z| \le |X|\cdot \left(1+(\log t)\cdot \left(\frac{t}{k}\right)^2\right) + |Z| \\ 
&\le 2\log t \cdot \left(\frac{t}{k}\right)^2\cdot |X| + \frac{\vert(G)}{10 \log t} \le \frac{3}{10} \cdot \frac{\vert(G)}{\log t}.
\end{align*}
Since $G'$ is $K_t$-minor-free, we have by Theorem~\ref{t:logbip} applied to $G'(S, T)$ that
$$\e(G'(S,T)) \le  C_{\ref{t:logbip}} \cdot t\sqrt{\log t} \sqrt{|S||T|}  + (t-2)\cdot \vert(G'),$$
and hence 
$$\e(G'(S,T)) \le C_{\ref{t:logbip}} \cdot t\sqrt{\log t} \cdot \sqrt{|S|\cdot \frac{3\cdot \vert(G)}{10\cdot \log t}}  + (t-2)\cdot \vert(G) \le C_{\ref{t:logbip}} \cdot (2t-2)\cdot \vert(G),$$ 
since $|S|\le \vert(G)$.

Since $G$ is $K_t$-minor-free, we have by Theorem~\ref{t:density} that 
$$\e(G'[T]) \le 30 \cdot t\sqrt{\log t} \cdot |T| \le 9\cdot t\cdot \vert(G).$$ 
Combining, we find that 
$$\e(G'[T])+\e(G'(S,T)) < 11 \cdot C_{\ref{t:logbip}} \cdot t \cdot \vert(G) \le \frac{1}{20} \cdot \e(G),$$ 
since $\e(G) = d\cdot \vert(G) \ge 220 \cdot C_{\ref{t:logbip}} \cdot t \cdot \vert(G)$.  Thus
$$\e(G'[S]) = \e(G')-\e(G'[T])-\e(G'(S,T)) > \frac{9}{10}\cdot \e(G) - \frac{1}{20} \cdot \e(G) = \frac{17}{20}\cdot \e(G) = \frac{17}{20} \cdot d \cdot \vert(G).$$ 
Let $r=3$ and $\delta = \frac{2}{5} \cdot d$. From the above calculations, we have that
$$\e(G'[S]) > 2\delta \cdot |S| + \e(G'(S,V(G')\setminus S)).$$
Thus by Lemma~\ref{lem:SpecialSubset}, there exists a nonempty subset $S'$ of $S$ such that both of Lemma~\ref{lem:SpecialSubset}(i) and (ii) hold; that is, the minimum degree of $G[S']$ is at least $\delta$ and $|N_{G'}(v)\cap S'| \ge \frac{1}{3} \cdot \deg_{G'}(v)$ for every $v\in S'$. 

Now let $H_{m+1}$ be a non-empty connected subgraph of $S'$ such that $\vert(H_{m+1})\le \left\lceil 10 \cdot \log^2 t \left(\frac{t}{k}\right)^2 \right\rceil$ and the graph $G''$ obtained from $G'$ by contracting $H_{m+1}$ to a new vertex $x_{m+1}$ satisfies 
$$\e(G') - \e(G'') \le \frac{d}{10} \cdot (\vert(H_{m+1})-1),$$
and subject to that $\vert(H_{m+1})$ is maximized. Note that $H_{m+1}$ exists as any vertex in $S'$ satisfies the conditions.

By the maximality of $m$, we find that $\vert(H_{m+1}) \ne \left\lceil 10 \cdot \log^2 t \left(\frac{t}{k}\right)^2 \right\rceil$ and hence $\vert(H_{m+1}) \le \left\lceil 10 \cdot \log^2 t \left(\frac{t}{k}\right)^2 \right\rceil - 1$. Let $R := N_{G''}(x_{m+1})\setminus X$ and $R':=N_{G''}(x_{m+1})\cap S'$. Note that
$$|R'| \ge \left(\sum_{v\in V(H_{m+1})} |N_{G'}(v)\cap S'|\right) - 2 \cdot (\e(G')-\e(G'')).$$
By condition Lemma~\ref{lem:SpecialSubset}(i), we find that $|N_{G'}(v)\cap S'|\ge \delta = \frac{2}{5} \cdot d$ for each $v\in V(H_{m+1})$. Since $\e(G')-\e(G'') \le \frac{d}{10} \cdot \vert(H_{m+1})$, we then find that
$$|R'| \ge \frac{1}{2} \left(\sum_{v\in V(H_{m+1})} |N_{G'}(v)\cap S'|\right).$$
Thus by condition Lemma~\ref{lem:SpecialSubset}(ii) since $r=3$, we have that 
$$|R'| \ge \frac{1}{6}\left(\sum_{v\in V(H_{m+1})} \deg_{G'}(v) \right) \ge \frac{|R|}{6}.$$  

Let $v \in R'$. By definition of $Y$, we have that $|N_{G''}(v)\cap X| \le \frac{d}{20}$. Let $H_{m+1}' := G[V(H_{m+1})\cup \{v\}]$. Note that $\vert(H_{m+1}') = \vert(H_{m+1}) + 1 \le \left\lceil 10 \cdot \log^2 t \left(\frac{t}{k}\right)^2 \right\rceil$. Note by definition of $R'$, we have that $H_{m+1}'$ is connected. Thus by the maximality of $H_{m+1}$, we find that 
$$\e(G')-\e(G'') > \frac{d}{10}.$$
Since $\frac{d}{10}$ is integral by definition of $d$, we find that $\e(G')-\e(G'')\ge \frac{d}{10} + 1$. Hence we find that
$$|N_{G''}(v)\cap N_{G''}(x_{m+1})| \ge \e(G')-\e(G'') - 1 \ge \frac{d}{10},$$
where the $-1$ accounts for the edge $vx_{m+1}$. 
Hence for every $v\in R'$, we have that
$$|N_{G''}(v)\cap R| \ge \frac{d}{10} - \frac{d}{20} = \frac{d}{20}.$$ 
Since every vertex in $R$ in $G''$ corresponds to a vertex in $G$, we find that $|N_{G}(v)\cap R| = |N_{G''}(v)\cap R|$ for every $v\in R'$ and that $G[R]=G''[R]$. But then 
$$\e(G[R]) \ge \frac{1}{2}\cdot \frac{d}{20} \cdot |R'| \ge \frac{d}{240} \cdot |R|.$$
Thus $\d(G[R]) \ge \frac{d}{240} \ge 2k$ since $C\ge 480$. Since every vertex in $H_{m+1}$ is not in $Z$, we find that 
$$|R| \le 20\cdot d\cdot (\log t) \cdot \vert(H_{m+1}) \le 200 \cdot Ck \cdot \log^3 t \cdot \left(\frac{t}{k}\right)^2 \le C^2 \cdot t\cdot \log^3 t,$$
since $d=Ck$, $k\ge t$ and $C\ge 200$. By Lemma~\ref{l:connect}, $G[R]$ has a $k$-connected subgraph $H$. Since $H$ is a subgraph of $G[R]$, we have that $\vert(H)\le |R| \le C^2 \cdot t\cdot \log^3 t$ and hence $H$ is a subgraph as desired.  
\end{proof}

\section{Connectivity Toolkit}\label{s:prelim}

To prove Lemma~\ref{lem:inseparable2} and Theorem~\ref{t:tech}, we need a suite of connectivity results as follows.

First we need the following recent yet crucial result of Gir\~{a}o and Narayanan~\cite{GirNar20} which shows that we can exchange a small amount of chromatic number to obtain a subgraph with high-connectivity. In particular the gained connectivity is linear in the forfeited chromatic number. We note that while Gir\~{a}o and Narayanan did not explicitly calculate the constant in the theorem below, the theorem is nevertheless implicit in their work.

\begin{thm}[\cite{GirNar20}]\label{t:LargeChi}
For every positive integer $k$, if $G$ is a graph with $\chi(G)\ge 7k$, then $G$ contains a $k$-connected subgraph $H$ with $\chi(H)\ge \chi(G)-6k$.
\end{thm}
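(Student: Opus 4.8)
The plan is to prove this by induction on $\vert(G)$, in the slightly strengthened form: \emph{every graph $G$ either contains a $k$-connected subgraph $H$ with $\chi(H)\ge \chi(G)-6k$, or satisfies $\chi(G)\le 7k-1$} — from which the theorem is immediate. The first thing to dispose of is the possibility that $G$ has no $k$-connected subgraph whatsoever: by Mader's \cref{l:connect}, any graph of density at least $2k$ has a $k$-connected subgraph, so in this case every subgraph of $G$ has density less than $2k$, hence $G$ is $O(k)$-degenerate and $\chi(G)\le 4k\le 7k-1$, and we are done. So we may assume $\chi(G)\ge 7k$ and that $G$ does contain a $k$-connected subgraph. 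If $G$ is itself $k$-connected we take $H=G$; otherwise $G$ has a separation $(A,B)$ with $A\cup B=V(G)$, $A\setminus B\neq\emptyset\neq B\setminus A$, and $|A\cap B|\le k-1$.

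The engine of the argument is the elementary \emph{combination bound} for such separations: colouring $G[A]$ and $G[B]$ optimally and spending at most $|A\cap B|\le k-1$ reserved colours to reconcile the two colourings on $A\cap B$ shows $\chi(G)\le \max\{\chi(G[A]),\chi(G[B])\}+(k-1)$. Recursing into whichever of $G[A],G[B]$ has the larger chromatic number (both have fewer vertices than $G$) and applying the inductive hypothesis there produces a $k$-connected subgraph $H$ with $\chi(H)\ge \max\{\chi(G[A]),\chi(G[B])\}-6k\ge \chi(G)-7k+1$ — tantalisingly close to the target, but short by $k-1$. Note the recursion \emph{depth} is not the issue: a drop in chromatic number only occurs at a separation both of whose sides have strictly smaller chromatic number than the current graph (a $k$-connected subgraph cannot be split by a separator of order $<k$, so along a carefully chosen branch the chromatic number stays put whenever some side retains it), so what must be controlled is the \emph{total} drop accumulated over the lossy separations traversed.

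The crux — and the step I expect to be the main obstacle — is precisely this accounting: showing the total chromatic loss can be kept to $6k$ rather than $(k-1)$ times the number of lossy separations. The way I would attack it is to stop chopping along arbitrary separations and instead pin down the part of $G$ carrying the chromatic number: pass to a vertex-minimal subgraph $G'$ with $\chi(G')\ge \chi(G)-6k$, which is then vertex-$\chi$-critical with $\chi(G')=\chi(G)-6k\ge k$, and exploit the classical structure theory of colour-critical graphs at low-order separators (Dirac/Gallai/Toft): a critical graph has no clique cutset, its separators are ``spread'', and the colourings forced on the two sides of a $(<k)$-separation are constrained in a way that lets one delete the ``useless'' low-chromatic side across such a separator \emph{without any loss of chromatic number}, iterating until what remains is $k$-connected while still having chromatic number $\ge \chi(G)-6k$. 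Making this deletion process terminate with a $k$-connected graph, and verifying that the $6k$ of headroom obtained by passing to $G'$ is never further eroded, is the delicate part; the hypothesis $\chi(G)\ge 7k$ is exactly what is needed both for the critical subgraph $G'$ to be nontrivial ($\chi(G')\ge k$, so that $|A\cap B|<k\le\chi(G')$ and the combination step behaves) and for Mader's lemma to cover the degenerate base case.
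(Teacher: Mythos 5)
This statement is imported by the paper from Gir\~ao--Narayanan \cite{GirNar20} (the authors explicitly say the stated constants are only implicit in that work), so there is no in-paper proof to compare against; your proposal has to stand on its own, and as written it does not. You have correctly assembled the easy parts: the base case via Mader's \cref{l:connect} (no $k$-connected subgraph forces density $<2k$ in every subgraph, hence $4k$-colorability), the combination bound $\chi(G)\le\max\{\chi(G[A]),\chi(G[B])\}+|A\cap B|$ for a separation of order $<k$, and the diagnosis that recursing into the high-chromatic side loses up to $k-1$ colors at every ``lossy'' separation, so the total loss is $(k-1)\times(\text{number of lossy steps})$ rather than $6k$. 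But that accounting problem is the entire content of the theorem, and your proposal does not solve it --- it only names a plan.

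The plan, moreover, rests on a false premise. You propose to pass to a vertex-minimal subgraph $G'$ with $\chi(G')\ge\chi(G)-6k$, note that $G'$ is vertex-critical, and then ``delete the useless low-chromatic side across a $(<k)$-separator without any loss of chromatic number, iterating until what remains is $k$-connected.'' In a vertex-critical graph, deleting \emph{any} vertex strictly decreases the chromatic number, so there is no side that can be deleted losslessly; and criticality does not force high connectivity: the Haj\'os construction produces $m$-critical graphs with separators of order $2$ for every $m$, so your iteration has no reason to terminate at a $k$-connected graph, and the Dirac--Gallai--Toft structure theory of critical graphs at small cutsets does not supply a lossless reduction of the kind you need. (A symptom of the same gap: your inductive statement does not even close, since the ``or $\chi\le 7k-1$'' branch applied to $G[A]$ only yields $\chi(G)\le 8k-2$, not a contradiction with $\chi(G)\ge 7k$.) What is missing is precisely the global argument of \cite{GirNar20} that bounds the \emph{total} chromatic loss over all separations encountered by a constant multiple of $k$, independent of how many there are; without that, the proposal establishes only the trivial bound and the theorem remains unproved.
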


Indeed, Theorem~\ref{t:LargeChi} is very useful and we will use it whenever we need to restore connectivity to continue building a complete minor. We remark that Nguyen~\cite{Ngu22} later improved the constants in Theorem~\ref{t:LargeChi}, though for our purposes any constant will do. 

Another useful lemma is the following ``redundancy" version of Menger's Theorem. First recall the following definitions. Let $G$ be a graph and $A,B\subseteq V(G)$. A path $P=v_1\ldots v_k$ in $G$ is an \emph{$A-B$ path} if $V(G)\cap A = \{v_1\}$ and $V(G)\cap B = \{v_k\}$. A subset $X\subseteq V(G)$ \emph{separates} $A$ and $B$ if every $A-B$ path contains a vertex of $X$. Finally we recall Menger's Theorem from 1927 before stating our variant.

\begin{thm}[Menger's Theorem~\cite{Men27}]
Let $G$ be a graph and $A,B\subseteq V(G)$. Then the minimum number of vertices separating $A$ from $B$ is equal to the maximum number of vertex-disjoint $A-B$ paths.
\end{thm}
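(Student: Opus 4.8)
The plan is to prove the two inequalities separately. The bound ``max number of disjoint $A-B$ paths $\le$ min size of an $A-B$ separator'' is immediate: if $P_1,\dots,P_\ell$ are pairwise vertex-disjoint $A-B$ paths and $X$ separates $A$ from $B$, then by definition each $P_i$ contains a vertex of $X$, and since the $P_i$ are vertex-disjoint these vertices are distinct, so $|X|\ge \ell$; taking $\ell$ maximum and $X$ minimum gives the inequality. The content is the reverse direction: writing $k$ for the minimum size of an $A-B$ separator in $G$, one must exhibit $k$ pairwise vertex-disjoint $A-B$ paths. I would prove this by induction on $\e(G)$.

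For the base case $\e(G)=0$, every $A-B$ path is a single vertex of $A\cap B$, and $A\cap B$ is itself a minimum separator, so both quantities equal $|A\cap B|$. For the inductive step, fix an edge $e=xy$. If $G-e$ also has minimum $A-B$ separator of size at least $k$, then by induction $G-e$ — hence $G$ — contains $k$ disjoint $A-B$ paths and we are done. Otherwise $G-e$ has a separator $S$ with $|S|\le k-1$. Since any $A-B$ path of $G$ avoiding $S$ must traverse $e$, both $S\cup\{x\}$ and $S\cup\{y\}$ separate $A$ from $B$ in $G$; minimality of $k$ then forces $|S|=k-1$ and $x,y\notin S$, so $S_x:=S\cup\{x\}$ and $S_y:=S\cup\{y\}$ are both minimum $A-B$ separators of $G$, of size exactly $k$.

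The key step is to split $G$ along a minimum separator. Let $G_A$ be obtained from $G-e$ by keeping only $A$, the separator (say $S_x$), and the vertices reachable from $A$ without first meeting $S_x$, viewed as an instance with terminal sets $A$ and $S_x$; define $G_B$ symmetrically on the $B$-side with terminals $S_x$ and $B$. I would then verify: (a) each of $G_A,G_B$ has strictly fewer edges than $G$ — $e$ has been deleted, and a short argument (this is where the freedom to use $S_x$ versus $S_y$ is exploited) shows the deleted side is nonempty and, crucially, that $y$ does not lie on the $A$-side; and (b) $S_x$ is still a minimum separator in each of $G_A$ and $G_B$, because a separator of size $<k$ in one side could be concatenated with the $S_x$-to-$B$ (resp.\ $A$-to-$S_x$) connectivity to produce a sub-$k$ $A-B$ separator in $G$, a contradiction. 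Induction then yields $k$ disjoint $A-S_x$ paths in $G_A$ and $k$ disjoint $S_x-B$ paths in $G_B$; since $|S_x|=k$, each vertex of $S_x$ is the endpoint of exactly one path from each side, and concatenating the matched pairs gives $k$ disjoint $A-B$ paths in $G$.

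I expect the main obstacle to be precisely the bookkeeping in the splitting step: giving a clean definition of $G_A$ and $G_B$ that covers degenerate situations (e.g.\ $A\cap S_x\ne\emptyset$, or $x$ or $y$ already lying in $A$ or $B$), and simultaneously establishing both that the edge count strictly drops and that the minimum-separator value is exactly preserved on each side — in particular the claim that $y$ is not on the $A$-side of the split, which is what makes the deleted edge $e$ genuinely absent from the recursive instance. Everything else in the argument is routine.
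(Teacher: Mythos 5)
The paper does not prove this statement at all --- Menger's theorem is quoted as a classical result with a citation --- so your proposal stands or falls on its own. The easy inequality, the base case, and the final concatenation argument are fine, but the splitting step contains a genuine error, not just bookkeeping. The claim you flag as crucial, that $y$ does not lie on the $A$-side of $S_x$, is simply false, and with it claim (b) fails. Take $G$ to be the path $a\,y\,x\,b$ with $A=\{a\}$, $B=\{b\}$ and $e=xy$. Here $k=1$, and $G-e$ has the separator $S=\emptyset$, so $S_x=\{x\}$ and $S_y=\{y\}$ are minimum $A$--$B$ separators of $G$. But $y$ is reachable from $a$ in $G-e$ avoiding $x$, so $y$ lies on the $A$-side; your $G_A$ (a subgraph of $G-e$) consists of the edge $ay$ plus an isolated vertex $x$, and its minimum $A$--$S_x$ separator is $0<k$, so induction returns no path and the step collapses. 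Switching to $S_y$ only moves the problem to the $B$-side (there $G_B$ loses the edge $xy$, and $x$ cannot reach $y$), and choosing a different edge of this graph fails in the same way. The structural reason is that deletion produces a minimum separator containing only \emph{one} endpoint of $e$: an $A$--$B$ path of $G$ may enter the separator through the edge $e$ itself, and then its initial segment is not a path of $G_A\subseteq G-e$, so an $A$--$S_x$ separator of $G_A$ need not separate $A$ from $B$ in $G$ --- which is exactly the implication your justification of (b) relies on.

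The standard repair is to \emph{contract} $e$ rather than delete it at the point where the small separator is extracted. Suppose $G$ has fewer than $k$ disjoint $A$--$B$ paths; then so does $G/e$ (disjoint paths in $G/e$ lift to $G$ by expanding the contracted vertex $v_e$, possibly using $e$), so by induction $G/e$ has a separator $Y$ with $|Y|<k$, and $Y$ must contain $v_e$, whence $Y':=(Y\setminus\{v_e\})\cup\{x,y\}$ is a minimum $A$--$B$ separator of $G$ of size $k$ containing \emph{both} ends of $e$. Because $x,y\in Y'$, the initial segment of any $A$--$B$ path up to its first vertex of $Y'$ cannot traverse $e$, so every $A$--$Y'$ separator (and symmetrically every $Y'$--$B$ separator) of $G-e$ does separate $A$ from $B$ in $G$ and hence has size at least $k$. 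Applying the induction hypothesis to $G-e$ on each side and concatenating exactly as in your last paragraph then produces the contradiction. (A deletion-only argument does exist, but there the side graphs must be taken inside $G$, keeping $e$ on the $A$-side, and one needs a separate argument that both sides genuinely lose edges; the claims as you state them do not supply this.)
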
 

\begin{lem}\label{lem:MengerVariant}
Let $G$ be a graph and let $A_1,A_2,B$ be disjoint non-empty subsets of $V(G)$. If for each $i\in\{1,2\}$, there exists a
set $\mc{P}_i$ of $A_i-B$ paths in $G\setminus A_{3-i}$ with $|\mc{P}_i|=2|A_i|$ which are pairwise disjoint in $G\setminus A_i$ such that every vertex of $A_i$ is in exactly two paths in $\mc{P}_i$, then there exist $|A_1|+|A_2|$ vertex-disjoint $(A_1\cup A_2) - B$ paths in $G$.
\end{lem}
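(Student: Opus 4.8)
The plan is to set up an auxiliary graph in which a single application of Menger's Theorem yields the desired paths, using the hypothesis to guarantee that no small separator can exist. First I would form $G^+$ from $G$ by adding two new vertices $a_1, a_2$, joining $a_i$ to every vertex of $A_i$, and then contracting nothing but instead working with the sets $A_1^* = \{a_1\}$, $A_2^* = \{a_2\}$; the goal becomes finding $|A_1|$ vertex-disjoint $a_1$--$B$ paths and $|A_2|$ vertex-disjoint $a_2$--$B$ paths that are moreover pairwise disjoint across the two families, i.e. $|A_1| + |A_2|$ vertex-disjoint $(\{a_1, a_2\})$--$B$ paths in $G^+$ whose initial segments pass through $A_1$ (for those from $a_1$) and through $A_2$ (for those from $a_2$). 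By Menger's Theorem applied in $G^+$ with $A = \{a_1, a_2\}$ — more precisely, the Menger-type statement for the number of disjoint paths from a set to a set — it suffices to show there is no vertex set $X \subseteq V(G^+)$ with $|X| < |A_1| + |A_2|$ separating $\{a_1, a_2\}$ from $B$; and since deleting $a_i$ only kills the $A_i$-paths, one argues about $X \subseteq V(G)$.

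The key step is the counting argument showing no such small separator $X$ exists. Suppose $X \subseteq V(G)$ separates $A_1 \cup A_2$ from $B$ with $|X| \le |A_1| + |A_2| - 1$. Write $X_i = X \cap A_i$ and note $X$ must in particular separate $A_i$ from $B$ inside $G \setminus A_{3-i}$ for each $i$. Here is where I use the redundancy hypothesis: the $2|A_i|$ paths in $\mc{P}_i$ are pairwise disjoint in $G \setminus A_i$ and each vertex of $A_i$ lies on exactly two of them, so a vertex outside $A_i$ can lie on at most one path of $\mc{P}_i$, while a vertex of $A_i$ lies on exactly two. Hence $X \setminus A_i$ hits at most $|X \setminus A_i|$ paths of $\mc{P}_i$ and $X_i = X \cap A_i$ hits at most $2|X_i|$ of them; since $X$ must meet all $2|A_i|$ paths (each is an $A_i$--$B$ path avoiding $A_{3-i}$, so it is an $(A_1 \cup A_2)$--$B$ path), we get $|X \setminus A_i| + 2|X_i| \ge 2|A_i|$, i.e. $|X| + |X_i| \ge 2|A_i|$ after adding $|X_i|$ to both sides — more carefully, $|X \setminus A_i| + 2|X_i| = |X| + |X_i|$ only if $X_i \subseteq X$, which holds, so $|X| + |X \cap A_i| \ge 2|A_i|$ for each $i \in \{1,2\}$. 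Summing over $i=1,2$ and using $|X \cap A_1| + |X \cap A_2| \le |X|$ (as $A_1, A_2$ are disjoint) gives $3|X| \ge |X| + (|X\cap A_1| + |X \cap A_2|) + \big(2|A_1| + 2|A_2| - |X| - |X \cap A_1| - |X\cap A_2|\big)$; cleaning up, $2|X| \ge 2|A_1| + 2|A_2| - (|X\cap A_1| + |X \cap A_2|) \ge 2(|A_1|+|A_2|) - |X|$, hence $|X| \ge |A_1| + |A_2|$, contradicting $|X| \le |A_1|+|A_2|-1$.

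Given that no separator of size $< |A_1| + |A_2|$ exists, Menger's Theorem produces $|A_1| + |A_2|$ vertex-disjoint $\{a_1,a_2\}$--$B$ paths in $G^+$; deleting $a_1, a_2$ from each turns them into vertex-disjoint paths in $G$ starting in $A_1 \cup A_2$ and ending in $B$, and by a pigeonhole/defect count exactly $|A_i|$ of them begin at $a_i$ (if fewer than $|A_1|$ begin at $a_1$ then more than $|A_2|$ begin at $a_2$, but only $|A_2|$ vertices of $A_2$ are available as the second vertex on an $a_2$-path, since the paths are vertex-disjoint — contradiction), so after trimming each to its first vertex in $A_1 \cup A_2$ we obtain the required $|A_1| + |A_2|$ vertex-disjoint $(A_1 \cup A_2)$--$B$ paths. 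The main obstacle I anticipate is getting the separator count exactly right — in particular correctly bookkeeping that a vertex of $A_i$ appears on two paths of $\mc{P}_i$ while vertices outside $A_i$ appear on at most one, and that paths of $\mc{P}_i$ genuinely avoid $A_{3-i}$ so that they count as $(A_1\cup A_2)$--$B$ paths that $X$ must intersect; the rest is a clean Menger application. One should also double-check the degenerate cases where some $X_i$ equals $A_i$ (then the $a_i$-side contributes nothing and the bound must still close), which the inequality $|X| + |X \cap A_i| \ge 2|A_i|$ handles automatically.
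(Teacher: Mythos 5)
Your overall strategy --- suppose a separator $X$ with $|X|<|A_1|+|A_2|$ exists and refute it by counting how many paths of the $\mc{P}_i$ the set $X$ can meet --- is sound, but the counting as written does not close. From $|X\setminus A_i|+2|X\cap A_i|\ge 2|A_i|$, i.e.\ $|X|+|X\cap A_i|\ge 2|A_i|$, summing over $i$ and using $|X\cap A_1|+|X\cap A_2|\le|X|$ yields only $3|X|\ge 2(|A_1|+|A_2|)$, i.e.\ $|X|\ge\tfrac{2}{3}(|A_1|+|A_2|)$; your final line ``hence $|X|\ge|A_1|+|A_2|$'' is an arithmetic slip. This is not merely presentational: the two inequalities you derive are genuinely too weak to force the conclusion (for instance, with $|A_1|=|A_2|=n$, taking $X\subseteq A_1\cup A_2$ with $|X\cap A_1|=|X\cap A_2|=\tfrac{2n}{3}$ satisfies both constraints while $|X|=\tfrac{4n}{3}<2n$). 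The missing observation is one you gesture at in your last paragraph but never deploy in the count: every path of $\mc{P}_i$ lives in $G\setminus A_{3-i}$, so a vertex of $X\cap A_{3-i}$ meets \emph{no} path of $\mc{P}_i$, not ``at most one.'' The correct constraint is therefore $|X\setminus(A_1\cup A_2)|+2|X\cap A_i|\ge 2|A_i|$ for each $i$, and summing these over $i=1,2$ gives $2|X|\ge 2(|A_1|+|A_2|)$, exactly as needed. (The paper sidesteps the symmetric count entirely: it sets $X'=X\setminus(A_1\cup A_2)$, notes $|X'|<|A_1\setminus X|+|A_2\setminus X|$, takes without loss of generality $|A_1\setminus X|\ge|A_2\setminus X|$ so that $|X'|<2|A_1\setminus X|$, and then finds among the $2|A_1\setminus X|$ paths of $\mc{P}_1$ ending in $A_1\setminus X$ one that avoids $X'$ and hence all of $X$.)

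A secondary point: the auxiliary graph $G^+$ with apex vertices $a_1,a_2$ does not do what you want, since $|A_1|+|A_2|$ \emph{vertex-disjoint} paths from the two-element set $\{a_1,a_2\}$ cannot exist once $|A_1|+|A_2|>2$; you would need internally disjoint paths (a fan-type statement) or, more simply, apply Menger's Theorem directly to $A_1\cup A_2$ versus $B$ in $G$, as the paper does. That also makes your final ``exactly $|A_i|$ paths begin at $a_i$'' bookkeeping unnecessary: the lemma only asks for $|A_1|+|A_2|$ disjoint $(A_1\cup A_2)$--$B$ paths, with no constraint on how their endpoints distribute between $A_1$ and $A_2$.
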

\begin{proof}
Suppose not. Then by Menger's Theorem there exists a set $X$ separating $A_1\cup A_2$ and $B$ in $G$ such that $|X| < |A_1|+|A_2|$. Let $A_1' :=A_1\setminus X$, $A_2' :=A_2\setminus X$ and let $X' := X\setminus (A_1\cup A_2)$. It follows that 
$$|X'|  = |X| - |A_1\cap X| - |A_2\cap X| < |A_1|+|A_2| - |A_1\cap X| - |A_2\cap X| = |A_1'|+|A_2'|.$$ 
Assume without loss of generality that $|A_1'| \ge |A_2'|$. Then $|X'| < 2|A_1'|$. But now $X'$ separates $A_1'$ from $B$ in $G' := G-(A_2 \cup (A_1\cap X))$. However, the subset $\mc{P}'_1$ of $\mc{P}_1$ consisting of paths with ends in $A_1'$ has size at least $2|A_1'|$. As the paths in $\mc{P}'_1$ are vertex-disjoint, it follows that there exists a path in $\mc{P}'_1$ disjoint from $X'$, a contradiction since $X'$ intersects every $A_1'-B$ path in $G'$ by definition.
\end{proof}

Lemma~\ref{lem:MengerVariant}, while quite a natural variation of Menger's Theorem, has not to our knowledge previously appeared in the literature. We note that Lemma~\ref{lem:MengerVariant} easily generalizes from 2 sets to $k$ sets, though we omit the details. We think this ``redundancy version" is of interest in its own right and could have a number of applications. For our purposes, when building a complete minor, it allows us to connect our previously built parts of the minor as well as our future workspace to the present workspace simply by adding some redundancy to the paths connecting them.

Next we need a suite of results on connectivity. First, we recall some definitions.

\begin{definition}
Let $\ell$ be a positive integer. Let $G$ be a graph and let $\mc{S} = \{(s_i,t_i)\}_{i \in [\ell]}$ be a collection of pairs of vertices of a graph $G$ such that for distinct $i,j\in [\ell]$, we have that $s_i\ne s_j$, $t_i\ne t_j$ , and further if $s_i=t_j$ for some $i,j \in [\ell]$, then $i=j$. An \emph{$\mc{S}$-linkage $\mc{P}$} is a collection of vertex-disjoint paths $\{P_1,\ldots,P_{\ell}\}$ in $G$ such that $P_i$ has ends $s_i$ and $t_i$ for every $i \in [\ell]$. We let $V(\mc{P})$ denote the set $\bigcup_{i=1}^{\ell} V(P_i)$.
\end{definition}

\begin{definition}
A graph G is said to be \emph{$\ell$-linked} if $1 \le \ell \le \vert(G)$ and for any collection $\mc{S} = \{(s_i,t_i)\}_{i \in [\ell]}$ of pairs of vertices of a graph $G$ such that for distinct $i,j\in [\ell]$, we have that $s_i\ne s_j$ and $t_i\ne t_j$, and further if $s_i=t_j$ for some $i,j \in [\ell]$, then $i=j$, there exists an $\mc{S}$-linkage.
\end{definition}

Note that an $\ell$-linked graph is $\ell$-connected. We will need the following result of Bollob\'{a}s and Thomason~\cite{BolTho96} which shows the converse is true up to a constant factor.

\begin{thm}[\cite{BolTho96}]\label{t:linked}
There exists an integer $C=C_{\ref{t:linked}}\ge 1$ such that the following holds: Let $\ell$ be a positive integer. If $G$ is a graph with $\kappa(G) \geq C\ell$, then $G$ is $\ell$-linked.
\end{thm}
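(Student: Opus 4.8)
The plan is to prove the (classical) formally stronger statement that there is an absolute constant $c\ge 1$ such that every $2\ell$-connected graph $G$ with $\e(G)\ge c\ell\cdot\vert(G)$ is $\ell$-linked; Theorem~\ref{t:linked} then follows by taking $C$ to be any integer with $C\ge 2c$, since $\kappa(G)\ge C\ell$ makes $G$ both $2\ell$-connected and of minimum degree at least $C\ell$, hence $\e(G)\ge\tfrac{C}{2}\ell\,\vert(G)\ge c\ell\,\vert(G)$. We may also assume the $2\ell$ terminals $s_1,t_1,\dots,s_\ell,t_\ell$ are distinct: if $s_i=t_i$ for some $i$ we may delete that pair, and the hypothesis ``$s_i=t_j\Rightarrow i=j$'' in the definition of $\ell$-linked rules out every other coincidence.

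For the reduced statement I would argue by minimal counterexample. Suppose it fails and let $G$, together with a family $\mc{S}=\{(s_i,t_i)\}_{i\in[\ell]}$ admitting no $\mc{S}$-linkage, be a counterexample with $\vert(G)+\e(G)$ minimum; write $Z:=\{s_i,t_i:i\in[\ell]\}$, so $|Z|=2\ell$. The driving observation is that contracting an edge disjoint from $Z$ preserves the instance: if $xy\in \e(G)$ with $x,y\notin Z$, then any $\mc{S}$-linkage in $G/xy$ lifts to one in $G$ (on a path through the contracted vertex, substitute $x$, or $y$, or the edge $xy$, according to which of $x,y$ the two path-neighbours of the contracted vertex are adjacent to). Hence $G/xy$ is not a smaller counterexample, so it must fail a hypothesis: either $\kappa(G/xy)<2\ell$, which by lifting the separator forces $x$ and $y$ to lie in a common minimum vertex cut of $G$ (or forces $G$ to be small, a base case); or $\e(G/xy)<c\ell(\vert(G)-1)$, which, since $\e(G/xy)=\e(G)-1-|N_G(x)\cap N_G(y)|$ and $\e(G)\ge c\ell\,\vert(G)$, forces $|N_G(x)\cap N_G(y)|>c\ell-1$. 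The substance of the proof is to push this local dichotomy, using the global $2\ell$-connectivity, to one of two global outcomes: \emph{(a)} $G$ has a vertex cut of order $<2\ell$ across which it splits into strictly smaller graphs still satisfying both hypotheses, handled by induction while routing through the cut exactly the pairs it separates (with Menger's Theorem supplying the connecting paths); or \emph{(b)} the ``many common neighbours'' alternative predominates and yields a set $W$ disjoint from $Z$ with $|W|=\Omega(\ell)$ inducing a subgraph that is itself $\ell$-linked --- indeed almost complete --- which one extracts by first passing to a dense core, using Mader's Lemma~\ref{l:connect} to find a highly connected piece inside it, and then bootstrapping $\ell$-linkedness there by a greedy rerouting argument.

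In case \emph{(b)} the contradiction is finished by routing. Since $G$ is $2\ell$-connected, $|Z|=2\ell$ and $|W|=\Omega(\ell)\ge 2\ell$, Menger's Theorem gives $2\ell$ pairwise vertex-disjoint $Z$--$W$ paths, one starting at each terminal and meeting $W$ only at its other end; as $G[W]$ is $\ell$-linked, we may join inside $W$ the $W$-endpoint of the path from $s_i$ to the $W$-endpoint of the path from $t_i$, for all $i\in[\ell]$ simultaneously. Concatenating, these give $\ell$ vertex-disjoint $s_i$--$t_i$ paths, i.e.\ an $\mc{S}$-linkage, contradicting the choice of $G$.

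The genuinely hard step is the one flagged in the middle paragraph: converting the purely local ``a non-terminal edge either contracts cheaply or sits in a tight cut'' information into the clean global trichotomy (inductive split, or dense $\ell$-linked core, or a small graph handled directly), all the while carrying the $\Theta(\ell\cdot\vert(G))$ edge-count through the recursion so that both hypotheses survive contraction and deletion. This is exactly the content of the Bollob\'as--Thomason argument (and of the sharper Thomas--Wollan refinement, which already gets $c=5$), which is why it is invoked here as a black box; for Theorem~\ref{t:linked} only the existence of a finite constant $c$ is needed, so none of the constant optimization is required.
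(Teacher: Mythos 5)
The paper does not prove Theorem~\ref{t:linked} at all: it is imported verbatim from Bollob\'as and Thomason~\cite{BolTho96}, with a remark that $C_{\ref{t:linked}}=22$ can be extracted from their work and that Thomas and Wollan~\cite{ThoWol05} give $C_{\ref{t:linked}}=10$. So there is no in-paper argument to compare yours against; the only question is whether your write-up stands on its own as a proof. It does not. Your opening reduction (from $\kappa(G)\ge C\ell$ to the edge-density form ``$2\ell$-connected and $\e(G)\ge c\ell\,\vert(G)$ implies $\ell$-linked'') is correct, and your minimal-counterexample setup with the contraction dichotomy is the right skeleton. But the middle paragraph, which you yourself flag as ``the genuinely hard step,'' is exactly the theorem: converting the local alternative (cheap contraction versus a tight cut versus many common neighbours) into the global trichotomy, and in case \emph{(b)} actually producing a set $W$ with $G[W]$ $\ell$-linked, is asserted and then explicitly handed back to \cite{BolTho96} as a black box. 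A proof that invokes the cited theorem's own core argument as a black box is a citation, not a proof; it is the same move the paper makes, just with more scaffolding around it.

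Two smaller points in the parts you do carry out. First, your treatment of the degenerate case $s_i=t_i$ (``delete that pair'') is not quite right as stated: the $\mc{S}$-linkage must consist of pairwise vertex-disjoint paths, and the path for such a pair is the single vertex $s_i$, so the remaining paths must avoid it; you should delete the vertex from the graph as well, costing at most $\ell$ in connectivity, which your constants absorb but which should be said. Second, in case \emph{(b)} the Menger application needs the $2\ell$ paths from $Z$ to $W$ to be internally disjoint from $W$ and from each other, and the pairing of their $W$-endpoints inside the $\ell$-linked $G[W]$ must respect the hypothesis that the $2\ell$ endpoints are distinct; both are standard (truncate each path at its first vertex of $W$) but are part of the routing you are responsible for. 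Neither of these is fatal; the fatal issue is that the theorem's content is deferred rather than proved.
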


For reference, the value of $C_{\ref{t:linked}}$ is not explicitly given in~\cite{BolTho96}, but it is not hard to see from their work that $C_{\ref{t:linked}}=22$ suffices. We note that Thomas and Wollan~\cite{ThoWol05} improve the bounds from~\cite{BolTho96}, and indeed the results of~\cite{ThoWol05} directly imply that $C_{\ref{t:linked}}=10$ satisfies \cref{t:linked}.

We also desire a version where instead of pairs of vertices being connected, we desire sets of vertices to be connected as follows.

\begin{definition}
A graph G is said to be \emph{$(k,\ell)$-knit} if $1 \le \ell \le k \le \vert(G)$ and, whenever $S$ is a set of $k$ vertices of $G$ and $S_1,\ldots,S_t$ is a partition of $S$ into $t \ge \ell$ non-empty parts, then $G$ contains vertex-disjoint connected subgraphs $D_1,\ldots,D_t$ such that $S_i \subseteq V(D_i)$ for each $i\in [t]$. Clearly, a $(2k,k)$-knit graph is $k$-linked.
\end{definition}

The following is an easy corollary of Theorem~\ref{t:linked}.

\begin{thm}\label{t:knitted}
 There exists an integer $C=C_{\ref{t:knitted}}\ge 1$ such that the following holds: Let $k\ge \ell$ be positive integers. If $G$ is a graph with $\kappa(G) \geq Ck$, then $G$ is $(k,\ell)$-knit.
\end{thm}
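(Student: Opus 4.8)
The plan is to derive the knit property from the linkage property of \cref{t:linked} by a vertex-duplication trick. Set $C=C_{\ref{t:knitted}}:=C_{\ref{t:linked}}+1$, let $G$ be a graph with $\kappa(G)\ge Ck$ (so in particular $\vert(G)\ge\kappa(G)+1>k\ge\ell$, and the condition $1\le\ell\le k\le\vert(G)$ from the definition of $(k,\ell)$-knit is automatic), and let $S$ be a set of $k$ vertices partitioned into nonempty parts $S_1,\dots,S_t$ with $t\ge\ell$. If $t=k$ every part is a singleton and the subgraphs induced by those single vertices work, so assume $t<k$. For each part with $k_i:=|S_i|\ge 2$, single out a \emph{center} $c_i\in S_i$ and write $S_i=\{c_i\}\cup\{u^i_1,\dots,u^i_{k_i-1}\}$. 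The aim is to find, for each such $i$, a ``fan'' from $c_i$ to $\{u^i_1,\dots,u^i_{k_i-1}\}$: that is, $k_i-1$ paths from $c_i$ to the $u^i_j$ that are pairwise disjoint except at $c_i$ and otherwise avoid $S$, with the fans of distinct parts pairwise disjoint. The union of a fan is a connected subgraph $D_i\supseteq S_i$; these $D_i$ together with the singleton $D_i$'s will be the required vertex-disjoint connected subgraphs.

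To build the fans I would form an auxiliary graph $G^{*}$ from $G$ by (a) replacing each center $c_i$ with $k_i-1$ pairwise adjacent copies $c_i^{(1)},\dots,c_i^{(k_i-1)}$, each copy having the same neighbours as $c_i$ (copies of a center inheriting the edges to copies of an adjacent center), and then (b) deleting every vertex lying in a singleton part. The point to check is that $G^{*}$ is still highly connected: duplicating a vertex into a clique of copies with identical neighbourhoods does not decrease connectivity, since given a separator $X$ of the duplicated graph its image $\pi(X)$ in $G$ under collapsing copies satisfies $|\pi(X)|\le|X|$ and is again a separator of $G$ — to lift a separating path one reroutes through a surviving copy, which exists because $\deg_G(c_i)\ge\kappa(G)>|X|$. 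Step (b) deletes at most $t\le k$ vertices, so $\kappa(G^{*})\ge Ck-k=C_{\ref{t:linked}}\cdot k\ge C_{\ref{t:linked}}(k-t)$.

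Now I would apply \cref{t:linked} to $G^{*}$ with the $k-t\ge 1$ pairs
$$\mc{S}^{*}=\bigl\{\,(c_i^{(j)},u^i_j)\ :\ k_i\ge 2,\ 1\le j\le k_i-1\,\bigr\},$$
which is a valid collection: the sources $c_i^{(j)}$ are the distinct copy-vertices, the targets $u^i_j$ are the distinct non-center vertices of non-singleton parts, and no source equals a target. Since $\kappa(G^{*})\ge C_{\ref{t:linked}}(k-t)$, we obtain an $\mc{S}^{*}$-linkage $\mc{P}$. Every copy-vertex is an endpoint of exactly one pair, hence internal to no path of $\mc{P}$, so each path of $\mc{P}$ meets the set of copy-vertices only in its own (copy-)endpoint; therefore projecting $\mc{P}$ back to $G$ (collapsing the copies of each $c_i$ to $c_i$) turns each path into a genuine path of $G$, the $k_i-1$ projected paths of a part $i$ meet only at $c_i$ and so form the fan giving $D_i$, and paths coming from different parts stay disjoint because the projection only identifies copies of one common center. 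Finally, the only $S$-vertices removed in step (b) were the singleton ones and every surviving $S$-related vertex is a linkage endpoint, hence internal to no path, so each $D_i$ with $k_i\ge 2$ meets $S$ in exactly $S_i$ and is disjoint from the singleton $D_{i'}$'s; this gives the desired collection and shows $G$ is $(k,\ell)$-knit.

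The only genuinely non-routine step is the connectivity bound for $G^{*}$; everything else is bookkeeping about projecting the linkage. That step is settled by the separator-projection argument sketched above, so the whole statement is indeed a short corollary of \cref{t:linked}.
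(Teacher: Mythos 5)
Your proof is correct, and it supplies exactly the derivation the paper leaves implicit: the paper states \cref{t:knitted} without proof, merely calling it "an easy corollary of \cref{t:linked}," and your clone-the-centers-and-link argument is a valid way to make that corollary precise (the projection of the linkage back to $G$ and the disjointness bookkeeping all check out). One small imprecision worth fixing: the assertion that the image $\pi(X)$ of a separator $X$ of the cloned graph is again a separator of $G$ is not quite right as stated (a side of the separation could consist entirely of clones of a single center, some of whose siblings lie in $X$); the cleaner route is to add clones one at a time and observe that for a single clone $v'$ of $v$, any set $X$ with $|X|<\kappa(G)$ leaves the new graph connected in each of the three cases ($v,v'\notin X$; exactly one of them in $X$; both in $X$), which gives $\kappa$ non-decreasing directly and rescues the step you flagged as the only non-routine one.
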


Next we require the following theorem of Wollan~\cite{W08} on rooted minors.

\begin{thm}[\cite{W08}]\label{t:rootedMinors}
Let $H$ be a fixed graph and $c\ge 1$ be a real number such that every graph $G$ with $\d(G)\ge c$ contains $H$ as a minor. If $G$ is a $\vert(H)$-connected graph with $\d(G)\ge 9c+26833 \cdot \vert(H)$, then for all sets $X\subseteq V(G)$ with $|X|=\vert(H)$ and all bijective maps $\pi: X\rightarrow V(H)$, then there exists an $H$ model in $G$ $\pi$-rooted at $X$.
\end{thm}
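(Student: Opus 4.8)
The plan is to use the $\vert(H)$-connectivity of $G$ to reduce the statement to the same problem inside a very dense and very highly connected subgraph, and then to produce the rooted model there by starting from an ordinary (unrooted) $H$-minor and relocating its branch vertices onto the prescribed set. Write $h:=\vert(H)$ and let $c$ be as in the hypothesis.

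\emph{Step 1: pass to a dense, highly connected core and attach $X$ to it.} By Mader's theorem (Lemma~\ref{l:connect}), $G$ has a subgraph $G^{*}$ with $\kappa(G^{*})\ge \d(G)/2\ge \tfrac12(9c+26833h)$; in particular its density exceeds $c+h$, and $\kappa(G^{*})$ is enormous compared with $h$. Since $\vert(G^{*})\ge\kappa(G^{*})+1>h=|X|$ and $G$ is $h$-connected, every set separating $X$ from $V(G^{*})$ in $G$ has size at least $h$, so Menger's theorem yields $h$ pairwise vertex-disjoint paths from $X$ to $V(G^{*})$; taking the system to have minimum total length, each such path meets $V(G^{*})$ only in its last vertex (and is a single vertex when its start lies in $V(G^{*})$). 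Label the paths $P_1,\dots,P_h$ so that $P_i$ runs from $x_i$ to $y_i\in V(G^{*})$ and set $Y:=\{y_1,\dots,y_h\}$; the $y_i$ are distinct. It now suffices to find pairwise vertex-disjoint connected subgraphs $D_1,\dots,D_h$ of $G^{*}$ with $y_i\in V(D_i)$ and with $D_i$ adjacent to $D_j$ whenever $\pi(x_i)\pi(x_j)\in E(H)$: appending $P_i$ to $D_i$ then yields a $\pi$-rooted $H$-model rooted at $X$, since the interior of each $P_i$ avoids $V(G^{*})$ and hence every $D_j$, the $P_i$ are pairwise disjoint, and $y_i$ lies in $D_i$ only.

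\emph{Step 2: build the rooted model inside the core.} Since $\kappa(G^{*}-Y)\ge\kappa(G^{*})-h$ is comfortably at least $2c$, the graph $G^{*}-Y$ has density at least $c$ and therefore contains an unrooted $H$-model $\{Z_1,\dots,Z_h\}$, automatically disjoint from $Y$. For each edge $\pi(x_i)\pi(x_j)$ of $H$ fix an edge of $G^{*}$ with one end in $Z_i$ and the other in $Z_j$, and let $W_i\subseteq Z_i$ be a minimal connected subgraph of $G^{*}[Z_i]$ containing the $Z_i$-end of every such fixed edge. Put $K:=h+\sum_i\vert(W_i)$; by Theorem~\ref{t:knitted}, $G^{*}$ is $(K,h)$-knit provided $\kappa(G^{*})\ge C_{\ref{t:knitted}}\cdot K$. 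Granting this, apply knittedness to the terminal set $Y\cup\bigcup_i V(W_i)$ with the partition whose $i$-th part is $\{y_i\}\cup V(W_i)$: it produces pairwise disjoint connected subgraphs $D_1,\dots,D_h$ of $G^{*}$ with $\{y_i\}\cup V(W_i)\subseteq V(D_i)$. These $D_i$ realise every edge of $H$ through the fixed edges joining the $W_i$'s, and are rooted at $Y$ — exactly what Step 1 requires.

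\emph{The main difficulty.} The catch in Step 2 is that $K=h+\sum_i\vert(W_i)$ need not be linear in $h$: although $G^{*}$ is highly connected, an individual branch set $Z_i$ may carry little of that connectivity internally, so $G^{*}[Z_i]$ can be ``thin'' and a minimal connected subgraph of it through a handful of prescribed vertices can be a long Steiner tree. Thus the naive argument above only delivers a density threshold of order $c+\sum_i\vert(W_i)$ rather than the advertised $9c+26833h$, and securing the linear dependence on $h$ is the technical heart of the theorem; it forces one never to materialise these long interiors. One route is to contract carefully chosen connected pieces of the $Z_i$ before linking, controlling the resulting drop in connectivity so that enough remains to host the linkage; another, closer to Wollan's original argument, is to pass to an edge-minimal and then vertex-minimal counterexample $(G,X,\pi)$ and exploit its rigidity — every vertex outside $X$ then has degree at least $\d(G)$ (a vertex deletable while keeping $h$-connectivity would give a strictly smaller counterexample), and every minimum separator has size exactly $h=|X|$ and so meets $X$ in a controlled way, which lets one push the $H$-minor supplied by the density hypothesis across a single such separator and relocate its branch vertices onto $X$. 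Carrying out this surgery while tracking constants through the density and linkage inputs is precisely where the explicit $9$ and $26833$ come from; by contrast the reduction in Step 1 and the extraction of the unrooted minor at the start of Step 2 are routine given the results already quoted.
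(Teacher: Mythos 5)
This statement is not proved in the paper at all: it is quoted verbatim from Wollan~\cite{W08}, so there is no in-paper argument to compare against. Judged on its own terms, your proposal is not a proof, and you say as much yourself. Step 1 (Mader plus Menger to reduce to a rooted model inside a highly connected core $G^{*}$, rooted at the landing set $Y$) is fine. The failure is exactly where you flag it in Step 2: after extracting an unrooted $H$-model $\{Z_1,\dots,Z_h\}$ in $G^{*}-Y$, the sets $W_i$ you must carry into the knittedness application are minimal connected (Steiner) subgraphs of $G^{*}[Z_i]$ through the attachment vertices, and nothing bounds $\vert(W_i)$ in terms of $h$ and $c$ --- a branch set of a minor in a dense graph can be an arbitrarily long path, so $K=h+\sum_i\vert(W_i)$ can be comparable to $\vert(G^{*})$. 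Theorem~\ref{t:knitted} then demands $\kappa(G^{*})\ge C_{\ref{t:knitted}}\cdot K$, which the hypothesis $\d(G)\ge 9c+26833\,\vert(H)$ does not come close to supplying. So the argument as written only proves a statement with a density threshold depending on the (uncontrolled) sizes of the branch sets, not one linear in $\vert(H)$.

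The closing paragraph, where you gesture at contracting pieces of the $Z_i$ or at Wollan's minimal-counterexample analysis, is an accurate diagnosis of what is missing, but it is a description of a strategy, not an argument: no contraction scheme is specified, no control of the connectivity drop is given, and the minimal-counterexample route is stated only as "one lets one push the $H$-minor across a separator" without the surgery that makes this work or the bookkeeping that produces the constants $9$ and $26833$. Since the linear dependence on $\vert(H)$ with explicit constants is precisely the content of the theorem (and the reason the present paper can cite it with these constants and use it quantitatively in Lemma~\ref{l:rooted}), the proposal has a genuine gap at its core rather than a repairable slip; to close it you would need to actually carry out one of the two routes you sketch, i.e.\ essentially reproduce Wollan's argument.
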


We also need the following easy corollary of Theorem~\ref{t:density}. We let $K_s+ \ell K_2$ denote the disjoint union of $K_s$ and a matching of size $\ell$.

\begin{cor}
Let $s \ge 2$  and $\ell \ge 0$ be integers. Every graph $G$ with $\d(G) \ge 30 \cdot s\cdot \sqrt{\log s} + 2 \ell$ has a $K_s + \ell K_2$ minor. 
\end{cor}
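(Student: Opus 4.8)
The plan is to peel off a matching of size $\ell$ one edge at a time, deleting both endpoints at each step, and then apply \cref{t:density} to the leftover graph to obtain the $K_s$ minor. The key point---and the reason the constant comes out right---is that deleting the two endpoints of an edge decreases the density $\d$ by at most $2$, whereas naively passing to a subgraph of large minimum degree would lose a factor of $2$ and force a worse bound. The step I expect to require the only moment's thought is this density accounting.

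First I would record the estimate: if $G'$ is a graph with $\vert(G')\ge 3$ and $\d(G')\ge \tfrac12$, and $uv\in E(G')$, then $\d(G'-u-v)\ge \d(G')-2$. Indeed, writing $n=\vert(G')$ and $m=\e(G')$, the graph $G'-u-v$ has $n-2$ vertices and $m-\deg(u)-\deg(v)+1$ edges (the $+1$ because $uv$ is counted twice), whence
$$\d(G')-\d(G'-u-v)=\frac{\deg(u)+\deg(v)-2\d(G')-1}{n-2}\le \frac{2(n-1)-2\d(G')-1}{n-2}=2+\frac{1-2\d(G')}{n-2}\le 2,$$
using $\deg(u),\deg(v)\le n-1$ and $\d(G')\ge\tfrac12$.

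Next I would build the matching greedily. Set $G_0:=G$ and, for $i=1,\dots,\ell$, pick any edge $u_iv_i$ of $G_{i-1}$ and set $G_i:=G_{i-1}-u_i-v_i$. An easy induction using the estimate above shows $\d(G_i)\ge 30s\sqrt{\log s}+2(\ell-i)$ for all $i\le\ell$: at each stage $\d(G_{i-1})\ge 30s\sqrt{\log s}+2>\tfrac12$ (so $G_{i-1}$ has an edge and, since $\vert(G_{i-1})>\d(G_{i-1})$, also $\vert(G_{i-1})\ge 3$, justifying the step), and the estimate costs at most $2$ per deletion. In particular $\d(G_\ell)\ge 30s\sqrt{\log s}$, while $M:=\{u_1v_1,\dots,u_\ell v_\ell\}$ is a matching of size $\ell$ in $G$ with $V(M)\cap V(G_\ell)=\emptyset$, the $2\ell$ vertices $u_1,v_1,\dots,u_\ell,v_\ell$ being pairwise distinct as they are deleted in successive steps.

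Finally, since $s\ge 2$, \cref{t:density} applied to $G_\ell$ yields a $K_s$ minor, that is, pairwise disjoint connected subgraphs $X_1,\dots,X_s$ of $G_\ell$ that are pairwise adjacent. Then $V(X_1),\dots,V(X_s),\{u_1\},\{v_1\},\dots,\{u_\ell\},\{v_\ell\}$ are $s+2\ell$ pairwise disjoint connected vertex sets of $G$ in which $V(X_1),\dots,V(X_s)$ are pairwise adjacent and $\{u_j\}$ is adjacent to $\{v_j\}$ for each $j\in[\ell]$; this is a model of $K_s+\ell K_2$ in $G$, as desired. (The case $\ell=0$ is just \cref{t:density} itself, obtained here with $M=\emptyset$, so it is covered uniformly.) I do not anticipate any genuine difficulty beyond the density bookkeeping isolated above.
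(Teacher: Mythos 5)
Your proof is correct and is essentially the paper's argument: the paper proves the statement by induction on $\ell$, deleting the two endpoints of an edge and noting $\d$ drops by at most $2$, with \cref{t:density} as the base case, which is exactly your greedy peeling unrolled. Your explicit verification of the density drop is more careful than the paper's one-line assertion, but the underlying idea is identical.
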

\begin{proof}
We proceed by induction on $\ell$. If $\ell = 0$, then the result follows from Theorem~\ref{t:density}. So we may assume that $\ell \ge 1$. Since $\d(G) > 0$, there exists an edge $e=uv$ of $G$. Let $G':=G\setminus \{u,v\}$. Note that $\d(G') \ge d(G) - 2 \ge 30 \cdot s\cdot \sqrt{\log s} + 2(\ell-1)$. Hence by induction, $G'$ has a $K_s + (\ell-1) K_2$ minor $M$. But then $M\cup \{uv\}$ is a $K_s + \ell K_2$ minor as desired.
\end{proof}

Now applying Theorem~\ref{t:rootedMinors} with $H= K_s + \ell K_2$ and $c= 30 \cdot s\cdot \sqrt{\log s} + 2 \ell$ yields the following lemma.

\begin{lem}\label{l:rooted} There exists $C=C_{\ref{l:rooted}} >0$ such that the following holds:
	Let $G$ be a graph,  let $\ell \geq s \geq 2$ be positive integers.  Let $s_1,\ldots, s_{\ell},$ $t_1,\ldots,t_{\ell},$  $r_1,\ldots, r_s \in V(G)$ be distinct, except possibly $s_i=t_i$ for some number of $i \in [\ell]$.
	If $$\kappa(G) \geq C \cdot\max\{\ell, s\sqrt{\log s}\},$$	
	then there exists a $K_{s}$ model $\mc{M}$ in $G$ rooted at $\{r_1,\ldots,r_s\}$ and an $\{(s_i,t_i)\}_{i \in [\ell]}$-linkage $\mc{P}$ in $G$ such that $\mc{M}$ and $\mc{P}$ are vertex-disjoint. 
\end{lem}

For convenience, we desire a slight strengthening. To simplify its statement, we introduce a new term \emph{woven} as follows. This concept is used in the both of the proofs of Lemma~\ref{lem:inseparable2} and Theorem~\ref{t:tech}.

\begin{definition}
Let $a,b$ be nonnegative integers. We say a graph $G$ is \emph{$(a,b)$-woven} if the following holds:
for every three sets of vertices $R=\{r_1,\ldots, r_a\}$, $S=\{s_1,\ldots, s_b\},$ $T=\{t_1,\ldots,t_b\}$ in $V(G)$ such that if $s_i=t_j$ for some $i,j \in [b]$, then $i=j$, there exists a $K_{a}$ model $\mc{M}$ in $G$ rooted at $R$ and an $\{(s_i,t_i)\}_{i \in [b]}$-linkage $\mc{P}$ in $G$ such that $V(\mc{M})\cap V(\mc{P}) = R\cap (S\cup T)$. 
\end{definition}   

Here is the strengthening.

\begin{lem}\label{l:rooted3} 
There exists an integer $C = C_{\ref{l:rooted3}} \ge 1$ such that the following holds: Let
$a,b \ge 2$ be integers. If $G$ is a graph with
$$\kappa(G) \geq C \cdot\max\{a\sqrt{\log a}, b\},$$
then $G$ is $(a,b)$-woven.
\end{lem}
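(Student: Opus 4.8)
The plan is to derive Lemma~\ref{l:rooted3} from Lemma~\ref{l:rooted} by boosting the connectivity slightly so as to absorb the finitely many coincidences allowed in the definition of \emph{woven}. The only discrepancy between the two statements is that Lemma~\ref{l:rooted} requires the roots $r_1,\ldots,r_s$ and the linkage endpoints $s_i,t_i$ to be distinct (apart from the permitted equalities $s_i=t_i$), whereas in the definition of $(a,b)$-woven we must also allow some $r_j$ to coincide with some $s_i$ or $t_i$, and we must output a $K_a$ model $\mc{M}$ and an $\{(s_i,t_i)\}$-linkage $\mc{P}$ that meet \emph{exactly} in $R\cap(S\cup T)$ rather than being fully vertex-disjoint.

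First I would set $C_{\ref{l:rooted3}}$ to be a suitably large constant multiple of $C_{\ref{l:rooted}}$ (say $C_{\ref{l:rooted3}} = 3C_{\ref{l:rooted}}$), so that $\kappa(G)\ge C_{\ref{l:rooted3}}\cdot\max\{a\sqrt{\log a},b\}$ gives plenty of room. Given $R,S,T$ as in the definition, let $W := R\cap(S\cup T)$; these are the vertices that must lie in both $\mc{M}$ and $\mc{P}$. For each $r_j\in W$, $r_j$ equals $s_i$ and/or $t_i$ for the (unique, by the hypothesis on $S,T$) index $i$ with that property. The idea is to ``split off'' these shared vertices: pass to $G' := G - W$, which still satisfies $\kappa(G')\ge \kappa(G)-|W| \ge \kappa(G)-a \ge 2C_{\ref{l:rooted}}\cdot\max\{a\sqrt{\log a},b\}$ (using $a\le \kappa(G)/C_{\ref{l:rooted3}}$ and the size of $C_{\ref{l:rooted3}}$). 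For each $r_j\in W$, since $\kappa(G)\ge |W|+1$, the vertex $r_j$ has at least $\kappa(G)-|W|+1 \ge 2b+a$ neighbors in $G'$; greedily choose for each such $r_j$ a private neighbor $r_j'\in V(G')$, all distinct and avoiding $S\cup T$ and the other chosen private neighbors (possible since there are at most $a$ of them and each has enough neighbors). Now replace $R$ by $R'$, where $r_j\in W$ is replaced by $r_j'$ and $r_j\notin W$ is kept; and replace each endpoint $s_i$ (resp.\ $t_i$) that lies in $W$ by the private neighbor $r_j'$ of the corresponding $r_j$. After this substitution, the new roots $R'$ and new endpoints $\{(s_i',t_i')\}$ are distinct in $G'$ except for the permitted equalities $s_i'=t_i'$ (which only persist when $s_i=t_i$ already), so Lemma~\ref{l:rooted} applies to $G'$ with parameters $a$ and $b$: we obtain a $K_a$ model $\mc{M}'$ rooted at $R'$ and an $\{(s_i',t_i')\}$-linkage $\mc{P}'$ in $G'$, vertex-disjoint. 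Finally, reattach: for each $r_j\in W$, add $r_j$ back, use the edge $r_jr_j'$ to absorb $r_j'$ into the branch set of $\mc{M}$ rooted there (so the branch set becomes rooted at $r_j$), and extend the corresponding path(s) of $\mc{P}'$ by the edge $r_j'r_j$ (or, if $r_j=s_i=t_i$, replace the whole path by the single vertex $r_j$). The resulting $\mc{M},\mc{P}$ meet exactly in $W=R\cap(S\cup T)$, as required.

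The one bookkeeping subtlety — and the step I expect to need the most care — is the case where a single $r_j\in W$ equals \emph{both} $s_i$ and $t_i$ for the same $i$ (i.e.\ a ``degenerate'' pair routed through a root). Then the path $P_i$ of the linkage is just the single vertex $r_j$, which automatically lies in $V(\mc{M})$, and this is consistent with the requirement $V(\mc{M})\cap V(\mc{P})=R\cap(S\cup T)$; so in the reduction we simply drop such pairs before invoking Lemma~\ref{l:rooted} and reinstate the trivial one-vertex path afterwards. The other mild subtlety is verifying that a root $r_j\in W$ cannot equal $s_i$ for one index and $t_{i'}$ for a \emph{different} index $i'\ne i$: this is exactly ruled out by the hypothesis ``if $s_i=t_j$ then $i=j$'' in the definition of woven, so each $r_j\in W$ interacts with at most one pair index, and the private-neighbor substitution is well-defined. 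With these points dispatched, the connectivity slack ensures every greedy choice succeeds, and the lemma follows.
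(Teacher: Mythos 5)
There is a genuine gap, and it sits exactly at the step you flagged as needing the most care. When $r_j\in W$ equals $s_i$ (with $s_i\ne t_i$), you substitute the \emph{same} private neighbor $r_j'$ for both the root slot and the linkage-endpoint slot. But Lemma~\ref{l:rooted} requires the roots $r_1,\ldots,r_s$ to be distinct from all the $s_i,t_i$ (the only permitted coincidence is $s_i=t_i$), so with $r_j'$ appearing both in $R'$ and as $s_i'$ the lemma simply cannot be invoked; your claim that ``the new roots $R'$ and new endpoints are distinct in $G'$ except for the permitted equalities $s_i'=t_i'$'' is false. And the problem is not merely formal: Lemma~\ref{l:rooted} outputs a model and a linkage that are \emph{vertex-disjoint}, so no single vertex can serve both roles. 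Even if you somehow obtained such objects, after reattachment the extended path would contain $r_j'$ (it ends there before you append the edge $r_j'r_j$) and the branch set rooted at $r_j$ would also contain $r_j'$, so $V(\mc{M})\cap V(\mc{P})$ would contain $r_j'\notin R\cap(S\cup T)$, violating the definition of $(a,b)$-woven.

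The repair is essentially the paper's proof, which is close in spirit to yours but distributes the roles differently: leave $S$ and $T$ untouched, delete from $G$ only $R\setminus(S\cup T)$ to form $G'$, and choose a private neighbor $r_i'$ for \emph{every} root $r_i$, with all the $r_i'$ distinct and avoiding $R\cup S\cup T$ (the minimum degree $\ge 2a+2b$ makes this greedy choice possible). Now the roots $R'=\{r_i'\}$ and the endpoints $\{s_i,t_i\}$ are genuinely distinct, Lemma~\ref{l:rooted} applies in $G'$, and only the model is modified afterwards: each branch set $M_i'$ is extended by the edge $r_ir_i'$. The linkage is left alone, so the only new intersections are the vertices of $R$ that survive in $G'$ and lie on paths, which is precisely $R\cap(S\cup T)$. (Your alternative of keeping your scheme but using \emph{two} distinct private neighbors per shared vertex, one for each role, would also work, but then you must argue the reattached path and branch set meet only in $r_j$ itself.)
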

\begin{proof}
Let $C_{\ref{l:rooted3}} := \max \{C_{\ref{l:rooted}}+2, 4\}$. Since $C_{\ref{l:rooted3}}\ge 4$, we have that $G$ is $(2a+2b)$-connected and hence $\delta(G)\ge 2a+2b$. 

Let $R=\{r_1,\ldots, r_a\}$, $S=\{s_1,\ldots, s_b\},$ $T=\{t_1,\ldots,t_b\}$ be subsets of $V(G)$ such that if $s_i=t_j$ for some $i,j \in [b]$, then $i=j$.

Since $\delta(G)\ge 2a+2b$, there exists a set $R' = \{r'_i: i\in [a]\}$ disjoint from $R\cup S\cup T$ such that $r'_i$ is a neighbor of $r_i$ for every $i\in [a]$. Let $G':=G\setminus (R\setminus (S\cup T))$. Since $G'$ is $C_{\ref{l:rooted}}$-connected as $C_{\ref{l:rooted3}}\ge C_{\ref{l:rooted}}+2$, we have by Lemma~\ref{l:rooted} that there exists a $K_{a}$ model $\mc{M}'$ in $G'$ rooted at $R'$ and an $\{(s_i,t_i)\}_{i \in [b]}$-linkage $\mc{P}$ in $G'$ such that $\mc{M'}$ and $\mc{P}$ are disjoint. 

For each $i\in [a]$, let $M'_i$ be the subgraph in $\mc{M'}$ containing $r'_i$ and then let $M_i := M'_i + r_ir'_i$. Now let $\mc{M} := \{M_i: i\in [a]\}$. Note that $\mc{M}$ is a $K_a$ model in $G$ rooted at $R$ and $V(\mc{M})\cap V(\mc{P}) = R\cap (S\cup T)$. Thus $G$ is $(a,b)$-woven as desired.
\end{proof}

We note that Lemma~\ref{l:rooted3} implies Theorem~\ref{t:linked} by setting $a=2$; similarly setting $b=2$, Lemma~\ref{l:rooted3} yields a rooted version (assuming connectivity instead of average degree) of Theorem~\ref{t:KT} up to the constant factor. 

Next we need the following lemma of Kawarabayashi~\cite{Kaw07}, whose proof was inspired by the proof idea of Robertson and Seymour~\cite{GM13}.

\begin{lem}[\cite{Kaw07}]\label{lem:MinorToWoven}
Let $a\ge 1$ be an integer. If $G$ is a graph with $\kappa(G)\ge a$ and $G$ has a $K_{2a}$ minor, then $G$ is $(a,0)$-woven. 
\end{lem}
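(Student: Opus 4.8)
The plan is to prove the statement directly from the definitions, using the surplus of $a$ branch sets in the $K_{2a}$ minor as spare parts for a linkage argument. Since $b=0$, the sets $S$ and $T$ are empty and the required $\{(s_i,t_i)\}$-linkage is vacuous, so it suffices to show that for every $R=\{r_1,\dots,r_a\}\subseteq V(G)$ there is a $K_a$ model $\mathcal M=\{D_1,\dots,D_a\}$ in $G$ rooted at $R$, say with $r_i\in V(D_i)$. The case $a=1$ is trivial (take $D_1=\{r_1\}$), so assume $a\ge 2$. Fix a $K_{2a}$ model $\mathcal B=\{B_1,\dots,B_{2a}\}$ and write $W:=V(\mathcal B)$; recall the $B_j$ are connected and pairwise adjacent. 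I would aim to produce $a$ pairwise vertex-disjoint paths $Q_1,\dots,Q_a$ with $Q_i$ running from $r_i$ to a vertex $w_i\in W$, internally disjoint from $W$, and with the endpoints $w_1,\dots,w_a$ lying in pairwise distinct branch sets $B_{\tau(1)},\dots,B_{\tau(a)}$. Then $D_i:=B_{\tau(i)}\cup Q_i$ are pairwise disjoint, connected, pairwise adjacent (inheriting adjacency from the $K_{2a}$ model), and satisfy $r_i\in V(D_i)$, which is exactly the desired rooted $K_a$ model.

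First I would dispose of the degenerate possibility that some $r_i$ already lies in $W$: by a short preprocessing reduction one may assume $R\cap W=\emptyset$; indeed, if $r_i\in B_j$ one can reserve $B_j$ as a candidate branch set for $r_i$, and if two roots lie in the same branch set one reroutes within that branch set and an unused one, so this bookkeeping is routine. With $R\cap W=\emptyset$ and $|R|=a\le |W|$, since $\kappa(G)\ge a$ Menger's Theorem yields $a$ pairwise vertex-disjoint $R$--$W$ paths, and after trimming we may take them internally disjoint from $W$ and meeting $W$ only at their far ends. The idea is then to choose the $K_{2a}$ model $\mathcal B$ \emph{together with} such a family of paths so as to maximize the number of distinct branch sets hit by the endpoints $w_i$, breaking ties by minimizing the total number of vertices used.

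The main obstacle, and the technical heart of the proof (following the linkage idea of Robertson and Seymour), is to show that an optimal such choice hits all $a$ branch sets distinctly. Suppose not: then two paths $Q_i,Q_j$ end in a common branch set $B_\ell$, and since at most $a-1$ branch sets are hit, at least $a+1$ branch sets carry no endpoint; in particular some $B_m\neq B_\ell$ carries none. Using that $B_\ell$ and $B_m$ are connected and adjacent, say via an edge $xy$ with $x\in B_\ell$ and $y\in B_m$, one reroutes $Q_j$ so that it avoids $B_\ell$ and instead terminates in $B_m$, at the cost of reassigning some vertices of $B_\ell$ along the way while absorbing $B_m$ and the edge $xy$ in compensation; because $B_m$ was entirely unused, every other branch set is still seen, and one checks the new configuration is strictly better for the chosen optimality measure, a contradiction. (Equivalently, one can phrase the whole step as finding $a$ vertex-disjoint paths from $R$ to the $2a$ vertices obtained by contracting each $B_j$, exploiting that the surplus of $a$ contracted vertices provides the room a Menger/linkage argument needs even though contraction may have lowered connectivity.) Making this exchange argument fully precise is where essentially all the care lies; once it is in place the assembly described in the first paragraph finishes the proof, and the reduction in the second paragraph removes the assumption $R\cap W=\emptyset$.
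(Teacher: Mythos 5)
Your overall strategy is the right one, and it matches the shape of the argument in Kawarabayashi's paper that this lemma is cited from (note the paper gives no proof of its own here, only the reference to \cite{Kaw07}): the assembly in your first paragraph is fine, and the entire content of the lemma is the claim that the $a$ disjoint $R$--$W$ paths can be arranged to terminate in $a$ distinct branch sets. That claim is precisely what you do not establish, and the exchange you sketch fails as stated. If $Q_i$ and $Q_j$ both end in $B_\ell$ and you extend $Q_j$ through $B_\ell$ and across an edge $xy$ into an unused $B_m$, the extended path now contains vertices of $B_\ell$; to keep the configuration feasible (paths internally disjoint from $V(\mathcal{B})$, equivalently the eventual branch sets pairwise disjoint) you must delete those vertices from $B_\ell$, and what remains of $B_\ell$ may be disconnected, or may no longer be adjacent to every other branch set. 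So the ``rerouted'' object need not be a $K_{2a}$ model together with disjoint paths at all, and the claimed contradiction with your optimality measure does not follow: you are comparing the optimum against a configuration that may lie outside the feasible set. The same difficulty recurs, in miniature, in the preprocessing you call routine when two roots lie in a single branch set. The parenthetical alternative (contracting each $B_j$ and applying Menger in the contracted graph) also does not go through, for the reason you yourself flag: a small separator in the contracted graph pulls back to a possibly large separator in $G$, so $a$-connectivity of $G$ gives nothing directly, and you never explain how the surplus of $a$ branch sets repairs this.

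The known proofs handle the exchange by a more delicate extremal choice: take the model together with the paths so that the total set of vertices used is minimal, deduce structural facts (each branch set induces a tree all of whose leaves are ``needed,'' either carrying a path endpoint or supplying the unique attachment to some other branch set), and only then run the splitting/reassignment over the at least $a$ endpoint-free branch sets, checking that all $\binom{2a}{2}$ adjacencies survive. Your tie-breaker of minimizing total vertices used points in that direction, but the central step of the proof is asserted rather than proved, so this should be counted as a genuine gap rather than a complete argument.
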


Finally we need the following useful lemma about woven subgraphs which shows that if a subgraph $H$ of a graph $G$ is $(a,b)$-woven then one can ``weave" inside $H$ a rooted $K_a$ minor while preserving a given $b$-linkage in $G$. This lemma is quite useful for building large complete minors from small woven subgraphs.

\begin{lem}\label{l:woven}
Let $a,b$ be positive integers. Let $G$ be a graph. Let $S=\{s_1,\ldots, s_b\}$, $T=\{t_1,\ldots, t_b\}$ be disjoint sets of vertices in $G$. Let $\mc{P}$ be an $\{(s_i,t_i)\}_{i \in [b]}$-linkage in $G$. If $H$ is a subgraph of $G$ that is $(a,b)$-woven and $R=\{r_1,\ldots, r_a\}\subseteq V(H)$, then there exists an $\{(s_i,t_i)\}_{i \in [b]}$-linkage $\mc{P'}$ in $G$ and a $K_a$ model $\mc{M}$ in $G$ rooted at $R$ such that $V(\mc{P'})\cap V(\mc{M}) \subseteq R\cap V(\mc{P})$ and $V(\mc{P'})\subseteq V(H)\cup V(\mc{P})$.
\end{lem}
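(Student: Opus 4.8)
The plan is to reduce the statement for a general linkage $\mc{P}$ to the definition of $(a,b)$-woven applied inside $H$, by carefully choosing which portions of $\mc{P}$ to reroute through $H$ and which to leave untouched. First I would look at how each path $P_i \in \mc{P}$ (with ends $s_i, t_i$) interacts with $V(H)$. If $P_i$ is disjoint from $V(H)$ we keep it as is. If $P_i$ meets $V(H)$, let $u_i$ be the first vertex of $P_i$ (traversing from $s_i$) that lies in $V(H)$ and $w_i$ the last such vertex; the subpath of $P_i$ from $s_i$ to $u_i$ and the subpath from $w_i$ to $t_i$ lie outside $V(H)$ except at their endpoints $u_i, w_i$. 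The idea is to discard the middle portion of $P_i$ between $u_i$ and $w_i$ and instead re-link $u_i$ to $w_i$ by a path inside $H$, obtained from the woven property; then stitch these inside-$H$ paths together with the outside stubs to recover a genuine $\{(s_i,t_i)\}$-linkage $\mc{P}'$.

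The key technical maneuver is to invoke the $(a,b)$-woven property of $H$ with the roots $R = \{r_1,\ldots,r_a\}$ and with the linkage pairs being $\{(u_i, w_i)\}$ for those $i$ where $P_i$ meets $V(H)$ (and, for the indices where $P_i$ avoids $V(H)$, simply no pair is needed). One must check the admissibility condition on these pairs — i.e., that the $u_i$'s are distinct, the $w_i$'s are distinct, and $u_i = w_j$ only when $i=j$; this follows because distinct paths $P_i, P_j$ are vertex-disjoint. The woven property then yields a $K_a$ model $\mc{M}$ in $H$ rooted at $R$ together with a $\{(u_i,w_i)\}$-linkage $\mc{Q}$ inside $H$, with $V(\mc{M}) \cap V(\mc{Q}) = R \cap (\{u_i\} \cup \{w_i\})$. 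Define $\mc{P}'$ by replacing, in each $P_i$ that meets $V(H)$, the middle segment by the corresponding $\mc{Q}$-path from $u_i$ to $w_i$, and keeping the outside stubs; paths disjoint from $V(H)$ stay in $\mc{P}'$ verbatim.

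It remains to verify the three conclusions. The paths in $\mc{P}'$ are pairwise vertex-disjoint: the outside stubs come from disjoint $P_i$'s and lie outside $V(H)$ (except at endpoints in $V(H)$), while the inside pieces are the disjoint $\mc{Q}$-paths; the only possible collisions are at the junction vertices $u_i, w_i$, which belong to a unique $P_i$. The containment $V(\mc{P}') \subseteq V(H) \cup V(\mc{P})$ is immediate since the new pieces lie in $V(H)$ and the retained stubs lie in $V(\mc{P})$. Finally, for $V(\mc{P}') \cap V(\mc{M})$: since $\mc{M} \subseteq H$, a vertex of $\mc{M}$ on $\mc{P}'$ is either on an inside piece (a $\mc{Q}$-path), in which case by the woven guarantee it lies in $R \cap (\{u_i\} \cup \{w_i\}) \subseteq R \cap V(\mc{P})$, or on an outside stub lying in $V(H)$ only at its endpoint $u_i$ or $w_i$ — again in $R \cap V(\mc{P})$. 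Hence $V(\mc{P}') \cap V(\mc{M}) \subseteq R \cap V(\mc{P})$, as required.

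The main obstacle I anticipate is bookkeeping the boundary behavior cleanly: handling the degenerate cases where $s_i$ or $t_i$ itself lies in $V(H)$ (so a stub is a single vertex), where $u_i = w_i$ (the path touches $V(H)$ only once, so no inside re-linking is needed and we just keep $P_i$), and where some $r_j$ coincides with a $u_i$ or $w_i$ — ensuring in each case that the intersection bound $V(\mc{P}') \cap V(\mc{M}) \subseteq R \cap V(\mc{P})$ is not violated. One should also double-check the admissibility hypothesis needed to apply the $(a,b)$-woven definition is genuinely satisfied by the pairs $(u_i, w_i)$, and pad out with dummy pairs or shrink $b$ as needed if fewer than $b$ of the $P_i$ meet $H$ (the definition allows arbitrary such triples, so taking a sub-collection is harmless provided we recheck distinctness). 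These are all routine once the construction is set up, but they are the fussy part.
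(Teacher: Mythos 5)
Your proposal is correct and follows essentially the same route as the paper's proof: identify for each path its first and last intersection with $V(H)$ (the paper's $s_i'$ and $t_i'$ are your $u_i$ and $w_i$), apply the $(a,b)$-woven property of $H$ to those pairs together with the roots $R$, and concatenate the outside stubs with the inside linkage. The degenerate cases you flag are handled by the definition of wovenness (which permits $s_i = t_i$) exactly as you suggest, so there is nothing further to add.
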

\begin{proof}
For $i\in [b]$, let $P_i$ denote the path in $\mathcal{P}$ containing $\{s_i,t_i\}$. Let $I:=\{i\in[b]: V(P_i)\cap V(H)\ne \emptyset \}$. For $i\in[I]$, let $s'_i$ be the vertex in $V(P_i)\cap V(H)$ closest to $s_i$ in $P_i$, and similarly let $t'_i$ be the vertex in $V(P_i)\cap V(H)$ closest to $t_i$ in $P_i$. 

Since $H$ is $(a,b)$-woven, there exists a $K_{a}$ model $\mc{M}$ in $H$ rooted at $R$ and an $\{(s'_{i},t'_{i})\}_{i \in I}$-linkage $\mc{P}_1$ in $H$ such that $V(\mc{M})\cap V(\mc{P}_1) = R\cap (\{s'_i:i\in I\}\cup \{t'_i:i\in I\}) \subseteq R\cap V(\mc{P})$. 

Now for each $i\in I$, let $P'_i$ be the path obtained from concatenating the subpath of $P_i$ from $s_i$ to $s'_i$, the path in $\mc{P}_1$ connecting $s'_i$ to $t'_i$, and the subpath of $P_i$ from $t'_i$ to $t_i$. Note that by construction $V(\mc{P'}) \subseteq V(H)\cup V(\mc{P})$. Hence $\mc{P}:=(P_i: i\in [b])$ and $\mc{M}$ are as desired.
\end{proof}

\section{Finding a Minor in Chromatic-Inseparable Graphs}\label{s:inseparable}

In this section, we prove Lemma~\ref{lem:inseparable2} by proving a stronger inductive form, Lemma~\ref{lem:inseparable3}. First we need some auxiliary lemmas.

\subsection{Auxiliary Lemmas}

First we need the following corollary of Theorem~\ref{t:SmallConn2} which allows us to extract many small highly-connected subgraphs from a $K_t$-minor-free graph provided the chromatic number is large enough.

\begin{cor}\label{c:SmallConn}
Let $t\ge 3$ be an integer. Let $k\ge t$ be an integer and let $r=\lceil \sqrt{\log t} \rceil$. Let $G$ be a graph and let
$$g_{\ref{c:SmallConn}}(G,t) := \max_{H\subseteq G} \left\{ \frac{\chi(H)}{t}: \vert(H)\le C_{\ref{t:SmallConn2}}^2\cdot  t\cdot  \log^4 t,~H \text{ is $K_t$-minor-free }\right\}.$$ 
If $\chi(G) \ge 4\cdot C_{\ref{t:SmallConn2}} \cdot k\cdot (1+g_{\ref{c:SmallConn}}(G,t))$ and $G$ contains no $K_t$ minor, then $G$ contains $r$ vertex-disjoint $k$-connected subgraphs $H_1,\ldots, H_{r}$ with $\vert(H_i) \le C_{\ref{t:SmallConn2}}^2\cdot t \cdot \log^3 t$ for every $i\in [r]$.
\end{cor}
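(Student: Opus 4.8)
The plan is to extract the $r$ subgraphs greedily, one at a time, using Theorem~\ref{t:SmallConn2} as the engine at each step and using the fact that deleting a subgraph of bounded size cannot decrease the chromatic number by too much. Concretely, I would maintain an invariant: after having found $i$ disjoint $k$-connected subgraphs $H_1,\dots,H_i$ each with $\vert(H_j)\le C_{\ref{t:SmallConn2}}^2 t\log^3 t$, the remaining graph $G_i := G\setminus\bigcup_{j\le i}V(H_j)$ still has chromatic number large enough to run Theorem~\ref{t:SmallConn2} again. Since $r=\lceil\sqrt{\log t}\rceil$ and each $H_j$ has at most $C_{\ref{t:SmallConn2}}^2 t\log^3 t$ vertices, the total number of deleted vertices over all $r$ steps is at most $r\cdot C_{\ref{t:SmallConn2}}^2 t\log^3 t \le C_{\ref{t:SmallConn2}}^2 t\log^4 t$ (using $r\le\log t$ for $t$ large, adjusting constants as needed).

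The key point is translating ``high chromatic number'' into ``high density'' so that Theorem~\ref{t:SmallConn2} applies. Suppose at step $i$ the graph $G_i$ has $\chi(G_i)\ge 2C_{\ref{t:SmallConn2}}k\cdot(1+g_{\ref{c:SmallConn}}(G,t))$; I claim $G_i$ contains a subgraph of density at least $C_{\ref{t:SmallConn2}}k$. Indeed, if every subgraph of $G_i$ had density less than $C_{\ref{t:SmallConn2}}k$, then $G_i$ would be $(2C_{\ref{t:SmallConn2}}k-1)$-degenerate, hence $2C_{\ref{t:SmallConn2}}k$-colorable --- but this need not contradict the chromatic lower bound directly, so instead I should argue more carefully: take a subgraph $H$ of $G_i$ of maximum density $d(H)=:\d$; then $G_i$ is $2\d$-degenerate so $\chi(G_i)\le 2\d+1$, giving $\d \ge \tfrac12(\chi(G_i)-1) \ge C_{\ref{t:SmallConn2}}k$ after absorbing the $+1$ into constants. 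Now apply Theorem~\ref{t:SmallConn2} to this subgraph $H$ of $G_i$ (which is still $K_t$-minor-free) with the parameter $k$: since $\d(H)\ge C_{\ref{t:SmallConn2}}\cdot k$ and $k\ge t$, we get a nonempty $k$-connected subgraph $H_{i+1}\subseteq H\subseteq G_i$ with $\vert(H_{i+1})\le C_{\ref{t:SmallConn2}}^2 t\log^3 t$, as required.

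It remains to verify the invariant is preserved, i.e. that after deleting $H_{i+1}$ we still have enough chromatic number for the next step, for all $i < r$. Deleting $H_{i+1}$ from $G_i$ can drop the chromatic number, but since $H_{i+1}$ is itself $K_t$-minor-free and has $\vert(H_{i+1})\le C_{\ref{t:SmallConn2}}^2 t\log^3 t \le C_{\ref{t:SmallConn2}}^2 t\log^4 t$, we have $\chi(H_{i+1})\le t\cdot g_{\ref{c:SmallConn}}(G,t)\le t(1+g_{\ref{c:SmallConn}}(G,t))$ by the definition of $g_{\ref{c:SmallConn}}$. Hence $\chi(G_{i+1})\ge \chi(G_i)-\chi(H_{i+1}) \ge \chi(G_i) - t(1+g_{\ref{c:SmallConn}}(G,t))$. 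Starting from $\chi(G_0)=\chi(G)\ge 4C_{\ref{t:SmallConn2}}k(1+g_{\ref{c:SmallConn}}(G,t))$ and subtracting at most $t(1+g_{\ref{c:SmallConn}}(G,t)) \le k(1+g_{\ref{c:SmallConn}}(G,t))$ at each of the $r-1 \le \sqrt{\log t}$ intermediate steps, and noting $\sqrt{\log t} \le C_{\ref{t:SmallConn2}}k$ trivially (even $k \ge t \ge 3 > \sqrt{\log t}$ for all relevant $t$, or absorb into the factor $4$), we see $\chi(G_i) \ge 2C_{\ref{t:SmallConn2}}k(1+g_{\ref{c:SmallConn}}(G,t))$ remains true throughout, so every step succeeds.

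The main obstacle, I expect, is bookkeeping the constants so that the chromatic slack of $4C_{\ref{t:SmallConn2}}k(1+g)$ genuinely survives all $r-1$ deletions; one has to be slightly careful that the quantity subtracted per step is controlled by $k$ (not just by $t$) and that $r$ subtractions do not exhaust the budget --- this works precisely because $r\approx\sqrt{\log t}$ is tiny compared with the multiplicative constant $4C_{\ref{t:SmallConn2}}$ once one checks $r-1$ is at most, say, $2C_{\ref{t:SmallConn2}}$ is false in general, so instead the correct accounting is that after $j$ steps $\chi(G_j)\ge (4C_{\ref{t:SmallConn2}} - j)\cdot k(1+g)/\text{(something)}$; the cleanest fix is to demand at the outset $\chi(G)\ge 4C_{\ref{t:SmallConn2}}k(1+g)$ and observe $4C_{\ref{t:SmallConn2}}k(1+g) - (r-1)t(1+g) \ge 4C_{\ref{t:SmallConn2}}k(1+g) - r\cdot k(1+g) \ge (4C_{\ref{t:SmallConn2}}-r)k(1+g)$, which is $\ge 2C_{\ref{t:SmallConn2}}k(1+g)$ provided $r\le 2C_{\ref{t:SmallConn2}}$; if $r>2C_{\ref{t:SmallConn2}}$ (i.e. $t$ very large) one instead replaces the crude bound $t \le k$ by the sharper observation from the proof of Theorem~\ref{t:SmallConn2} that the relevant density regime forces $k\le t\sqrt{\log t}$, giving room, or simply enlarges the constant $4$ in the hypothesis to something like $2C_{\ref{t:SmallConn2}}+2$ times a $\log\log$ factor --- but since the statement already fixes the constant as $4C_{\ref{t:SmallConn2}}$, the right move is to note $r-1 < \sqrt{\log t} < 2C_{\ref{t:SmallConn2}}$ fails only for astronomically large $t$ and the statement is really used with the understanding that $C_{\ref{t:SmallConn2}}$ is large enough; I would simply pick $C_{\ref{t:SmallConn2}}$ large enough that $r=\lceil\sqrt{\log t}\rceil \le 2C_{\ref{t:SmallConn2}}$ is not required by instead iterating with the per-step loss bounded against $k$ and verifying the telescoping sum directly.
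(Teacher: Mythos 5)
Your overall strategy (greedily extracting one small $k$-connected subgraph at a time via Theorem~\ref{t:SmallConn2}, converting chromatic number to density through degeneracy) matches the paper's in spirit, but the accounting has a genuine gap that you yourself flag in the last paragraph and then do not resolve. Charging a loss of $t\cdot(1+g_{\ref{c:SmallConn}}(G,t))$ separately at each of the $r-1\approx\sqrt{\log t}$ steps gives a total loss of order $\sqrt{\log t}\cdot t\cdot(1+g)$, and when $k$ is close to $t$ (the hypothesis only guarantees $k\ge t$) this exceeds the available slack $4C_{\ref{t:SmallConn2}}k(1+g)-2C_{\ref{t:SmallConn2}}k(1+g)=2C_{\ref{t:SmallConn2}}k(1+g)$ as soon as $\sqrt{\log t}>2C_{\ref{t:SmallConn2}}$, i.e.\ for all large $t$. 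None of your proposed escapes works: the bound $k\le t\sqrt{\log t}$ is an upper bound on $k$ and goes the wrong way; you cannot ``pick $C_{\ref{t:SmallConn2}}$ large enough'' since it is fixed by Theorem~\ref{t:SmallConn2}; and ``bounding the per-step loss against $k$'' still yields $r$ losses of $k(1+g)$ each, which again exhausts the budget.

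The missing idea is to bound the chromatic number of the \emph{union} of all the extracted subgraphs by a \emph{single} application of the definition of $g_{\ref{c:SmallConn}}$, rather than one application per subgraph. The paper takes a maximal collection $H_1,\dots,H_s$ with $s<r$ and observes that $G_1:=G[\bigcup_i V(H_i)]$ has at most $s\cdot C_{\ref{t:SmallConn2}}^2 t\log^3 t\le C_{\ref{t:SmallConn2}}^2 t\log^4 t$ vertices --- this is exactly why the definition of $g_{\ref{c:SmallConn}}$ allows $\log^4 t$ rather than $\log^3 t$ --- so $\chi(G_1)\le t\cdot g_{\ref{c:SmallConn}}(G,t)$, a one-time loss. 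Maximality plus Theorem~\ref{t:SmallConn2} forces every subgraph of $G_2:=G\setminus V(G_1)$ to have density below $C_{\ref{t:SmallConn2}}k$, whence $\chi(G_2)\le 3C_{\ref{t:SmallConn2}}k$, and $\chi(G)\le\chi(G_1)+\chi(G_2)$ contradicts the hypothesis. If you want to keep your iterative framing, the correct invariant is $\chi(G_i)\ge\chi(G)-\chi\bigl(G[\bigcup_{j\le i}V(H_j)]\bigr)\ge\chi(G)-t\cdot g_{\ref{c:SmallConn}}(G,t)$, not a telescoping sum of the individual $\chi(H_j)$.
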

\begin{proof}
Suppose not. Let $H_1, \ldots, H_s$ be a maximal collection of vertex-disjoint $k$-connected subgraphs such that $\vert(H_i) \le C_{\ref{t:SmallConn2}}^2\cdot t \cdot \log^3 t$ for every $i\in [s]$. Since the lemma does not hold, $s < r$. Let $G_1 := G[\bigcup_{i=1}^{s} V(H_i)]$ and $G_2 := G\setminus V(G_1)$.

By the maximality of $H_1,\ldots, H_s$ and Theorem~\ref{t:SmallConn}, it follows that $\d(G_2') < C_{\ref{t:SmallConn2}}\cdot k$ for every subgraph $G_2'$ of $G_2$. Thus by greedy coloring, we have that 
$$\chi(G_2) \le 2\cdot \max_{G_2'\subseteq G_2} \d(G_2') + 1 \le 3 \cdot C_{\ref{t:SmallConn2}}\cdot k.$$ 
On the other hand, 
$$\vert(G_1) = \sum_{i=1}^{s} \vert(H_i) \le s \cdot C_{\ref{t:SmallConn2}}^2\cdot t \cdot \log^3 t \le C_{\ref{t:SmallConn2}}^2\cdot t \cdot \log^4 t.$$
Hence by definition, 
$$\chi(G_1) \le t\cdot g(G,t).$$
Thus
$$\chi(G) \le \chi(G_1) + \chi(G_2) \le 3 \cdot C_{\ref{t:SmallConn2}}\cdot k + t\cdot g(G,t),$$
a contradiction.
\end{proof}

Next we need a lemma that shows a connected graph with a prescribed set of terminals has an induced connected subgraph containing the terminals which has small chromatic number except for a set that has linear size in the terminals (namely, it has chromatic number two since it is a disjoint union of paths). This is useful when we need to reoptimize the minor after each stage of the sequential construction to ensure we do not lose too much chromatic number overall.

\begin{lem}\label{lem:Steiner}
If $G$ is a connected graph and $S\subseteq V(G)$ with $S\ne \emptyset$, then there exists an induced connected subgraph $H$ of $G$ and a subset $S'\subseteq V(H)$ such that $S\subseteq S'$, $|S'| \le 3|S|$ and $\chi(H\setminus S') \le 2$. 
\end{lem}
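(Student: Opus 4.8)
The natural approach is to take $T$ to be a minimal connected subgraph of $G$ containing $S$ — that is, a Steiner tree for $S$ — and to control its structure. The key structural fact about such a minimal tree $T$ is that every leaf of $T$ lies in $S$, and more usefully, if we look at the set $B$ of \emph{branch vertices} of $T$ (those of degree at least $3$ in $T$) together with $S$ itself, then $T$ decomposes into internally disjoint paths whose endpoints lie in $S\cup B$ and whose interiors avoid $S\cup B$. A standard counting argument on trees shows $|B|\le |L|-1 \le |S|-1$ where $L$ is the leaf set, and the number of such paths is $O(|S|)$. So the first step is to invoke this Steiner-tree structure and set $S' := S \cup B$, giving $|S'|\le 2|S|-1$.

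Then I would take $H$ to be the subgraph of $G$ induced on $V(T)$ (so $H$ is induced and connected since $T\subseteq H$). The issue is that $\chi(H\setminus S')$ need not be at most $2$, because $G$ may contain many chords among the internal vertices of the paths of $T$ — the tree $T$ is a disjoint union of paths after removing $S'$, but $H[V(T)\setminus S']$ is only a subgraph of $G$ on those vertices and can have arbitrarily large chromatic number. To fix this, I would not use the induced subgraph on $V(T)$ but rather keep $H$ as an appropriate minor-like contraction, or — cleaner — redefine $S'$ to also absorb enough vertices so that what remains is genuinely a union of paths in $G$. The honest way: let $H$ be the graph with vertex set $V(T)$ and edge set exactly $E(T)$ — but the lemma demands $H$ be an \emph{induced} subgraph of $G$, so this is not allowed. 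The resolution must instead be: contract nothing, but choose $T$ and then note that a disjoint union of paths (the components of $T - S'$) has chromatic number $2$, and we only need $\chi(H\setminus S')\le 2$ where $H\setminus S'$ is allowed to inherit all of $G$'s edges — so in fact one should take $H$ to be an induced subgraph whose vertex set is chosen so that $G[V(H)\setminus S']$ has no edges beyond a union of paths.

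So the real plan is: take the Steiner tree $T$; let $B$ be its branch vertices; then \emph{subdivide the bookkeeping} by repeatedly pulling into $S'$ any vertex whose removal would otherwise leave a chord — but to keep $|S'|$ linear in $|S|$, one argues that the paths of $T$ between consecutive elements of $S\cup B$, when we additionally take only those $T$-edges, form induced paths in $G$ after we delete at most one extra vertex per path (or we simply observe that within each $T$-path we may keep only every other vertex... ). The cleanest correct version: let $T$ be a minimal connected subgraph containing $S$; let $S'$ be $S$ together with all branch vertices of $T$ \emph{together with} one endpoint-neighbor adjustment, so $|S'|\le 3|S|$; then let $H := G[V(T)]$ and observe that $T - S'$ is a disjoint union of paths, hence $2$-colorable, and crucially that $G[V(T)\setminus S']$ \emph{equals} $T - S'$ because any extra edge of $G$ between two non-consecutive vertices of a $T$-path, or between two different $T$-paths, would contradict minimality of $T$ (such an edge would let us delete a vertex of $T$ and stay connected through $S$). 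This last observation — that minimality of the Steiner tree forces $G[V(T)\setminus S']$ to have no chords and no cross-edges, hence to be exactly a union of paths — is the heart of the argument.

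\textbf{Main obstacle.} The delicate point is bounding $|S'|$ by $3|S|$ while ensuring $G[V(H)\setminus S']$ truly has chromatic number $\le 2$: one needs that after removing $S'$ from $V(T)$, not only is $T-S'$ a union of paths but $G$ adds no edges among those vertices. A chord within a single $T$-path between vertices $u,v$ with $u$ closer to one end: minimality of $T$ says we cannot delete the $T$-edges strictly between $u$ and $v$ and reconnect via the chord unless doing so disconnects some $S$-vertex — but $S$-vertices on that segment are exactly the ones we put into $S'$, so if the open segment between $u$ and $v$ contains no vertex of $S'$ it contains no vertex of $S$ and no branch vertex, so it is an induced sub-path of $T$ with all internal vertices of $T$-degree $2$, and the chord $uv$ would contradict minimality. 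Cross-edges between two distinct $T$-paths are handled the same way using that branch vertices are in $S'$. The only remaining slack — going from $2|S|-1$ to $3|S|$ — absorbs the need to also include, for each component of $T - (S\cup B)$ that is a path both of whose endpoints' $T$-neighbors... in fact $2|S|$ suffices, but stating $3|S|$ gives comfortable room, so I would just verify the branch-vertex count $|B|\le|S|-1$ via the handshake inequality $\sum_{v}( \deg_T(v)-2) = -2$ on the tree $T$ restricted to its non-leaf behavior, conclude $|S'|=|S\cup B|\le 2|S|-1\le 3|S|$, and be done.
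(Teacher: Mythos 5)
Your central claim --- that vertex-minimality of the Steiner tree $T$ forces $G[V(T)\setminus S']$ to equal $T-S'$, with no chords and no cross-edges --- is false, and this is a genuine gap rather than a fixable slip. Minimality does rule out chords inside a single component of $T-S'$ (there the $T$-path between the two ends of the chord has an internal vertex of degree $2$ not in $S$, which can be deleted after adding the chord). But a cross-edge $uv$ between two \emph{different} components survives whenever every internal vertex of the $T$-path from $u$ to $v$ lies in $S\cup B$: adding $uv$ creates a cycle whose only deletable vertices are terminals or branch vertices needed for connectivity, so no smaller connected subgraph containing $S$ results. Such surviving cross-edges can assemble into an odd cycle. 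Concretely, take $S=\{a_1,\dots,a_5,b_1,\dots,b_5\}$ and let $G$ consist of the path $b_1b_2b_3b_4b_5$, pendant paths $b_i p_i a_i$ for each $i$, and the $5$-cycle $p_1p_2p_3p_4p_5p_1$. Every connected subgraph containing $S$ must contain every $p_i$ (it is the unique neighbor of $a_i$), so the tree $T$ consisting of all edges except the cycle edges $p_ip_{i+1}$ is both minimal and vertex-minimum; its branch vertices are $b_2,b_3,b_4\in S$, so $S'=S$, and $G[V(T)\setminus S']=G[\{p_1,\dots,p_5\}]$ is a $5$-cycle with chromatic number $3$. So your argument, as written, does not prove the lemma; one would at least need a further rule for selecting among minimal trees (in this example a different spanning tree of the same vertex set happens to work), and you give no such rule or argument.

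For comparison, the paper proves the lemma by a short induction on $|S|$ that sidesteps this issue entirely: given the subgraph $H_1$ and set $S_1'$ built for $S\setminus\{v\}$, attach $v$ by a \emph{shortest} path $P$ to $V(H_1)$, and add to $S'$ the three vertices $v$, the attachment point $u\in V(H_1)$, and $u$'s neighbor $w$ on $P$ (hence the factor $3$). Because $P$ is shortest, $P\setminus\{u,v,w\}$ is an induced path and none of its vertices has a neighbor in $V(H_1)$, so the induced subgraph on the new vertex set minus $S'$ is the disjoint union of the old one and an induced path. The local use of shortest paths is exactly what delivers, for free, the ``no chords, no cross-edges'' property that a global minimum Steiner tree fails to provide.
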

\begin{proof}
We proceed by induction on $|S|$. First suppose that $|S|=1$. Let $v\in S$. Then $H=\{v\}$ and $S'=S$ are as desired. So we may assume that $|S|\ge 2$. 

Let $v\in S$ and $S_1 := S\setminus \{v\}$. By induction, there exists an induced connected subgraph $H_1$ of $G$ and $S_1'\subseteq V(G)$ such that $S_1\subseteq S_1' \subseteq V(H_1)$, $|S_1'| \le 3|S_1|$ and $\chi(H_1\setminus S_1') \le 2$.

If $v\in V(H_1)$, then $H_1$ and $S' = S_1'\cup \{v\}$ are as desired. So we may assume that $v\notin V(H_1)$. Let $P$ be a shortest path from $v$ to $V(H_1)$. Let $H := G[V(H_1)\cup V(P)]$. Let $u$ be the end of $P$ in $V(H_1)$ and let $w$ be the neighbor of $u$ in $P$. (Note that $v$ may equal $w$.) Let $S' = S_1' \cup \{u,v,w\}$. 

Now $|S'| \le |S_1'|+3 \le 3|S_1| + 3 = 3|S|$. Moreover, $H$ is an induced connected subgraph of $G$ and $S \subseteq S' \subseteq V(H)$. Let $P':=P\setminus \{u,v,w\}$. Since $P$ is shortest, we have that $P'$ is an induced path in $G$ and hence $\chi(G[V(P')])\le 2$. Furthermore, since $P$ is shortest, no vertex in $P'$ has a neighbor in $V(H_1)$. Hence $\chi(H\setminus S') = \max\{\chi(H_1\setminus S'), \chi(P')\} \le 2$. Thus $H$ and $S'$ are as desired. 
\end{proof}

\subsection{Proof of Lemma~\ref{lem:inseparable2}}\label{s:inseparable2}

We prove a stronger inductive form of Lemma~\ref{lem:inseparable2} as follows. The following two concepts are crucial to the proof of Lemma~\ref{lem:inseparable2}.

\begin{definition}
Let $\mc{A}=\{A_1,\ldots,A_s\}$ be a $K_{s}$ model in a graph $G$. We say a subgraph $H$ of $G$ is \emph{tangent} to $\mc{A}$ if $|V(H)\cap V(A_i)| = 1$ for every $i\in [s]$.

We say a subset $S$ of $V(G)$ is a \emph{core} of $\mc{A}$ if for every distinct $i,j\in [s]$, there exists an edge with one end in $V(A_i)\cap S$ and the other end in $V(A_j)\cap S$.
\end{definition}

We are now ready to state and prove our stronger inductive form of Lemma~\ref{lem:inseparable2}.

\begin{lem}\label{lem:inseparable3}
There exists an integer $C=C_{\ref{lem:inseparable3}} \ge 1$ such that the following holds: Let $t\ge 3$ be an integer and let $T\in[0, \left\lceil \sqrt{\log t} \right\rceil]$ be an integer. Let $G$ be a graph and let 
$$g_{\ref{lem:inseparable3}}(G,t) := \max_{H\subseteq G} \left\{ \frac{\chi(H)}{t}: \vert(H)\le C^2\cdot  t\cdot  \log^4 t,~H \text{ is $K_t$-minor-free }\right\}.$$  If $G$ is $C^2\cdot t\cdot(1+g(G,t))$-chromatic-inseparable and $\chi(G)\ge 2\cdot C^2\cdot t\cdot (1+g(G,t))$, then $G$ contains a $K_t$ minor or $G$ contains both of the following:
\begin{itemize}
\item a $K_{s}$ model $\mc{A}=\{A_1,\ldots,A_s\}$ with a core $S$ where $s = T \cdot \lceil \frac{t}{\sqrt{\log t}}\rceil$ and $|S| \le 2\cdot T^2 \cdot C_{\ref{t:SmallConn2}}^2 \cdot t \cdot \log^3 t$, and
\item a $C \cdot t$-connected subgraph $H$ such that $\chi(H)\ge \chi(G) - \frac{C^2}{2}\cdot t\cdot (1+g(G,t))$, $H$ is tangent to $\mc{A}$, and $V(H)\cap V(\mathcal{A}) \subseteq S$.
\end{itemize}
\end{lem}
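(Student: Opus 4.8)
The plan is to prove Lemma~\ref{lem:inseparable3} by induction on $T$. The base case $T=0$ is trivial: take $\mc{A}=\emptyset$, $S=\emptyset$, and for $H$ apply Theorem~\ref{t:LargeChi} (with $k = Ct$) to the assumption $\chi(G)\ge 2C^2 t(1+g(G,t))$ to extract a $Ct$-connected subgraph $H$ losing only $6Ct \le \frac{C^2}{2}t(1+g(G,t))$ in chromatic number; the conditions ``$H$ tangent to $\mc{A}$'' and ``$V(H)\cap V(\mc{A})\subseteq S$'' are vacuous. For the inductive step, assume the statement for $T-1$ and suppose $G$ has no $K_t$ minor (else we are done). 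Apply the inductive hypothesis to get a $K_{s'}$ model $\mc{A}'$ with core $S'$, where $s' = (T-1)\lceil t/\sqrt{\log t}\rceil$, together with a $Ct$-connected subgraph $H'$ of high chromatic number tangent to $\mc{A}'$ with $V(H')\cap V(\mc{A}')\subseteq S'$. The goal is to enlarge $\mc{A}'$ by $\lceil t/\sqrt{\log t}\rceil$ more branch sets using the ``workspace'' $H'$, then re-establish a high-chromatic highly-connected subgraph disjoint enough from the new model to continue.

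The key step is the one-stage extension inside $H'$. Since $\chi(H')$ is still $\ge 2C^2 t(1+g(G,t)) - O(T \cdot C^2 t)$ — and here we use $T\le \lceil\sqrt{\log t}\rceil$ together with the definition of $C$ to keep this bounded below by, say, $C^2 t(1+g(G,t))$ — we may apply Corollary~\ref{c:SmallConn} to $H'$ (or to a suitable subgraph) to obtain many vertex-disjoint $Ct$-connected subgraphs of size $O(t\log^3 t)$ inside $H'$. Because $G$ (hence $H'$) is $C^2 t(1+g(G,t))$-chromatic-inseparable, after removing the bounded-size pieces we use up building the new branch sets, the chromatic number of what remains of $H'$ is still large: the inseparability prevents the chromatic number from being ``split off'' into the part we consumed. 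Concretely, I would carve out of $H'$ one highly-connected subgraph $D$ in which to build a $K_{\lceil t/\sqrt{\log t}\rceil, t}$ minor (via Theorem~\ref{t:density} / the rooted-minor machinery of Lemma~\ref{l:rooted3} and Lemma~\ref{l:woven}), using $\lceil t/\sqrt{\log t}\rceil$ of the branch sets as the new parts $A_{s'+1},\dots,A_s$ and the other $t$ as ``hub'' vertices that are then linked — via Lemma~\ref{lem:MengerVariant} (the redundancy Menger variant) and the linkage/woven toolkit — both to the old core $S'$ (to realize all edges between old and new parts of the model) and forward into a fresh piece of $H'$ that will serve as the next workspace. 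The $K_{\lceil t/\sqrt{\log t}\rceil,t}$ minor ensures each new part is adjacent to enough hub vertices to be linked to all $s'$ old parts and to all $\lceil t/\sqrt{\log t}\rceil - 1$ other new parts; this is where the $t/\sqrt{\log t}$ threshold comes from, since a $K_{\lceil t/\sqrt{\log t}\rceil,t}$ minor is the densest bipartite-minor one can guarantee cheaply.

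After the extension, I would define the new core $S$ by taking $S'$ together with the (bounded-size) vertex sets of the linking paths and the realized edges among the new parts; the bound $|S| \le 2T^2 C_{\ref{t:SmallConn2}}^2 t\log^3 t$ follows since each of the $T$ stages adds $O(T)$ connected pieces each of size $O(t\log^3 t)$ (the factor $T$ inside comes from needing to re-link across all previous parts). For the new workspace subgraph $H$, I would apply Lemma~\ref{lem:Steiner} to prune the used-up linkage vertices down to a disjoint union of short paths plus $O(T\cdot t)$ extra terminals, so that deleting them from $H'$ costs only $O(t\log t)$-ish chromatic number per stage — small enough that, summed over $\le \sqrt{\log t}$ stages and absorbed into the $\frac{C^2}{2}t(1+g(G,t))$ budget by choosing $C$ large, the leftover still has chromatic number $\ge \chi(G) - \frac{C^2}{2}t(1+g(G,t))$ — and then re-apply Theorem~\ref{t:LargeChi} to restore $Ct$-connectivity. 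Finally, if at any stage $s$ reaches (or the construction would let us push past) the point where $s \ge t$, or more precisely once $T = \lceil\sqrt{\log t}\rceil$ so $s \ge t$, then $\mc{A}$ together with its core is (after one more application of the woven/rooted-minor tools inside $H$) a $K_t$ minor, giving the first alternative; this is how Lemma~\ref{lem:inseparable2} is recovered as the $T=\lceil\sqrt{\log t}\rceil$ case. The main obstacle will be the bookkeeping that keeps the chromatic number of the workspace from degrading too fast across all $\sqrt{\log t}$ stages while simultaneously guaranteeing that the linkages needed to attach the new parts to \emph{all} previously built parts can be routed disjointly — this is exactly why the redundancy version of Menger (Lemma~\ref{lem:MengerVariant}) and the quantitative control from chromatic-inseparability are both essential, and getting the constants to close the induction is the delicate part.
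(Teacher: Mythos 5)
Your overall architecture matches the paper's proof of Lemma~\ref{lem:inseparable3} almost exactly: induction on $T$, a one-stage extension of the model inside the workspace $H'$ using Corollary~\ref{c:SmallConn} to extract small highly-connected pieces, Lemma~\ref{lem:MengerVariant} plus the woven/knit toolkit to route the linkages, Lemma~\ref{lem:Steiner} to prune the branch sets, and Theorem~\ref{t:LargeChi} to restore connectivity. However, there is one genuine gap, and it concerns precisely the clause of the conclusion that makes the induction close: that the new subgraph $H$ is \emph{tangent} to the enlarged model $\mc{A}$ with $V(H)\cap V(\mc{A})\subseteq S$. You invoke chromatic-inseparability to argue that ``the chromatic number of what remains of $H'$ is still large,'' but that step needs no inseparability at all: the consumed pieces are small (so have chromatic number at most $t\cdot g(G,t)$ by definition of $g$) or are unions of induced paths, so subadditivity of $\chi$ already gives the lower bound. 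Where inseparability is actually indispensable is at the very end. After pruning and re-applying Theorem~\ref{t:LargeChi} to $G\setminus V(\mc{A})$, the high-chromatic $Ct$-connected subgraph you recover could a priori live anywhere in $G$ and be completely disjoint from the model, in which case it is not tangent to $\mc{A}$ and cannot serve as the next workspace. The paper resolves this by first carving out, \emph{before} constructing the model, a high-chromatic highly-connected subgraph $H_3$ inside the remainder of $H'$, arranging the construction so that every branch set of $\mc{A}$ meets $H_3$ in exactly one vertex (tangency), and only then pruning; the final subgraph $H_5$ obtained from Theorem~\ref{t:LargeChi} is forced by chromatic-inseparability of $G$ to share at least $Ct$ vertices with $H_3$ (otherwise $H_5\setminus V(H_3)$ and $H_3$ would be vertex-disjoint subgraphs each of chromatic number at least $\chi(G)-C^2t(1+g(G,t))$), so $H:=H_3\cup H_5$ is $Ct$-connected, high-chromatic, and tangent. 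Your phrase ``linked forward into a fresh piece of $H'$ that will serve as the next workspace'' gestures at this but does not supply the mechanism; without it the inductive statement cannot be re-established.

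Two smaller imprecisions: the parenthetical ``(hence $H'$)'' is not justified --- inseparability is a property measured against $\chi(G)$, and the paper only ever invokes it for $G$ itself, never for the subgraph $H'$; and at $T=\lceil\sqrt{\log t}\rceil$ no further application of the woven/rooted-minor tools is needed, since $s=T\cdot\lceil t/\sqrt{\log t}\rceil\ge t$ means the $K_s$ model is already a $K_t$ minor.
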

\begin{proof}
We show that $C_{\ref{lem:inseparable3}} := 24\cdot C_{\ref{t:SmallConn2}} \cdot C_{\ref{t:linked}} \cdot C_{\ref{t:knitted}} \cdot C_{\ref{l:rooted3}}$ suffices. Suppose not. Let $t$ be an integer where there exists a counterexample for some $T$ and $G$ as in the hypotheses of the lemma. Let $T$ and $G$ be a counterexample for that value $t$ such that $T$ is minimized.  Note that the lemma holds for $T=0$ trivially and hence we may assume $T>0$. 

Let $x := \left\lceil \frac{t}{\sqrt{\log t}} \right\rceil$. Note that $x \le t$ since $t\ge 3$. Let $s':=(T-1)x$ and recall that $s=Tx$. Note that $s'\le (\sqrt{\log t})\left(\frac{t}{\sqrt{\log t}} + 1\right) \le 2t$ since $t\ge 3$ and hence $s = s'+x \le 3t$. 

By the minimality of $T$, there exists a $K_{s'}$ model $\mc{A'}=\{A_1',\ldots,A_{s'}'\}$ with a core $S'$ where $|S'| \le 2\cdot (T-1)^2\cdot  C_{\ref{t:SmallConn2}}^2 \cdot t \cdot \log^3 t$, and a $k$-connected subgraph $H'$ such that $\chi(H')\ge \chi(G) -  \frac{C_{\ref{lem:inseparable3}}^2}{2}\cdot t\cdot (1+g(G,t))$, $H'$ is tangent to $\mc{A}'$, and $V(H')\cap V(\mathcal{A}')\subseteq S'$.

For each $i\in[s']$, let $w_i$ be the unique vertex in $V(H)\cap A_i'$. Let $H_1 := H'\setminus \{w_i:i\in[s']\}$. Note that $\chi(H_1)\ge \chi(H')-2t$.

Let $k:=C_{\ref{lem:inseparable3}}\cdot t$. Thus 
\begin{align*}
\chi(H_1)&\ge \chi(G) - \frac{C_{\ref{lem:inseparable3}}^2}{2}\cdot t\cdot (1+g(G,t)) - 2t \\
&\ge C_{\ref{lem:inseparable3}}^2\cdot t\cdot (1+g(G,t)) \\
&\ge 4 \cdot C_{\ref{t:SmallConn2}}\cdot k \cdot (1 + g_{\ref{c:SmallConn}}(G,t)),
\end{align*}
since $C_{\ref{lem:inseparable3}}\ge 4 \cdot C_{\ref{t:SmallConn2}}$ and $g_{\ref{c:SmallConn}}(G,t)) \le g(G,t)$. Given this and the fact that $H_1$ has no $K_t$ minor as $G$ does not, we have that $t, k$ and $H_1$ satisfy the hypotheses of Corollary~\ref{c:SmallConn}.

Thus by Corollary~\ref{c:SmallConn}, there exist $T$ vertex-disjoint $k$-connected subgraphs $J_1, \ldots J_{T-1}, D$ in $H_1$ such that $\vert(J_i) \le C_{\ref{t:SmallConn2}}^2 \cdot t \cdot \log^3 t$ for each $i\in[T-1]$ and $\vert(D) \le C_{\ref{t:SmallConn2}}^2 \cdot t \cdot \log^3 t$. For each $i\in[T-1]$, let $X_i=\{w_{s'+2(i-1)x+1},\ldots w_{s'+2ix}\}$ be a subset of $V(J_i)$ of size $2x$. Let $W_1 := \{w_i: i\in [3s']\}$.

Now $|W_1|\le 3s' \le 6t$. As $H'$ is $12t$-connected since $C_{\ref{lem:inseparable3}} \ge 12$, it follows from Menger's Theorem that there exists a set $\mc{Q}_1$ of $W_1-V(D)$ paths in $H'$ with $|\mc{Q}_1|=2|W_1|$ which are vertex-disjoint except in $W_1$ and where each vertex in $W_1$ is the end of exactly two paths in $\mc{Q}_1$. We may assume without loss of generality that each path in $\mc{Q}_1$ is induced. Hence $\chi\left(G[\bigcup_{Q\in \mc{Q}_1} V(Q)]\right)\le 24t$.

Let $J:=V(D)\cup \bigcup_{i=1}^{T-1} V(J_i)$. Note then that 
$$|J|\le T \cdot C_{\ref{t:SmallConn2}}^2 \cdot t \cdot \log^3 t \le C_{\ref{lem:inseparable3}}^2 \cdot t \cdot \log^4 t,$$
since $C_{\ref{lem:inseparable3}}\ge 2C_{\ref{t:SmallConn2}}$ and $T\le 2 \cdot \sqrt{\log t}$. By definition of $g(G,t)$, we have that 
$$\chi(G[J]) \le t\cdot g(G,t).$$
Let $H_2 := G[V(H') \setminus (J \cup \bigcup_{Q\in \mc{Q}_1} V(Q)) ]$. Hence 
\begin{align*}
\chi(H_2) &\ge \chi(H') - 24t - t\cdot g(G,t) \\
&\ge \chi(H')- k\cdot (1+ g(G,t)),
\end{align*}
where the last inequality follows since $C_{\ref{lem:inseparable3}}\ge 24$. Note then that $\chi(H_2)\ge 7k$ since $C_{\ref{lem:inseparable3}}\ge 14$. So by Theorem~\ref{t:LargeChi}, there exists a $k$-connected subgraph $H_3$ of $H_2$ with 
\begin{align*}
\chi(H_3) &\ge \chi(H_2) - 6k \\
&\ge \chi(H') - 7k\cdot (1+g(G,t)) \\
&\ge \chi(G) - C_{\ref{lem:inseparable3}}^2 \cdot t\cdot (1+g(G,t)),
\end{align*}
where the last inequality follows since $\chi(H') \ge \chi(G) - \frac{C_{\ref{lem:inseparable3}}^2}{2}\cdot t\cdot (1+g(G,t))$ and $C_{\ref{lem:inseparable3}} \ge 14$. Since $H'$ is $12t$-connected as $C_{\ref{lem:inseparable3}} \ge 12$, it follows that $H'\setminus W_1$ is $2s$-connected. By Menger's Theorem, there exists a set $\mc{Q}_2$ of vertex-disjoint $V(H_3)-D$ paths in $H'\setminus W_1$ such that $|\mc{Q}_2|=2s$. 

Let $a_1,\ldots, a_{s}, b_1,\ldots b_{s}$ be the vertices in $V(H_3)$ that are the ends of the paths in $\mc{Q}_2$. For each $i\in [s]$, let $C_i:=\{a_i,b_i\}$. Let $H''$ be obtained from $\bigcup_{Q\in \mathcal{Q}_1\cup \mathcal{Q}_2} Q$ by for each $i\in [s]$, identifying the vertices in $C_i$ to a new vertex denoted $c_i$. Let $C:= \left\{ c_i : i \in [s]\right\}$.

Now by Lemma~\ref{lem:MengerVariant} given the existence of $\mc{Q}_1$ and $\mc{Q}_2$, it follows that there exists a set $\mc{P}'_1$ of vertex-disjoint paths $(W_1\cup C) - V(D)$ paths in $H''$ such that $|\mc{P}'_1|=3s'+s$. 

For each $i \in [3s']$, let $P_i$ be the path in $\mc{P}'_1$ containing $w_i$ and let $u_i$ denote the end of $P_i$ in $D$. For each $i\in [s]$, let $P'_{3s'+i}$ be the path in $\mc{P}'_1$ containing $c_i$ and let $u_{3s'+i}$ denote the end of $P'_{3s'+i}$ in $D$.

For each $i\in [s]$, $P'_{3s'+i}$ corresponds to a $V(H_3)-V(D)$ path $P_{3s'+i}$ in $H'$ from a vertex $w_{3s'+i}$ in $C_i$ to $u_{3s'+i}$. Now $\mc{P}:=\{P_i: i\in [3s'+s]\}$ is a $\left\{(w_i,u_i)\right\} _{i\in [3s'+s]}$-linkage in $H'$ such that $V(\mc{P})\cap V(H_3) = \{w_{3s'+i}: i\in [s]\}$ and $V(\mc{P})\cap V(D) = \{u_i: i\in [3s'+s]\}$. 

Note that $2x \sqrt{\log(2x)} \le 4t$ and $3s'+s\le 10 t$. For $i\in [T-1]$, since $J_i$ is $10\cdot C_{\ref{l:rooted3}}\cdot t$-connected as $C_{\ref{lem:inseparable3}} \ge 10\cdot C_{\ref{l:rooted3}}$, we have by Lemma~\ref{l:rooted3} that $J_i$ is $(2x,3s'+s)$-woven. 

\begin{claim}\label{Linkages1}
There exists a $\left\{(w_i,u_i)\right\}_{ i\in[3s'+s]}$-linkage $\mc{P}'$ with $V(\mc{P}')\subseteq V(\mc{P})$ and $K_{2x}$ models $\mc{M}_i$ in $J_i$ rooted at $X_i$ for $i\in[T-1]$ such that $V(\mc{P}')\cap V(\mc{M}_i) = X_i$.
\end{claim}
\begin{proof}
Since each $J_i$ is $(2x,3s'+s)$-woven, the claim follows by iteratively applying Lemma~\ref{l:woven} to each $J_i$.
\end{proof}

By Claim~\ref{Linkages1}, there exist $\mc{P}'$ and $\mc{M}_i$ for each $i\in [T-1]$ as in the statement of the claim. Now let us define a few parameters. For each $i\in [T-1]$ and $j\in [x]$, define $$a_{i,j}:= s'+2(i-1)x + x + j,$$ 
and let $M_{a_{i,j}}$ denote the subgraph in $\mc{M}_i$ containing $w_{a_{i,j}}$. For each $j\in [s']$, define 
$$b_j := s'+j + x \left(\left\lceil \frac{j}{x} \right\rceil-1\right),$$
$$b'_j := 3s'+j,$$
and let $M_{b_j}$ denote the subgraph in $\mc{M}_{\left\lceil \frac{j}{x} \right\rceil}$ containing $w_{b_j}$. For each $j\in[s']$, let $$S_j := \{u_j, u_{b_j},u_{b'_j}\}.$$ For each $j\in [x]$, let 
$$S_{s'+j} := \{ u_{a_{i,j}} : i\in[T-1]\}\cup \{u_{4s'+j}\}.$$

Note that $D$ is $C_{\ref{t:knitted}}\cdot (3s'+s)$-connected as $3s'+s\le 10t$ and $C_{\ref{lem:inseparable3}}\ge 10\cdot C_{\ref{t:knitted}}$. Hence by Theorem~\ref{t:knitted}, we have that $D$ is $(3s'+s,s)$-knit. By definition then, there exist vertex-disjoint connected subgraphs $D_1,\ldots D_{s}$ of $D$ where $S_j\subseteq D_j$ for each $j\in [s]$.

Now we construct a $K_{s}$ model $\mc{A}''=\{A_1'',\ldots,A_s''\}$ in $G$ with a core $S := S'\cup J\cup \{w_{3s'+i}: i\in [s]\}$ such that $H_3$ is tangent to $\mc{A}''$ as follows:
\begin{itemize}
\item For $j\in [s']$, let $A''_j := A'_j \cup P_j \cup D_j \cup P_{b_j} \cup \mc{M}_{b_j} \cup P_{b'_j}$.
\item For $j\in [x]$, let $A''_{s'+j} := P_{4s'+j} \cup D_{s'+j} \cup \bigcup_{i\in[T-1]} \left( P_{a_{i,j}} \cup \mc{M}_{a_{i,j}} \right)$.
\end{itemize}
Note that $s'+x = s$ and
\begin{align*}
|S| &\le |S'| + T \cdot C_{\ref{t:SmallConn2}}^2 \cdot t \cdot \log^3 t + s\\
&\le 2\cdot (T-1)^2 \cdot C_{\ref{t:SmallConn2}}^2 \cdot t \cdot \log^3 t + T \cdot C_{\ref{t:SmallConn2}}^2 \cdot t \cdot \log^3 t + Tx\\
&\le 2\cdot T^2 \cdot C_{\ref{t:SmallConn2}}^2 \cdot t \cdot \log^3 t.
\end{align*} 
Note though that $\chi(H_3)$ may be smaller than $\chi(G)-\frac{C_{\ref{lem:inseparable3}}^2}{2} \cdot t\cdot (1+g(G,t))$, so we are not yet finished with the proof. However recall that $\chi(H_3) \ge \chi(G) - C_{\ref{lem:inseparable3}}^2\cdot t\cdot (1+g(G,t))$.

For each $i\in [s]$, by Lemma~\ref{lem:Steiner} there exists an induced connected subgraph $A_i$ of $G[V(A_i'')]$ and a set $X_i'\subseteq V(A_i)$ such that $S\cap V(A_i'') \subseteq X_i'$, $|X_i'| \le 3 \cdot |S\cap V(A_i'')|$ and $\chi( G[V(A_i)]\setminus X_i' ) \le 2$.

Now $\mc{A} := \{A_i: i\in [s]\}$ is a $K_{s}$ model in $G$ such that $S$ is a core of $\mc{A}$, $H_3$ is tangent to $\mc{A}$, and $V(H_3)\cap V(\mathcal{A})\subseteq S$. Let $A := \bigcup_{i\in [s]}V(A_i)$. Let $X := \bigcup_{i\in [s]} X_i'$. Then
$$\chi(G[A] \setminus X)\le 2s\le 6t.$$ 
Note that $|X| \le 3|S|$ and hence 
$$|X|\le 6\cdot T^2 \cdot C_{\ref{t:SmallConn2}}^2 \cdot t \cdot \log^3 t \le C_{\ref{lem:inseparable3}}^2 \cdot t \cdot \log^4 t,$$
since $C_{\ref{lem:inseparable3}} \ge 6 \cdot C_{\ref{t:SmallConn2}}$. Thus by definition of $g(G,t)$, we have 
$$\chi(G[X]) \le t\cdot g(G,t),$$
and hence
$$\chi(G[A])\le 6t + t\cdot g(G,t)\le  k\cdot (1+g(G,t))$$
since $C_{\ref{lem:inseparable3}}\ge 6$.

Let $H_4 := G \setminus A$. Thus $\chi(H_4) \ge \chi(G)-k\cdot (1+g(G,t))$. Note then that $\chi(H_4)\ge 7k$ since $C_{\ref{lem:inseparable3}}\ge 7$. So by Theorem~\ref{t:LargeChi}, there exists a $k$-connected subgraph $H_5$ of $H_4$ with 
\begin{align*}
\chi (H_5) &\ge \chi(H_4) - 6k \\
&\ge \chi(G)-k\cdot (1+g(G,t)) - 6k \\
&\ge \chi(G) - \frac{C_{\ref{lem:inseparable3}}^2}{2}\cdot t\cdot (1+g(G,t)),
\end{align*}
since $C_{\ref{lem:inseparable3}}\ge 14$. First suppose that $|V(H_5)\cap V(H_3)| < k$. Then 
\begin{align*}
\chi(H_5\setminus V(H_3)) &\ge \chi(G) - 8k\cdot (1+g(G,t)) \\
&\ge \chi(G) - C_{\ref{lem:inseparable3}}^2\cdot t\cdot (1+g(G,t)),
\end{align*} 
since $C_{\ref{lem:inseparable3}}\ge 8$. Thus $H_5\setminus V(H_3)$ and $H_3$ are vertex-disjoint subgraphs of $G$ each with chromatic number at least $\chi(G) - C_{\ref{lem:inseparable3}}^2\cdot t\cdot (1+g(G,t))$, contradicting that $G$ is $C_{\ref{lem:inseparable3}}^2\cdot t\cdot (1+g(G,t))$-chromatic-inseparable. 

So we may assume that $|V(H_5)\cap V(H_3)| \ge k$. But now $H := H_3\cup H_5$ is a $k$-connected subgraph of $G$. Moreover 
$$\chi(H)\ge \chi(H_5) \ge \chi(G) - \frac{C_{\ref{lem:inseparable3}}^2}{2}\cdot t\cdot (1+g(G,t)).$$ 
Finally, $H$ is tangent to $\mc{A}$ and $V(H)\cap V(\mathcal{A}) \subseteq S$. Hence $\mc{A}$, $S$ and $H$ satisfy the conclusions of the lemma for $T$, a contradiction.
\end{proof}

We are now ready to prove Lemma~\ref{lem:inseparable2} as a direct corollary of Lemma~\ref{lem:inseparable3}. We restate Lemma~\ref{lem:inseparable2} for convenience.

\Inseparable*
\begin{proof}[Proof of Lemma~\ref{lem:inseparable2}]
Follows from Lemma~\ref{lem:inseparable2} with $T=\left\lceil \sqrt{\log t} \right\rceil$ and $C_{\ref{lem:inseparable2}}=C_{\ref{lem:inseparable3}}^2$.
\end{proof}

\section{Finding a Minor in General}\label{s:separable}

In this section, we prove Theorem~\ref{t:tech}. Before proceeding to the proof of Theorem~\ref{t:tech}, we first discuss in the next subsection a technical obstacle and how to overcome it.

\subsection{Proof Overview}

A proof for the ``all high chromatic subgraphs are chromatic-separable'' case essentially follows by creating (using the definition of chromatic-separable and Theorem~\ref{t:LargeChi}) a rooted trinary tree where the root corresponds to the original graph and the children of a vertex correspond to three vertex-disjoint highly-connected high-chromatic subgraphs (because if chromatic-separability yields two such graphs, then separating the second into two more yields three). As long as the number of levels of this tree is at least $\Omega(\log \log t)$, it will be possible to recursively build a $K_t$ minor through these levels by splitting the construction into smaller complete minors (i.e. $K_{2t/3}$ minor for one level down) and using Theorem~\ref{t:linked} to link them back together. This works because on the leaves one can use Lemma~\ref{l:rooted3} to make a complete minor of size $K_{t/\log t}$ in each leaf. More formally, one inductively shows that the graphs at level $i$ are $(a_i,b_i)$-woven where $a_i = t\cdot (2/3)^i$ and $b_i = \sum_{j=0}^{i-1} 3a_j = \sum_{j=0}^{i-1} 3t \cdot (2/3)^j \le 9t$.

The obstacle for proving Theorem~\ref{t:tech} then is that too much chromatic number is lost in each recursion step, i.e. $\Omega(t \cdot g_{\ref{lem:inseparable2}}(G,t))$; since there are $\Omega(\log \log t)$ levels, this requires chromatic number $\Omega(t \cdot g_{\ref{lem:inseparable2}}(G,t) \cdot \log\log t)$ which yields a bound of $O(t (\log \log t)^2)$ for the chromatic number of $K_t$-minor-free graphs. The key problem is that we are using the chromatic-inseparable case as a distinct step at each level; that is, we are finding a new $K_t$ minor at a chromatic-inseparable node instead of finding a $K_{a_i}$ minor and linking it appropriately. 

The first step to overcome this obstacle is to use Lemma~\ref{lem:MinorToWoven} which shows that the existence of a complete minor in a highly-connected graph is enough to allow wovenness (i.e.~rooted complete minors). Unfortunately, this is still not enough to overcome the obstacle. If it sufficed to have a $K_{2a_i}$ minor, then via Lemma~\ref{lem:inseparable2} the chromatic number lost in each step would be $\Omega(a_i \cdot g_{\ref{lem:inseparable2}}(G,a_i))$ and so the total chromatic number lost would be $O\left( \sum_{j=0}^{i-1} a_j \cdot g_{\ref{lem:inseparable2}}(G,a_j)\right)$ which given the definition of $f_{\ref{t:tech}}$ (as using maximum over large enough $a$) would be $O(t \cdot f_{\ref{t:tech}}(G,t))$ as desired. The issue is that we need a node at level $i$ to be $(a_i,b_i)$-woven and so we would need say a $K_{2(a_i+2b_i)}$ minor and yet $b_i$ is on the order of $t$.  

Our solution to this issue is in each step to {\bf link before separating}. More specifically, we find a small dense subgraph (or the desired $K_{2(a_i+2b_i)}$ minor), which incurs a loss in chromatic number of $O( (a_i+b_i) \cdot f_{\ref{t:tech}}(G,a_i+b_i))$. We then use the redundancy version of Menger's Theorem (Lemma~\ref{lem:MengerVariant}) to link the remaining high-chromatic highly-connected subgraph as well as the roots and terminals to the small highly-connected subgraph to be linked there. This is essentially one small step of the large sequential process used in the proof of Lemma~\ref{lem:inseparable2}.

Importantly, we do this step {\bf before invoking the chromatic-inseparable case}, that is we only invoke Lemma~\ref{lem:inseparable2} on the remaining high-chromatic subgraph. Hence the child nodes in our recursive tree only have to preserve the linkage among their siblings instead of the linkages of all of their ancestors; that is, we may take $b_i = 3a_{i-1}$ instead of $\sum_{j=0}^{i-1} 3a_j$, which yields a chromatic loss of $O(a_i \cdot g_{\ref{lem:inseparable2}}(G,a_i))$ in step $i$, and hence a total chromatic loss of $O\left(\sum_{i=0}^{\lfloor \log_3 t\rfloor} a_i \cdot g_{\ref{lem:inseparable2}}(G,a_i)\right) = O(t \cdot f_{\ref{t:tech}}(G,t))$ as desired. 

\subsection{Proof of  Theorem~\ref{t:tech}}

We now prove a stronger more technical form of Theorem~\ref{t:tech} as follows.

\begin{thm}\label{t:rooted2} 
There exists an integer $C=C_{\ref{t:rooted2}} \ge 1$ such that the following holds: Let $t\ge 3$ be an integer. Let $G$ be a graph and let 
$$f_{\ref{t:rooted2}}(G,t) := \max_{H\subseteq G} \left\{ \frac{\chi(H)}{a}:~14t \ge a \ge \frac{t}{\sqrt{\log t}},~\vert(H)\le C a \log^4 a,~H \text{ is $K_a$-minor-free }\right\}.$$ 
Suppose that $t$ is a power of three and $a=\left(\frac{2}{3}\right)^i \cdot t$ is a positive integer for some nonnegative integer $i$. If $G$ is a $Ca$-connected graph with 
$$\chi(G) \ge C(t+276 \cdot a \cdot (1+f_{\ref{t:rooted2}}(G,t)) ),$$ 
then $G$ is $(a,3a)$-woven.
\end{thm}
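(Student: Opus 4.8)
\textbf{Proof strategy for Theorem~\ref{t:rooted2}.}

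The plan is to proceed by induction on $i$ (equivalently, on the recursion depth), with the base case $i = \lfloor \log_{3/2} t \rfloor$ — that is, when $a \le t/\sqrt{\log t}$ — handled directly by Lemma~\ref{l:rooted3}. In that base case, $G$ is $Ca$-connected with $Ca = \Omega(\max\{a\sqrt{\log a}, 3a\})$ (since $a \le t/\sqrt{\log t}$ forces $\sqrt{\log a} = O(\sqrt{\log t})$ and hence $a\sqrt{\log a} = O(t) = O(a\sqrt{\log t})$... wait, we need $a\sqrt{\log a} \lesssim Ca$, which holds since $\log a \le \log t$ and at the bottom level $a\sqrt{\log a}$ is comparable to $a$ only if $\log a = O(1)$; more carefully one uses that $C$ is chosen large and the connectivity hypothesis $Ca$-connected already absorbs the $\sqrt{\log a}$ factor because at the last level we only need $Ca \ge C_{\ref{l:rooted3}} \cdot a\sqrt{\log a}$, which may fail — so in fact the base case should be taken at $a$ with $a\sqrt{\log a} \le t$, and Lemma~\ref{l:rooted3} applies since $Ca \ge C_{\ref{l:rooted3}}\max\{a\sqrt{\log a}, 3a\}$ whenever $C \ge 3C_{\ref{l:rooted3}}$ using $a\sqrt{\log a}\le t \le (3/2)^i a$). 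The point is that at the bottom, wovenness comes for free from connectivity.

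For the inductive step, suppose $i < \lfloor\log_{3/2}t\rfloor$ and let $a' = \tfrac{2}{3}a$. Given the three root-sets $R$, $S$, $T$ as in the definition of $(a,3a)$-woven, the plan is: \textbf{(Step 1)} apply Theorem~\ref{t:SmallConn2} (via a corollary like Corollary~\ref{c:SmallConn}, but here we want a \emph{single} small highly-connected subgraph) to find inside $G$ a small $Ca$-connected subgraph $F$ with $\vert(F) = O(a\log^3 a)$ — unless $G$ already has a $K_{2(a+2\cdot 3a)} = K_{14a}$ minor, in which case Lemma~\ref{lem:MinorToWoven} immediately gives that $G$ is $(7a,0)$-woven, and with a little care (padding the linkage into the connectivity) one upgrades this to $(a,3a)$-woven. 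Extracting $F$ costs chromatic number only $O(a(1+f_{\ref{t:rooted2}}(G,a)))$ because any subgraph on $O(a\log^4 a)$ vertices that is $K_a$-minor-free contributes at most $a\cdot f_{\ref{t:rooted2}}$ to the chromatic number, by definition of $f_{\ref{t:rooted2}}$. \textbf{(Step 2)} Use Lemma~\ref{lem:MengerVariant} (the redundancy Menger) to link $R$, $S$, $T$, \emph{and} a leftover high-chromatic subgraph $G^*$ into $F$ via vertex-disjoint paths with the required redundancy, so that it suffices to build a rooted $K_a$ minor and the linkage \emph{inside} $F \cup (\text{paths})$; the leftover $G^*$ still has $\chi(G^*) \ge \chi(G) - O(a(1+f_{\ref{t:rooted2}}(G,a)))$. \textbf{(Step 3)} Now invoke the chromatic-inseparable lemma, Lemma~\ref{lem:inseparable2}, on $G^*$: either $G^*$ is chromatic-separable, in which case we get three vertex-disjoint high-chromatic subgraphs (peeling off one more separation from the two produced), each still $\chi \ge \chi(G) - O(a(1+f))$; restore connectivity to $Ca'$ via Theorem~\ref{t:LargeChi}; apply the induction hypothesis at parameter $a'$ to each of the three to get that each is $(a', 3a')$-woven; then build a rooted $K_a$ minor by building a rooted $K_{a'}$ minor in each of the three and knitting the $3a' \cdot (3/2) = $ (appropriately counted) triples together using Theorem~\ref{t:knitted} inside $F$. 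Or $G^*$ is chromatic-inseparable, in which case Lemma~\ref{lem:inseparable2} directly yields a $K_a$ minor (we have $\chi(G^*)$ large enough since the "$276$" slack in the hypothesis is exactly budgeted to survive all the extractions), and again via Lemma~\ref{lem:MinorToWoven} plus the pre-arranged linkage of Step 2 we obtain the rooted $K_a$ minor together with the required $3a$-linkage.

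\textbf{The main obstacle} will be the bookkeeping of the chromatic-number budget across the recursion: one must verify that the "link before separating" trick genuinely keeps the loss at step $i$ down to $O(a_i(1+f_{\ref{t:rooted2}}(G,a_i)))$ rather than $O(a_i(1+f) + b_i(1+f))$ with $b_i = \Theta(t)$, and that the geometric sum $\sum_i a_i (1+f(G,a_i))$ telescopes to $O(t(1+f_{\ref{t:rooted2}}(G,t)))$ — this requires that every $a_i$ appearing stays in the window $[t/\sqrt{\log t}, 14t]$ on which $f_{\ref{t:rooted2}}$ is defined, which is why the recursion stops at $a \approx t/\sqrt{\log t}$ and why the "$14t$" upper bound (to accommodate $K_{14a}$-minor queries) appears in the definition. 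A secondary technical point is making Steps 2 and 3 commute correctly: the linkage into $F$ must be set up so that invoking chromatic-(in)separability on $G^*$ does not destroy the paths already routed, which forces $G^*$ to be chosen disjoint from $F$ and from all the redundancy paths before Lemma~\ref{lem:inseparable2} is applied — exactly the "ensure the desired linkage of ancestor nodes before invoking chromatic-separability" remark from the outline. Finally, one must check the wovenness is inherited correctly through $F$: this is where Lemma~\ref{l:woven} (weaving a rooted minor inside a woven subgraph while preserving an external linkage) is applied, with $F$ — or rather the three recursively-woven children sitting inside the graph after linking — playing the role of the woven subgraph.
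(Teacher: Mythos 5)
Your strategy matches the paper's proof of Theorem~\ref{t:rooted2} in all essentials: induction on the level with the base case $a\le t/\sqrt{\log t}$ handled by Lemma~\ref{l:rooted3}, the $K_{14a}$-minor escape via Lemma~\ref{lem:MinorToWoven}, extraction of a small highly-connected hub via Theorem~\ref{t:SmallConn2}, linking $R\cup S\cup T$ and the leftover high-chromatic subgraph into that hub with the redundancy Menger \emph{before} invoking Lemma~\ref{lem:inseparable2} to produce three children, restoring connectivity with Theorem~\ref{t:LargeChi}, recursing at $a'=2a/3$, and weaving with Lemma~\ref{l:woven}. The only cosmetic deviations are that the paper joins the three children inside $G_3$ itself --- giving each root two pendant neighbours and a $2a$-linkage via Theorem~\ref{t:linked}, so that each of the $a$ final branch sets is the union of two of the $3a'=2a$ child branch sets and any two automatically meet in a common child --- rather than knitting them through $F$, and that the inseparable alternative is dispatched by contradiction with $K_{14a}$-minor-freeness rather than as a separate constructive case.
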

\begin{proof}
Let $C_{\ref{t:rooted2}} := 35 \cdot C_{\ref{t:SmallConn2}}^2 \cdot C_{\ref{lem:inseparable2}}\cdot C_{\ref{t:linked}} \cdot C_{\ref{l:rooted3}}$.

We proceed by induction on $a$. First suppose that $a \le \frac{t}{\sqrt{\log t}}$. Hence $a \sqrt{\log a} \le t$. Since $C_{\ref{t:rooted2}} \ge 3\cdot C_{\ref{l:rooted3}}$, we have by Lemma~\ref{l:rooted3} that $G$ is $(a,3a)$-woven as desired.

So we assume that $a \ge \frac{t}{\sqrt{\log t}}$. Next suppose that $G$ contains a $K_{14a}$ minor. Since $C_{\ref{t:rooted2}} \ge 7$, we have that $G$ is $7a$-connected. Then by Lemma~\ref{lem:MinorToWoven}, we have that $G$ is $(7a,0)$-woven. Hence $G$ is $(a,3a)$-woven as desired. So we may assume that $G$ is $K_{14a}$-minor-free.

Now we proceed to show that $G$ is $(a,3a)$-woven directly. Let $R=\{r_1,\ldots, r_a\}$, $S=\{s_1,\ldots, s_{3a}\},$ $T=\{t_1,\ldots,t_{3a}\}$ be subsets of $V(G)$ such that if $s_i=t_j$ for some $i,j \in [3a]$, then $i=j$. Since $\delta(G) \ge \kappa(G) \ge 8a = 2|R|+|S|+|T|$ as $C_{\ref{t:rooted2}}\ge 8$, we may assume without loss of generality that $R$, $S$ and $T$ are pairwise disjoint.

For all $i\in [3a]$, let $z_{i} := s_i$. For all $i\in [a]$, let $z_{3a+i} := r_i$. For all $i\in [3a]$, let $z_{4a+i} := t_i$. Let $Z := \{z_i: i\in [7a]\}$. Let $G_1 :=G\setminus Z$. Note that 
$$\chi(G_1)\ge \chi(G)-|Z| \ge \chi(G)-7a \ge C_{\ref{t:rooted2}} \cdot (t+269 \cdot a \cdot (1+f_{\ref{t:rooted2}}(G,t)) ).$$ 

Let $k:= 14 \cdot C_{\ref{t:linked}} \cdot a$. Since $C_{\ref{t:linked}}\ge 1$, we have that $k\ge 14a$. Note that $\d(G_1) \ge \frac{\delta(G_1)}{2} \ge \frac{\delta(G)-|Z|}{2}$. Hence we have that 
$$\d(G_1) \ge \frac{\kappa(G)-7a}{2} \ge C_{\ref{t:SmallConn2}} \cdot k,$$
where we used that $C_{\ref{t:rooted2}}\ge 28 \cdot C_{\ref{t:linked}} \cdot C_{\ref{t:SmallConn2}}+7$. By Theorem~\ref{t:SmallConn2} since $G_1$ is $K_{14a}$-minor-free, there exists a $(14 \cdot C_{\ref{t:linked}}\cdot a)$-connected induced subgraph $H_0$ of $G_1$ with $\vert(H_0)\le C_{\ref{t:SmallConn2}}^2 \cdot 14a \cdot \log^3 (14a)$. Since $C_{\ref{t:rooted2}} \ge C_{\ref{t:SmallConn2}}^2$, we find that
$$\vert(H_0) \le C_{\ref{t:rooted2}} \cdot (14a) \cdot \log^4(14a).$$ 
Since $14a \ge \frac{t}{\sqrt{\log t}}$, we have by definition of $f_{\ref{t:rooted2}}(G,t)$ that 
$$\chi(H_0) \le 14a \cdot f_{\ref{t:rooted2}}(G,t).$$

Since $G$ is $14a$-connected as $C_{\ref{t:rooted2}} \ge 14$, it follows from Menger's Theorem that there exists a set $\mathcal{P}_1$ of $Z-V(H_0)$ paths with $|\mathcal{P}_1|=2|Z|$ which are vertex-disjoint except in $Z$ and where each vertex in $Z$ is the end of exactly two paths in $\mathcal{P}_1$.

We may assume without loss of generality that each path in $\mathcal{P}_1$ is induced. Hence 
$$\chi\left(\bigcup_{P\in \mathcal{P}_1} V(P)\right) \le 2\cdot |\mathcal{P}_1| \le 28a.$$ 
Let $G_2:=G\setminus (Z\cup \bigcup_{P\in\mathcal{P}_1} V(P)\cup V(H_0))$. Thus 
$$\chi(G_2) \ge \chi(G_1)-28a-14a\cdot f_{\ref{t:rooted2}}(G,t) \ge C_{\ref{t:rooted2}} \cdot (t+ 227 \cdot a \cdot (1+f_{\ref{t:rooted2}}(G,t)) ).$$
By Theorem~\ref{t:LargeChi} applied to $G_2$, there exists a $(C_{\ref{t:rooted2}}\cdot a)$-connected subgraph $G_3$ of $G_2$ with 
$$\chi(G_3)\ge \chi(G_2) - 6\cdot C_{\ref{t:rooted2}} \cdot a \ge C_{\ref{t:rooted2}} \cdot (t+221 \cdot a \cdot (1+f_{\ref{t:rooted2}}(G,t)) ).$$ 

First suppose that $G_3$ is $(a,0)$-woven. Since $G_1$ is $2a$-connected, it follows from Menger's Theorem that there exists a set $\mathcal{P}_2$ of pairwise vertex-disjoint $V(G_3)-V(H_0)$ paths in $G_1$ with $|\mathcal{P}_2|=2a$. Given the existence of $\mathcal{P}_1$ and $\mathcal{P}_2$, it follows from Theorem~\ref{lem:MengerVariant} that there exists a set $\mathcal{P}$ of $8a$ pairwise vertex-disjoint $(Z \cup V(G_3))-V(H_0)$ paths where every vertex in $Z$ is an end of some path in $\mathcal{P}$.

For $i\in [7a]$, let $P_i$ be the path in $\mathcal{P}$ such that $z_i$ is an end of $P_i$. Let $\mathcal{P}\setminus \{P_i:i\in [7a]\} = \{P_{7a+1}, \ldots, P_{8a}\}$. For each $i\in \{7a+1,\ldots, 8a\}$, let $v_i$ be the end of $P_i$ in $V(G_3)$. For each $i\in[8a]$, let $u_i \in V(H_0)$ be the other end of $P_i$. By definition of $(a,0)$-woven, there exists a $K_a$ model $\mc{M}'$ in $G_3$ rooted on $\{v_{7a+1},\ldots, v_{8a}\}$. For each $i\in [a]$, let $M_i'$ denote the subgraph in $\mc{M}'$ containing $v_i$.  

Since $H_0$ is $(C_{\ref{t:linked}} \cdot 4a)$-connected, we have by Theorem~\ref{t:linked} that there exists a $\{(u_{i}, u_{4a+i})\}_{i\in[4a]}$-linkage $\mathcal{Q}$ in $H_0$. For each $i\in [a]$, let 
$$M_i := P_{3a+i}\cup Q_{3a+i}\cup P_{7a+i} \cup M_i'.$$ 
Then $\bigcup_{i\in [a]} M_i$ is $K_a$ model $\mc{M}$ rooted at $R$. For each $i\in [3a]$, let 
$$P_i' := P_{i}\cup Q_{i} \cup P_{4a+i}.$$ 
Now $\bigcup_{i\in [3a]} P_i'$ is an $S-T$ linkage disjoint from $\mc{M}$. Since $R,S,T$ were arbitrary, we find that $G$ is $(a,3a)$-woven as desired.

Thus it suffices to show that $G_3$ is $(a,0)$-woven. To that end, let $R'=\{r_1',\ldots, r_a'\}$ be a set of $a$ distinct vertices in $G_3$. 

Since $G_3$ is $(C_{\ref{t:rooted2}}\cdot a)$-connected, we have that $\delta(G_3) \ge C_{\ref{t:rooted2}}\cdot a \ge 3a = 3|R'|$ since $C_{\ref{t:rooted2}}\ge 3$. Thus there exists a subset $S'=\{s_1',\ldots, s_{2a}'\}$ of $V(G_3)\setminus R'$ with $|S'|=2a$, where $s_i'$ and $s_{a+i}'$ are adjacent to $r_i'$ for every $i\in [a]$. Let $G_4 := G_3\setminus (R'\cup S')$. So 
$$\chi(G_4)\ge \chi(G_3)-3a \ge C_{\ref{t:rooted2}} \cdot (t+218 \cdot a \cdot (1+f_{\ref{t:rooted2}}(G,t)) ).$$

Let $m:=C_{\ref{lem:inseparable2}} \cdot 14a \cdot (1+g_{\ref{lem:inseparable2}}(G,14a))$. Since $a\ge \frac{t}{\sqrt{\log t}}$ and $C_{\ref{t:rooted2}}\ge C_{\ref{lem:inseparable2}}$, we find that $g_{\ref{lem:inseparable2}}(G,14a) \le f_{\ref{t:rooted2}}(G,t)$. Since $C_{\ref{t:rooted2}}\ge C_{\ref{lem:inseparable2}}$, we find that
$$m \le C_{\ref{t:rooted2}} \cdot 14a \cdot (1+f_{\ref{t:rooted2}}(G,t)).$$
It follows that $\chi(G_4) \ge 2m$. Then since $g_{\ref{lem:inseparable2}}(G,14a) \ge g_{\ref{lem:inseparable2}}(G_4,14a)$ and $G_4$ is $K_{14a}$-minor-free, we have by Lemma~\ref{lem:inseparable2} that $G_4$ is $m$-chromatic-separable. 

Thus by definition of $m$-chromatic-separable, there exist two vertex-disjoint subgraphs $H_1,H_2$ of $G_4$ such that for each $i\in \{1,2\}$, we have
$$\chi(H_i)\ge \chi(G_4)-m \ge C_{\ref{t:rooted2}} \cdot (t+204 \cdot a \cdot (1+f_{\ref{t:rooted2}}(G,t)) ).$$

Similarly since $\chi(H_1) \ge 2m$ and $H_1$ is $K_{14a}$-minor-free, it also follows from Lemma~\ref{lem:inseparable2} that $H_1$ is $m$-chromatic-separable. Hence by definition of $m$-chromatic-separable, there exist two vertex-disjoint subgraphs $H_{1,1}$, $H_{1,2}$ of $H_1$ such that for each $j\in\{1,2\}$, we have
$$\chi(H_{1,j})\ge \chi(H_1)-m \ge C_{\ref{t:rooted2}} \cdot (t+190 \cdot a \cdot (1+f_{\ref{t:rooted2}}(G,t)) ).$$
Let $J_1 := H_{1,1}$, $J_2 := H_{1,2}$ and $J_3 :=H_{2}$. Note that for each $i\in [3]$, $\chi(J_i)\ge 7 \cdot C_{\ref{t:rooted2}} \cdot a$. Hence for each $i\in [3]$, by Theorem~\ref{t:LargeChi} as $\chi(J_i)\ge 7 \cdot C_{\ref{t:rooted2}} \cdot a$, there exists a $(C_{\ref{t:rooted2}}\cdot a)$-connected subgraph $J'_i$ of $J_i$ with 
$$\chi(J'_i)\ge \chi(J_i) - 6\cdot C_{\ref{t:rooted2}} \cdot a \ge (t+184 \cdot a \cdot (1+f_{\ref{t:rooted2}}(G,t)) ).$$ 
Let $a' := 2a/3$. Thus $276 \cdot a' = 184 \cdot a$. Hence for each $i\in [3]$, we have that  
$$\chi(J'_i)\ge C_{\ref{t:rooted2}}\cdot (t+276\cdot a'\cdot (1+f_{\ref{t:rooted2}}(G,t))).$$
Thus by induction on $a$, we find that $J'_i$ is $(a',3a')$-woven for each $i\in [3]$.

For each $i\in [3]$, since $\vert(J'_i)\ge t$, there exists a subset $T_i = \left\{t_{(i-1)a'+1}', \ldots, t_{ia'}'\right\}$ of $V(J'_i)$. Let $T' :=\{t_i' : i\in [2a]\}$. Note that $|S'|=|T'|=2a$. Since $G_3\setminus R'$ is $(C_{\ref{t:linked}} \cdot (2a))$-connected since $C_{\ref{t:rooted2}} \ge 2 C_{\ref{t:linked}}+1$, we have by Theorem~\ref{t:linked} that there exists an $\left\{(s_i',t_i')\right\}_{i\in [2a]}$-linkage $\mc{P'}$ in $G_3\setminus R'$.

\begin{claim}\label{Linkages2}
There exists an $\left\{(s_i',t_i')\right\}_{i\in [2a]}$-linkage $\mc{P''}$ in $G_3\setminus R'$ and $K_{a'}$ models $\mc{M}_i$ in $J'_i$ rooted at $T_i$ for $i\in [3]$ such that $V(\mc{P''})\cap V(\mc{M}_i) = T_i$.
\end{claim}
\begin{proof}
Since each $J'_i$ is $(a',3a')$-woven, the claim follows by iteratively applying Lemma~\ref{l:woven} to each $J'_i$.
\end{proof}

By Claim~\ref{Linkages2}, there exist $\mc{P''}$ and $K_{a'}$ models $\mc{M}_i$ as in the claim. For each $i\in[2a]$, let $P''_i$ be the path in $\mc{P''}$ containing $\{s_i',t_i'\}$. 

For each $i\in [3]$ and $j\in [a']$, let $M'_{(i-1)a'+j}$ denote the subgraph in $\mc{M}_i$ containing $t'_{(i-1)a'+j}$. For each $i\in [a]$, let 
$$M_i := M'_{i} \cup P''_{i} \cup \{r'_is'_{i}, r'_is'_{a+i}\} \cup P''_{a+i} \cup M'_{a+i}.$$ 
Now $\mc{M} := \{M_i : i\in [a]\}$ is a $K_a$ model in $G_3$ rooted at $R'$. Thus $G_3$ is $(a,0)$-woven as desired.
\end{proof}

We are now ready to prove Theorem~\ref{t:tech}, which we restate for convenience.

\Tech*
\begin{proof}[Proof of Theorem~\ref{t:tech}]
Let $C_{\ref{t:tech}} := 3^9 \cdot C_{\ref{t:rooted2}}$. Suppose not. Let $t' := 3^{\lceil \log_3 t \rceil}$, that is the smallest power of three that is at least as large as $t$. Thus $t'\le 3t$.

By Theorem~\ref{t:LargeChi} with $k=C_{\ref{t:rooted2}}\cdot t'$ as $\chi(G)\ge 21 \cdot C_{\ref{t:rooted2}}\cdot t$, there exists a $(C_{\ref{t:rooted2}}\cdot t')$-connected subgraph $G'$ of $G$ with 
$$\chi(G')\ge \chi(G)-18\cdot C_{\ref{t:rooted2}}\cdot t \ge 828\cdot C_{\ref{t:rooted2}}\cdot t\cdot (1+f_{\ref{t:tech}}(G,t)),$$
since $f_{\ref{t:tech}}(G,t) \ge 0$ and $3^9 \ge 828$. Since $t'\le 3t$, we find that 
$$\chi(G') \ge C_{\ref{t:rooted2}}\cdot t' \cdot (1+276 \cdot f_{\ref{t:tech}}(G,t)).$$

We claim that $f_{\ref{t:tech}}(G,t)\ge f_{\ref{t:rooted2}}(G',t')$. To see this, let $H\subseteq G'$ and $a$ be such that $14t' \ge a \ge \frac{t'}{\sqrt{\log t'}}$, $\vert(H)\le C_{\ref{t:rooted2}} \cdot a \log^4 a$, $H$ is $K_a$-minor-free, and $\frac{\chi (H)}{a} = f_{\ref{t:rooted2}}(G',t')$. First suppose $a\le t$. Then since $C_{\ref{t:rooted2}}\le C_{\ref{t:tech}}$, we find that $\frac{\chi(H)}{a} \le f_{\ref{t:tech}}(G,t)$ as desired. So we assume that $a > t$. Yet $a\le 14t' \le 42t$. Hence 
$$\vert(H)\le C_{\ref{t:rooted2}} \cdot (42t) \log^4 (42t) \le 3^9 \cdot C_{\ref{t:rooted2}} \cdot t \cdot \log^4 t \le C_{\ref{t:tech}} \cdot t \cdot \log^4 t,$$
where we used that $42 \log^4 (42t) \le 3^9 \cdot \log t$ as $t\ge 3$ and that $C_{\ref{t:tech}} \ge 3^9 \cdot C_{\ref{t:rooted2}}$. Since $H$ is $K_t$-minor-free as $G$ is, it follows that $\frac{\chi(H)}{a} \le f_{\ref{t:tech}}(G,t)$ as desired. This proves the claim.

Thus
$$\chi(G') \ge C_{\ref{t:rooted2}}\cdot t' \cdot (1+276\cdot f_{\ref{t:rooted2}}(G',t')).$$ 
It now follows from Lemma~\ref{t:rooted2} with $a=t'$ that $G'$ is $(t',3t')$-woven and hence contains a $K_{t'}$ minor, contradicting that $G$ has no $K_t$ minor.
\end{proof}

\section*{Acknowledgments}

We thank Paul Seymour for bringing to our attention that Lemma~\ref{lem:MinorToWoven} was already known in the literature, namely in Kawarabayashi's paper from 2007~\cite{Kaw07}. We also thank Paul Wollan for pointing out that we could derive Lemma~\ref{l:rooted} directly from Theorem~\ref{t:rootedMinors}, which is Theorem 1.1 in his paper from 2008~\cite{W08}. We would also like to thank Tung Nguyen, David Wood, Jofre Costa, and the anonymous referee for helpful comments.

\bibliographystyle{plain}
\bibliography{lpostle}

\end{document}